\numberwithin{equation}{section}
\newtheorem{theo}{Theorem}
\newtheorem{lem}{Lemma}[section]
\newtheorem{prop}{Proposition}[section]
\newtheorem{rmk}{Remark}[section]
\newcommand{\eps}{\varepsilon}
\newcommand{\z}{\zeta}
\newcommand{\s}{\sigma}
\newcommand{\R}{\mathbb{R}}
\renewcommand{\o}{\overline}
\begin{document}
\title{Linear relaxation to planar Travelling Waves in Inertial Confinement Fusion}
\author{
L. Monsaingeon
\footnote{
Institut de Math\'ematiques de Toulouse, Universit\'e Paul Sabatier, 118 route de Narbonne, 31062 Toulouse, FRANCE
\href{mailto:leonard.monsaingeon@math.univ-toulouse.fr}{\nolinkurl{leonard.monsaingeon@math.univ-toulouse.fr}}
}
}
\date{}
%
%
\maketitle

\begin{abstract}
\noindent
We study linear stability of planar travelling waves for a scalar reaction-diffusion equation with non-linear anisotropic diffusion. The mathematical model is derived from the full thermo-hydrodynamical model describing the process of Inertial Confinement Fusion. We show that solutions of the Cauchy problem with physically relevant initial data become planar exponentially fast with rate $s(\eps',k)>0$, where $\eps'=\frac{T_{min}}{T_{max}}\ll 1$ is a small temperature ratio and $k\gg 1$ the transversal wrinkling wavenumber of perturbations. We rigorously recover in some particular limit $(\eps',k)\rightarrow (0,+\infty)$ a dispersion relation $s(\eps',k)\sim \gamma_0 k^{\alpha}$ previously computed heuristically and numerically in some physical models of Inertial Confinement Fusion.
\end{abstract}

\section{Introduction}
The aim of Inertial Confinement Fusion - ICF in short - is to concentrate extreme temperature and pressure conditions in plasma to a single point in order to trigger a nuclear fusion reaction. The corresponding models couple thermal reaction-diffusion equations with hydrodynamical effects. For such extreme temperature regimes (\mbox{$T\sim 10^7K$}) the heat conductivity coefficient $\lambda$ in the usual diffusion term $\nabla\cdot(\lambda \nabla T)$ cannot be considered as constant, but must be taken of the form
$$
\lambda=\lambda(T)=T^{m-1}
$$
for some fixed conductivity exponent $m>1$ (the value $m=7/2$ is often considered in ICF, corresponding to the \textit{Spitzer electronic heat conductivity}). Scaling at the hot side we always take in the sequel temperature values  $\eps'=T_{min}<T<T_{max}=1$, and consider the limit of small temperature ratio $\eps'\ll 1$ (which is indeed true in ICF, typically $\frac{T_{min}}{T_{max}}\sim 10^{-3}$). An important consequence is here the strong variation of the diffusion length scale: since temperature varies of several orders of magnitude so does the thermal conductivity $\lambda=T^{m-1}$, and the effective diffusion length scales are thus very different in the cold and hot regions. The physically relevant wavenumbers are therefore $1\ll k\ll 1/(\eps')^{m-1}$, corresponding to wavelengths $\frac{1}{k}$ between the shortest and longest diffusive length scales. The second important feature of ICF is the propagation of spherical temperature waves, which concentrate some laser driven input of energy to the center of a fuel target. These waves have a very particular spatial structure consisting in three separate zones: in the most inner one (the target) an initial amount of unburned fuel is at rest $T\approx \eps'$, in the outer one the fuel has been transformed into hot plasma by the laser $T\approx 1$, and in the intermediate region temperature evolves between $\eps'$ and $1$ according to a specific power law.  
\par
When the temperature ratio $\eps'$ is small a very thin boundary layer, called \textit{ablation front}, separates in the wave the cold region from the intermediate one. In the theoretical situation the geometry is purely radial and the ablation front should be a sphere shrinking in time towards its center. This ideal scenario is however unstable due to \textit{Rayleigh-Taylor} and \textit{Darrieus-Landau} effects, which are purely hydrodynamical and concentrate at the ablation front: should the latter slightly deviate from its ideal spherical geometry, wrinkling may amplify, centripetal waves may stop propagating, and the desired fusion reaction may not be triggered. A so-called \textit{transversal mass ablation effect} fortunately tends to stabilize the system. This phenomenon can be roughly explained as follows: whereas the diffusion in the radial direction is responsible for the particular structure of the wave, the orthoradial diffusion across the wrinkled front simply tends to reduce its unstable transversal oscillations.
\par
A self-consistent linear analysis of the full thermo-hydrodynamical model \cite{SanzClavinMasse-LinearDLRTinstab}, performed in the relevant frequency regime for large conductivity exponents $m\rightarrow +\infty$, showed that the transversal mass ablation should lead to a stabilizing contribution
$$
s(\eps',k)\sim \nu k^{1-\frac{1}{m-1}},\qquad \nu=cst>0
$$
in the full dispersion relation ($s>0$ corresponding to an exponential \textit{damping} rate). It was suggested in \cite{ClavinMasse-instability} that this stabilization mechanism can actually be investigated looking at a much simpler model, namely the purely diffusive relaxation of thermal waves, free from hydrodynamical effects. A first rigorous study of an approximated non-Galilean model \cite{ClavinMasseRoque-relaxation} for finite conductivity exponents led to a very similar dispersion relation in the regime $1\ll k^2\ll 1/(\eps')^{m-1}$, except for a logarithmic correction. In this paper we derive a purely thermal model suitably approximating the full one, and rigorously recover the anticipated self-consistent dispersion relation $s\sim \nu_m k^{1-\frac{1}{m-1}}$ for finite conductivity exponents $m>3$ encompassing the physical case $m=7/2$ (in some frequency regime discussed later and relating $k\gg 1$ to $\eps'\ll 1$).
%
%
\section{Model and contents}
The spherical geometry being difficult to investigate we perform our study for the following planar configuration: $x\in\R$ will denote the infinite longitudinal direction of propagation (mimicking the radial depth of penetration in the target for the real setting) and $y\in\R$ the transversal one. This planar approximation is legitimate in ICF since the relevant wavelengths are much smaller than the radius of the target. Neglecting the transversal velocity $\overrightarrow{V}=(V,0)$ we consider now the following 2 dimensional thermo-hydrodynamical model for $(t,x,y)\in\R^3$
\begin{subequations}
\label{eq:modelelabo}
\begin{align}
\rho\partial_tT-\nabla\cdot(\lambda\nabla T)+\rho V\partial_xT=&\;f(T),
\label{eq:energie}\\
\partial_t\rho+\partial_x(\rho V) = &\; 0,\label{eq:masse}\\
\rho T  = &\; 1,
\label{eq:quasiisobare}\\
T(t,-\infty,y)=\eps' \quad & \quad T(t,+\infty,y)=1.
\label{eq:CLsvariablex}
\end{align}
\end{subequations}
Here $T$ is the reduced temperature of the plasma scaled at the hot side ($T_{min}=\eps'\ll 1 =T_{max}$), $\rho$ its density, $V\in\R$ its longitudinal velocity, $\lambda=\lambda(T)=T^{m-1}$ the reduced non-linear heat conductivity, and $f(T)$ a non-linear reaction term of ignition type modelling input of energy by the laser in the outer region. Equations \eqref{eq:energie}-\eqref{eq:masse} correspond to conservation of energy and mass, and \eqref{eq:quasiisobare} is a quasi-isobaric approximation $p\approx cst$ which is common in ICF. Since we neglected the transversal velocity conservation of momentum is not required here to close the system, which is however Galilean invariant. Boundary conditions \eqref{eq:CLsvariablex} simply state that the medium is at rest at $x=-\infty$ and totally combusted at $x=+\infty$. Temperature waves propagate now in the negative $x$ direction (the hot medium invades the cold one), and the transversal wrinkling of the front will be later taken into account considering a periodicity condition in the $y$ direction.
\par
Considering \eqref{eq:masse} as a Schwarz condition for crossed derivatives we may define $X(t,x,y)$ such that $\partial_t X=-\rho V$ and $\partial_xX=\rho$. More explicitly, this new Lagrangian coordinate $X$ is mass-weighted with respect to the Eulerian one $x$ as
\begin{equation}
X(t,x,y):=\int_0^x\rho(t,z,y)dz-\int_0^t\rho V(\tau,0,y)d\tau.
\label{eq:varlagr}
\end{equation}
One easily checks that for physical $\rho\geq 1/T\geq 1$ \eqref{eq:varlagr} defines a proper change of coordinates $(t,x,y)\rightarrow (t,X,y)$ such that $X(t,\pm\infty,y)=\pm \infty$, and inverting the Jacobian yields the usual characteristics $\frac{\partial x}{\partial t}=V(t,x,y)$ for Lagrangian particles. The ideal spherical geometry corresponds here to planar longitudinal waves $\partial_y=0$: from now on we use the
\begin{equation}
\text{Lagrangian approximation:}\qquad \partial_y X=0.
\label{eq:lagrangian_approx}
\end{equation}
Since coupling of the conservation of mass and energy occurs precisely through this $\partial_yX$ term this is of course a drastic approximation, but however more subtle than neglecting each and every transversal variation $\partial_y=0$ directly in the equations: transversal diffusion will still be effective and indeed induce the stabilization mechanism as explained in the introduction. Throughout the rest of the paper we will work in these Lagrangian coordinates and write $x$ instead of $X$ with a clear abuse of notations, and in the Lagrangian approximation \eqref{eq:lagrangian_approx} conservation of energy \eqref{eq:energie} is uncoupled from the hydrodynamics as
\begin{equation}
\rho\partial_{t}T-\rho\partial_x(\lambda\rho\partial_x T)-\partial_{y}(\lambda\partial_{y}T)=f(T).
\label{eq:energielagr}
\end{equation}
Since $\rho=1/T$ and $\lambda=T^{m-1}$ \eqref{eq:energielagr} is a purely scalar reaction-diffusion equation, in which it is worth noting the clear difference between longitudinal and transversal diffusions.
\par
As previously mentioned, temperature in the wave profile evolves according to a specific power law. More precisely, $\nu=T^{m-1}$ behaves linearly with respect to the Eulerian longitudinal coordinate in this intermediate region: this is very similar to the Porous Media Equation where the pressure unknown evolves linearly at least in the vicinity of the free boundary, see e.g. \cite{CaffarelliVazquezWolanski-lipschitzPME}. However, the Lagrangian coordinates are here mass-weighted and stretched with respect to the Eulerian ones according to \eqref{eq:varlagr}: the suitable variable turns to be
$$\mu:=T^{m-2}$$
which we will show to behave linearly with respect to the Lagrangian coordinate at least in the intermediate region (existence and qualitative properties of the wave solution will be investigated in Section~\ref{section:ondeplane}). Since we are concerned with wave propagation it will also be more convenient to work in the moving frame $x+ct$, in which conservation of energy \eqref{eq:energielagr} and boundary conditions \eqref{eq:CLsvariablex} read in terms of the unknown $\mu=T^{m-2}$ as
\begin{subequations}
\begin{align}
&\partial_t\mu+c\partial_x\mu-\left(\mu\partial_{xx}\mu+\frac{1}{m-2}(\partial_x\mu)^2\right)-\left(\mu^{\frac{m}{m-2}}\partial_{yy}\mu+\frac{2}{m-2}\mu^{\frac{2}{m-2}}(\partial_y\mu)^2\right)=G(\mu)\label{eq:E}\\
&\mu(t,-\infty,y)=\eps,\qquad \mu(t,+\infty,y)=1,\qquad \eps:=(\eps')^{m-2}\ll 1.
\label{eq:CLEinfty}
\end{align}
\label{eq:modele}
\end{subequations}
The right-hand side $G(\mu):=(m-2)\mu f(\mu^{\frac{1}{m-2}})$ corresponds to the initial reaction term $f(T)$ modelling the laser, and is again of ignition type: we will always assume in the following that $G$ is smooth and that there exists a threshold $\theta\in]0,1[$ such that
$$
\begin{array}{rcl}
\mu\in[0,\theta]:&\quad & G\equiv 0,\\
\mu\in]\theta,1[ :&  & G>0,\\
\mu=1 :& & G=0\;\text{ and }G'(1)<0.
\end{array}
\label{eq:cutoffG}
$$
\par
In order to investigate linear stability with respect to wrinkled perturbations $U(t,x,y)$ we linearize in the moving framce at the planar travelling wave $\mu_0(t,x,y)=p(x)$, with an additional $2\pi/k$ periodicity condition in the $y$ direction accounting for the ablation front transversal wrinkling:
\begin{subequations}
\begin{align}
&\partial_t U-\left(p\partial_{xx}U+p^{\frac{m}{m-2}}\partial_{yy}U\right)+\left(c-\frac{2p'}{m-2}\right)\partial_x U - p'' U=\frac{dG}{d\mu}(p)U,\label{eq:linearisation1}\\
&U(t,\pm\infty,y)=0,\\
&U(t,x,y+2\pi/k)=U(t,x,y).
\end{align}
\label{eq:linearisation}
\end{subequations}
Note again the difference between the longitudinal diffusion $p\partial_{xx}$ and the transversal one $p^{\frac{m}{m-2}}\partial_{yy}$ in \eqref{eq:linearisation1}. Expanding now in transversal Fourier modes $U(t,x,y)=u(x)e^{-st+iky}$ leads to
\begin{subequations}
\begin{align}
-pu''+\left(c-\frac{2p'}{m-2}\right)u'+\left(k^2p^{\frac{m}{m-2}}-p''-\frac{dG}{d\mu}(p)\right)u&=\; su,\label{eq:Elinx}\\
u(\pm\infty) & =\; 0,\label{eq:CLlinx}
\end{align}
\label{eq:modelelin}
\end{subequations}
where $'=d/dx$.
\par
Let us stress at this point that the wave speed $c=c_{\eps}>0$ and profile $p=p_{\eps}(x)>0$ are uniquely determined by the small parameter $\eps>0$ (this should be no surprise since the reaction term is of ignition type, see Section~\ref{section:ondeplane}). We will see that the planar wave satisfies $\displaystyle{\lim_{x\to -\infty}\,p(x)}=\eps$ with exponential rate $r=c_{\eps}/\eps$: when $\eps\rightarrow 0$ we have $r\rightarrow +\infty$ and the wave degenerates into a free boundary solution, $p_{\eps}\rightarrow p_0$ and $p_0(x)\equiv 0$ for $x$ sufficiently negative. This is illustrated in Figure~\ref{fig:CVpeps} and is again very similar to the Porous Media Equation scenario. Physically relevant perturbations should therefore not disturb the reference wave ahead of the front, and as a consequence we only investigate perturbations with \textit{maximal decay} $u(-\infty)=0$.
\par
For fixed temperature ratio $\eps>0$ and wavenumber $k>0$ only some particular values of $s=s(\eps,k)$ allow the connection $u(-\infty)=u(+\infty)=0$ with maximal decay on the left side, and since we set $s$ to be the \textit{damping} rate we also want to compute the smallest such $s$ (corresponding to optimal stability). Since the wave profile will satisfy $p(x)\geq \eps>0$, problem \eqref{eq:modelelin} is uniformly elliptic on the line $x\in\R$: finding the minimal $s$ is clearly a principal eigenvalue problem, and signed solutions will play an important role as usually in classical elliptic theory.
\par
Let us recall that the relevant frequency regime in ICF is $1\ll k\ll 1/(\eps')^{m-1}=1/\eps^{\frac{m-1}{m-2}}$; in the whole paper we will always assume the condition
\begin{equation}
1\ll k\ll \frac{1}{\eps^{\frac{1}{m-2}}},
\label{eq:doublelimite}
\end{equation}
which is slightly more restrictive for wavenumbers if $m>2$. For some large $k_0>0$ and small $\eta>0$ let us define the frequency (open) set
\begin{equation}
\mathcal{U}:=\Big{\{}\eps,k>0:\qquad k>k_0\text{ and } k<\frac{\eps^{\eta}}{\eps^{\frac{1}{m-2}}} \Big{\}}.
\label{eq:regime_eta}
\end{equation}
Taking $(\eps,k)\rightarrow(0,+\infty)$ in $\mathcal{U}$ clearly implies regime \eqref{eq:doublelimite}. The parameter $\eta>0$ is needed for technical reasons, but may be chosen as small as desired so that there is no real loss of information when restricting \eqref{eq:doublelimite} to $(\eps,k)\in\mathcal{U}$. Our main result is the following:
\begin{theo}
Fix a reaction term $G$ of ignition type and a conductivity exponent $m>3$: for any $\eta>0$ there exists $k_0>0$ such that, for any $(\eps,k)\in\mathcal{U}$ defined in \eqref{eq:regime_eta}, there exists a principal eigenvalue $s=s(\eps,k)>0$ such that problem \eqref{eq:modelelin} has a positive solution $u(x)$ with maximal decay when $x\to-\infty$. Moreover $s(\eps,k)>0$ is the smallest eigenvalue, there exists no other principal eigenvalue, and the associated eigenspace is one-dimensional. Finally there exists $\gamma_0=\gamma_0(m,G)>0$ such that
\begin{equation}
s(\eps,k)\sim \gamma_0 k^{1-\frac{1}{m-1}}
\label{eq:reldispasymptotvarx}
\end{equation}
when $(\eps,k)\to(0,+\infty)$ in $\mathcal{U}$.
\label{theo:reldisp}
\end{theo}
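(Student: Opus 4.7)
The plan is to first view \eqref{eq:modelelin} as a spectral problem for the uniformly elliptic operator $\mathcal L u := -p u''+(c-2p'/(m-2))u'+(k^2p^{m/(m-2)}-p''-G'(p))u$ on the whole real line: thanks to the wave analysis of Section~\ref{section:ondeplane}, the coefficients are smooth and bounded and $p\geq\eps>0$. Existence of a positive eigenpair would follow by truncating to intervals $[-R,R]$ with Dirichlet conditions, applying Krein--Rutman on each interval to obtain a simple principal pair $(s_R,u_R)$ with $u_R>0$, and passing to the limit $R\to+\infty$. The zeroth-order coefficient $k^2p^{m/(m-2)}-G'(p)$ is bounded below by a positive constant outside a compact set (even at $-\infty$, since $k^2\eps^{m/(m-2)}>0$ and $G'\equiv 0$ there), which gives the uniform exponential decay needed for compactness. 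In the constant-coefficient limit at $-\infty$ the frozen ODE has two positive characteristic rates, and a shooting/phase-plane argument shows that the smallest $s$ obtained by the above procedure is precisely the one saturating \emph{maximal decay}. Uniqueness, minimality and simplicity then follow from Berestycki--Nirenberg--Varadhan sliding arguments applied to the quotient $u'/u$ of two hypothetical principal eigenfunctions.

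\paragraph{Dispersion asymptotics.} The key point is the self-similar structure of the wave profile: in the intermediate region $G(p)\equiv 0$ and $p\approx(m-2)c_\eps(x-x_0)$, so the three dominant terms $pu''$, $k^2p^{m/(m-2)}u$ and $su$ balance at the single scale $\lambda=(c_\eps/k)^{(m-2)/(m-1)}$ for the value of $p$. This immediately yields the claimed dispersion $s\sim c_\eps^{\,m/(m-1)}k^{1-1/(m-1)}$, the prefactor incorporating both the wave speed limit $c_0=\lim_{\eps\to 0}c_\eps>0$ and a universal constant coming from the rescaled problem. After the change of variables $p=\lambda\tilde p$, the linearised operator restricted to the intermediate region converges, as $(\eps,k)\to(0,+\infty)$ in $\mathcal U$, to an $(\eps,k)$-independent degenerate elliptic eigenvalue problem on the half-line $\tilde p\in(0,+\infty)$, whose principal eigenvalue is a universal constant $\tilde\gamma_0>0$, so that $\gamma_0=c_0^{\,m/(m-1)}\tilde\gamma_0$. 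To turn this into matching upper and lower bounds for $s(\eps,k)$, I would use the min--max/Berestycki--Nirenberg--Varadhan characterisation of the principal eigenvalue of $\mathcal L$: an upper bound obtained by plugging suitably cut-off rescaled eigenfunctions of the limit problem into the characterisation, and a lower bound via an Agmon-type localisation showing that any principal eigenfunction must concentrate precisely in the intermediate window $p\sim\lambda$.

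\paragraph{Main obstacle.} The hardest step is making this matched asymptotic expansion rigorous and uniform over $(\eps,k)\in\mathcal U$, which forces one to control two boundary layers. The cold layer near $-\infty$, where $p\approx\eps$ and the wave degenerates into a free-boundary solution as $\eps\to 0$, is the most delicate one: the technical restriction $k<\eps^{\eta-1/(m-2)}$ built into $\mathcal U$ is precisely what ensures $k^2\eps^{m/(m-2)}\ll s(\eps,k)\sim k^{1-1/(m-1)}$, so that the maximal-decay tail remains subdominant and does not contaminate the intermediate regime. In the hot layer near $+\infty$ one has $k^2p^{m/(m-2)}\sim k^2$, and Agmon/WKB barriers must be built to exhibit rapid exponential decay at rate $O(k)$ and match it uniformly to the intermediate profile. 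It is the combination of these sharp a priori estimates (barrier functions, Harnack and Hopf comparisons, energy identities on the transformed operator) with the rescaled universal problem that upgrades the formal scaling into the exact prefactor $\gamma_0$, rather than merely the correct exponent $1-1/(m-1)$.
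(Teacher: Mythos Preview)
Your central gap is the handling of the maximal-decay condition at $-\infty$. In the regime of interest both characteristic exponents there are positive, $0<r^{-}\ll r^{+}\sim c_\eps/\eps$ (see \eqref{eq:r+-}--\eqref{eq:expocaracteristiquemaxzonefroide}), so \emph{every} solution of the asymptotic equation decays at $-\infty$ and the maximal-decay subspace is a genuine codimension-one constraint that has to be imposed by hand. A Dirichlet truncation on $[-R,R]$ does not encode this constraint, and there is no reason the Krein--Rutman limit should inherit the fast rate $r^+$ rather than the generic slow rate $r^-$: your sentence ``a shooting/phase-plane argument shows that the smallest $s$ obtained by the above procedure is precisely the one saturating maximal decay'' is exactly the step that requires a proof, and it is far from clear that it is even true. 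The Berestycki--Nirenberg--Varadhan characterisation you then invoke for the asymptotics inherits the same defect: it captures the generalised principal eigenvalue in the unweighted sense, which in general differs from the maximal-decay one, and the two can only be reconciled after passing to an exponentially weighted space that you never introduce.

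The paper avoids this entirely by a direct shooting/Evans-function construction rather than a Krein--Rutman or variational scheme. The maximal-decay left branch is built \emph{explicitly} in the cold layer (Section~\ref{section:zonefroide}) as an expansion $v_0+\eps v_1+\eps v_2$ in the stretched variable $\xi=x/\eps$, working in weighted spaces $B_w$ designed precisely to exclude the slow rate $r^-$; its exit data at $x=x_\eps$ are computed by hand. The right branch is the one-dimensional stable manifold at $+\infty$. Matching is the vanishing of the Evans function $E(\eps,\delta,\s)$, and the dispersion relation follows not from min--max comparison but from an Implicit Function Theorem at the simple zero $\s_0$ of the limiting Evans function $E(0,0,\cdot)$, once $\mathcal{C}^1$ convergence of both branches (Section~\ref{section:branches_regularity}) and $\partial_\s E(0,0,\s_0)\neq 0$ (resting on algebraic simplicity, Proposition~\ref{prop:multalg=1}) have been established. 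Positivity of the eigenfunction is then recovered \emph{a posteriori} by maximum-principle arguments on three separate zones, rather than being an input from Krein--Rutman.
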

This means that the wave is linearly stable with respect to wrinkled perturbations (with maximal decay) and that any solution of the non-linear Cauchy problem with initial datum $u_0(x)$ decaying fast enough to the left will become planar exponentially fast with rate at least $s(\eps,k)>0$. Note that our result does not tell anything regarding perturbations in the longitudinal direction: invariance of the wave solutions under $x$ shift results as usual in a non-trivial element $dp/dx$ in the kernel of the linearized operator, i-e in a zero eigenvalue. For fixed $\eps>0$ non-linear techniques can be used to establish stability of the wave with respect to almost any such perturbation (see e.g.\cite{RoquejoffreMelletNolenRyzhik-stabilityTW,Roquejoffre-EventualMonotonicityTW}), but one has then to face the degeneracy into free boundary when $\eps\to 0$. The main interest of our result is that it is uniform in $\eps\to 0$, see \eqref{eq:reldispasymptotvarx}.
\par
The paper is organized as follows: in Section~\ref{section:ondeplane} we establish existence of the planar travelling wave and investigate its linear behaviour as well as the degeneracy into free boundary of the ablation front when $\eps\rightarrow 0^+$. Section~\ref{section:zonefroide} is devoted to the construction of solutions with maximal decay at $-\infty$ in the cold region $p=\mathcal{O}(\eps)$ for $s=\gamma k^{1-\frac{1}{m-1}}$ and fixed $\gamma$. This is done expanding the solution $u=u_0+\eps u_1+\eps u_2$ in the suitable scale $\xi=\frac{x}{\eps}$, and we also compute $(u,\frac{du}{dx})$ at the exit of the boundary layer. Section~\ref{section:zonechaude} is a formal limit $(\eps,k)\to(0,+\infty)$ performed in the linear region $\frac{dp}{dx}\approx cst>0$ at scale $\z=k^{1-\frac{1}{m-1}}x$. Due to the presence of the free boundary when $\eps\to 0$ the resulting limiting problem degenerates at $\z=0$, but will nonetheless yield the asymptotic coefficient $\gamma_0$ in \eqref{eq:reldispasymptotvarx} as the principal eigenvalue of some singular Sturm-Liouville problem on the half line $\z>0$. In Section~\ref{section:raccord} we rigorously justify this limit by proving that,  when $(\eps,k)\to(0,+\infty)$ in $\mathcal{U}$, the real physical setting $\eps>0,k<+\infty$ automatically matches the formal asymptotic problem $\eps=0,k=+\infty$ (the matching occurring in the ablation front).
This last section also contains the proof of Theorem~\ref{theo:reldisp}.
\section{Planar travelling wave and boundary layer}
\label{section:ondeplane}
In this section we study the aforementioned travelling wave at which the problem is linearized. Plugging $\mu(t,x,y)=p(x)$ into \eqref{eq:E} leads to
\begin{equation}
-pp''+\left(c-\frac{p'}{m-2}\right)p'=G(p), \quad p(-\infty)=\eps, \quad p(+\infty)=1,
\label{eq:EDOp}
\end{equation}
and in order to eliminate invariance under $x$ shift we impose the additional pinning condition
\begin{equation}
p(0)=\theta.
\label{eq:conditiontranslation}
\end{equation}
\begin{prop}
Let $\eps\in]0,\theta[$:
\begin{enumerate}
 \item
There exists a unique speed $c=c_{\eps}>0$ such that problem \eqref{eq:EDOp}-\eqref{eq:conditiontranslation} admits a solution $p=p_\eps(x)$. This solution is unique and satisfies $0<p_{\eps}'<(m-2)c_{\eps}$.
\item
When $\eps\rightarrow 0^+$ we have that $c_{\eps}\rightarrow c_0>0$ and $p_{\eps}(.)\to p_0(.)$ uniformly on $\R$, where $p_0$ solves
$$
\begin{array}{rcl}
x\in]-\infty,-\frac{\theta}{(m-2)c_0}] & : & p_0(x)=0\\
x\in]-\frac{\theta}{(m-2)c_0},0] & : & p_0(x)=\theta+(m-2)c_0x\\
x\in]0,+\infty] & : & \left\{
\begin{array}{l}
-p_0p_0''+\left(c_0-\frac{p_0'}{m-2}\right)p_0'=G(p_0)\\
p_0(0)=\theta,\hspace{.5cm}p_0(+\infty)=1
\end{array}
\right.
\end{array}
$$
\end{enumerate}
\label{prop:peps->p0}
\end{prop}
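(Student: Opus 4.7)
The plan is to reduce the second-order ODE \eqref{eq:EDOp} to a first-order phase-plane equation for $q(p):=p'$ viewed as a function of $p$. Writing $p''=q\,q'(p)$, \eqref{eq:EDOp} becomes
\[
p\,q\,q'\;=\;c\,q-\frac{q^{2}}{m-2}-G(p),\qquad q(\eps)=q(1)=0,
\]
and I look for a strictly positive trajectory on $(\eps,1)$. On the cold interval $p\in[\eps,\theta]$ the reaction $G$ vanishes and this collapses to the \emph{linear} equation $q'=c/p-q/(p(m-2))$; integrating with $q(\eps)=0$ gives the closed form
\[
q_{\eps}(p;c)=(m-2)\,c\,\bigl(1-(\eps/p)^{1/(m-2)}\bigr),
\]
which is positive, strictly increasing in $p$, and strictly bounded above by $(m-2)c$. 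This fixes the entry data at $p=\theta$ as an explicit function of $(c,\eps)$.

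On the hot interval $[\theta,1]$ I run a shooting argument in $c$, integrating the full nonlinear equation starting from $(\theta,q_\eps(\theta;c))$. For $c$ too small the trajectory hits $q=0$ before reaching $p=1$ since the reaction term dominates, and for $c$ too large it remains strictly positive at $p=1$; continuity together with the standard strict monotone dependence on $c$ (obtained by differentiating the equation in $c$, or directly by a Sturm-type comparison at any hypothetical crossing of two trajectories $q_{c_{1}}<q_{c_{2}}$ at some $p^{\star}>\theta$) then yields a unique $c=c_\eps>0$ for which $q(1;c_\eps)=0$. Reintegrating $p_\eps'=q_{\eps}(p_\eps)$ and normalising by $p_\eps(0)=\theta$ produces the unique profile $p_\eps$. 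The upper bound $p_\eps'<(m-2)c_\eps$ is automatic in the cold zone, and in the hot zone it is obtained by observing that on the line $\{q=(m-2)c_\eps\}$ one has $p\,q\,q'=-G(p)\le 0$, so that this barrier cannot be crossed from below by a trajectory that starts strictly beneath it at $p=\theta$.

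For the limit $\eps\to 0^+$, the closed form above gives $q_\eps(p;c)\to (m-2)c$ locally uniformly on $\{p>0\}$, hence the hot-zone entry datum at $p=\theta$ converges and continuous dependence in the shooting argument of the previous paragraph yields $c_\eps\to c_0>0$, with $c_0$ characterised exactly by the limiting hot-zone boundary value problem stated in the Proposition. Uniform convergence $p_\eps\to p_0$ on $\R$ is then obtained piecewise: on $x\ge 0$ it is direct ODE continuous dependence; on the linear window $x\in[-\theta/((m-2)c_0),0]$, as long as $p_\eps\ge\delta$ the relation $p_\eps'=q_\eps(p_\eps)$ forces $p_\eps'\to (m-2)c_0$ uniformly, and integrating from $p_\eps(0)=\theta$ reproduces the affine profile $\theta+(m-2)c_0 x$; below the linear window the rescaling $\xi=x/\eps$, $v=p_\eps/\eps$ turns the cold-zone equation into the autonomous
\[
\frac{dv}{d\xi}=(m-2)c_\eps\bigl(1-v^{-1/(m-2)}\bigr),\qquad v(-\infty)=1,
\]
so that $p_\eps=O(\eps)\to 0$ there, matching $p_0\equiv 0$.

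The main obstacle is the shooting step: establishing strict monotonicity of the shooting map $c\mapsto q(1;c)$ in a form robust enough to carry the $\eps$-dependent entry data and to pass to the limit $\eps\to 0$ simultaneously. A secondary technical difficulty is to glue the inner boundary-layer and outer linear regimes into a genuinely uniform convergence on the whole line $\R$, for which the explicit form of $q_\eps$ and the scale separation between the boundary-layer thickness $O(\eps)$ and the $O(1)$ linear window are essential.
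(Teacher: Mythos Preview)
Your approach is essentially the paper's: both reduce to the phase-plane variable $U(p)=p'$, integrate explicitly on $[\eps,\theta]$ to obtain the closed form $(m-2)c\bigl(1-(\eps/p)^{1/(m-2)}\bigr)$, and then shoot on the hot interval $[\theta,1]$. The one organizational difference is that the paper shoots with respect to the initial slope $\alpha=p'(0)$ for \emph{fixed} $c$, obtaining a function $\alpha_0(c)$ which is manifestly independent of $\eps$, and then solves the scalar matching equation $(m-2)c\bigl(1-(\eps/\theta)^{1/(m-2)}\bigr)=\alpha_0(c)$; you instead shoot directly in $c$ with the $\eps$-dependent entry datum $q_\eps(\theta;c)$. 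The paper's decoupling buys exactly what you identify as your main obstacle: since $\alpha_0$ does not see $\eps$, uniqueness of $c_\eps$ reduces to the crossing of an increasing affine function of $c$ with the decreasing $\alpha_0(c)$, and the limit $\eps\to 0$ becomes simply $(m-2)c_0=\alpha_0(c_0)$, with monotone convergence of $c_\eps$ immediate. Your route is correct but carries the $\eps$-dependence through the shooting map, which is why you have to work harder at the end.
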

This is a particular case of more general well-known results in reaction-diffusion theory, see e.g. \cite{BeresLarou-qquesaspects,BerettaHulshofPeletier-ODEcoatingFlow}: the proof relies on simple ODE techniques and will only be sketched here for the sake of completeness. The degeneracy into free-boundary $p_{\eps}(x)\to p_0(x)$ is portrayed in Figure~\ref{fig:CVpeps}.
\begin{proof}
Taking advantage of the invariance of \eqref{eq:EDOp} under translations we may use the Sliding Method from \cite{BeresNiren-sliding} and see that any solution must be increasing in $x$, and therefore that $\eps<p<1$. Setting $p'=U(p)>0$, \eqref{eq:EDOp} leads to
$$
p\in]\eps,1[:\qquad -p\frac{dU}{dp}U(p)+\left(c-\frac{U(p)}{m-2}\right)U(p)=G(p)
$$
with $U(\eps)=p'(-\infty)=0$ and $U(1)=p'(+\infty)=0$. For $p\leq\theta$ the reaction term can be omitted and this can be explicitly integrated as
\begin{equation}
p\in]\eps,\theta]:\qquad U(p)=(m-2)c\left[1-\left(\frac{\eps}{p}\right)^{\frac{1}{m-2}}\right],
\label{eq:p'}
\end{equation}
and the pinning condition $p(0)=\theta$ yields 
$$
p'(0)=U(\theta)=(m-2)c\left[1-\left(\frac{\eps}{\theta}\right)^{\frac{1}{m-2}}\right].
$$
For fixed $c>0$ and given $\alpha>0$ the Cauchy problem
$$
x\geq 0:\qquad \left\{
\begin{array}{c}
 -pp''+\left(c-\frac{p'}{m-2}\right)p'=G(p)\\
p(0)=\theta\\
p'(0)=\alpha
\end{array}
\right. 
$$
is independent of $\eps$, and shooting with respect to $\alpha$ there exists a unique $\alpha=\alpha_0(c)>0$ such that the corresponding solution satisfies $p(+\infty)=1$. The solution on $x<0$ therefore matches the one on $x>0$ if and only if $(m-2)c\left(1-\left(\frac{\eps}{\theta}\right)^{\frac{1}{m-2}}\right)=p'(0)=\alpha_0(c)$. One then shows that this fixed point  equation in $c$ has a unique solution $c=c_{\eps}$ ($\alpha_0$ is decreasing in $c$) associated with a unique profile $p=p_{\eps}$, and convergence of $c_{\eps}\to c_0$ and $p_{\eps}\to p_0$ when $\eps\searrow 0$ follows by monotonicity.
\end{proof}
We establish now precise asymptotes on $p_{\eps}\to p_0$ when $\eps\to 0$. Scaling $x=\eps\xi$ and $q(\xi)=\frac{p_{\eps}(\eps \xi)}{\eps}$ \eqref{eq:p'} reads
$$
\left\{\begin{array}{c}
\frac{dq}{d\xi}=\frac{dp}{dx}=(m-2)c\left(1-\frac{1}{q^{\frac{1}{m-2}}}\right)\\
q(-\infty)=1
\end{array} \right. .
$$
This shows that $p_{\eps}$ presents a boundary layer of thickness $\eps$ in which $p=\mathcal{O}(\eps)$ (this is the ablation front), and $p'$ varies from $p'(-\infty)=0$ when $p=\eps$ (thus $q=1$) to $(m-2)c[1+o(1)]>0$ when $p\gg \eps$ (thus $q\gg 1$) as pictured in Figure~\ref{fig:CVpeps}.
\begin{figure}[h!]
\begin{center}
\scalebox{.9}{
\input{./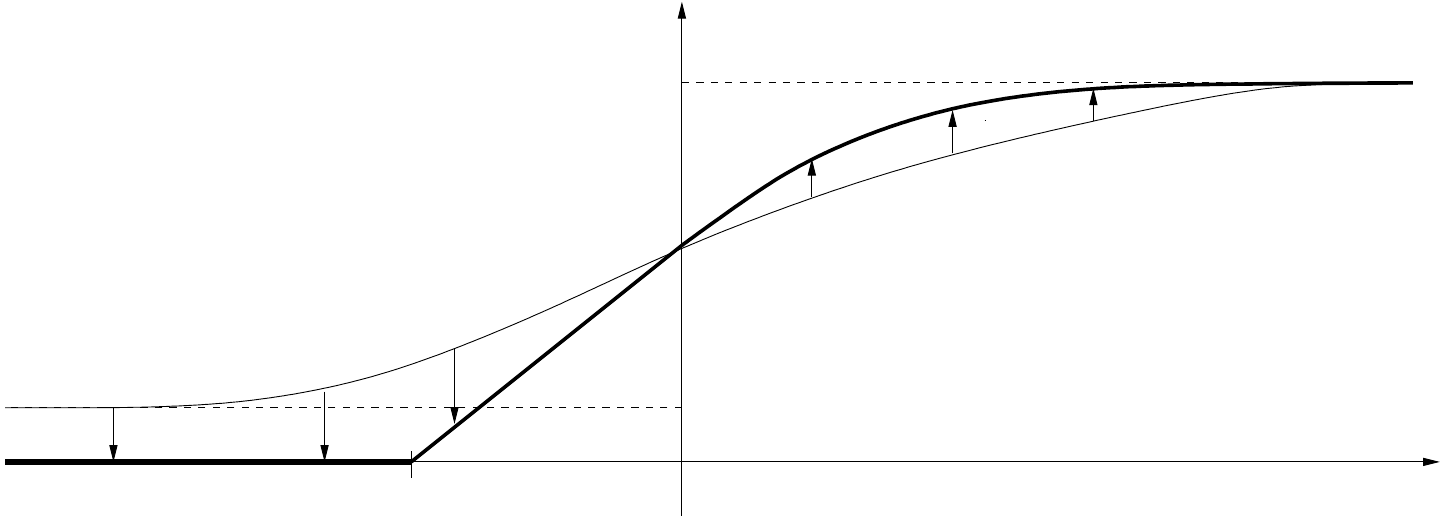_t}
}
\end{center}
\caption{monotonic convergence $p_{\eps}\rightarrow p_0$ and $\eps$-boundary layer.}
\label{fig:CVpeps}
\end{figure}
\\
\par
From now on we choose the origin at the slope discontinuity by setting
\begin{equation}
x_{\theta}:=\frac{\theta}{(m-2)c_0}, \qquad p_0\left(x_{\theta}\right)=\theta,
\label{eq:defxthetatranslationpeps}
\end{equation}
and we are out of the boundary layer as soon as $x\gg \eps$. To the right $p_{\eps}$ is linear when $\eps\to 0$, $p_{\eps}'(x)\underset{\eps\to 0}{\sim}(m-2)c_0$ and $p_{\eps}(x)\underset{\eps\to 0}{\sim} (m-2)c_0x$. We will refer in the following to the set where $\eps\ll p<\theta$ indistinctly as the ``hot zone'' or ``linear zone'', and call ``cold zone'' the set where $p=\mathcal{O}(\eps)$ (Figure \ref{fig:CVpeps} should make this terminology self-explanatory). In addition, plugging \eqref{eq:p'} into \eqref{eq:EDOp} yields an expression of
\begin{equation}
 p_{\eps}''=\frac{1}{p_{\eps}}\left(c-\frac{U(p_{\eps})}{m-2}\right)U(p_{\eps}) =c\eps^{\frac{1}{m-2}}\frac{U(p_{\eps})}{p_{\eps}^{1+\frac{1}{m-2}}} \qquad(p_{\eps}\leq\theta).
\label{eq:p''}
\end{equation}
for any $\eps>0$. As a consequence of \eqref{eq:p'}-\eqref{eq:p''} we obtain the following asymptotes in the linear zone when $\eps\rightarrow 0^+$:
\begin{subequations}
\begin{align}
p_{\eps}(x)\sim &\;(m-2)c_0x\label{eq:asymptotp},\\
p_{\eps}'(x)\sim &\;(m-2)c_0\label{eq:asymptotp'},\\
p_{\eps}''(x)\sim &\;\frac{A}{\eps}\left(\frac{\eps}{x}\right)^{1+\frac{1}{m-2}}\label{eq:asymptotp''},
\end{align}
\label{eq:asymptotvarx}
\end{subequations}
with $A:=\frac{c_0}{[(m-2)c_0]^{\frac{1}{m-2}}}>0$. These only hold far enough from the boundary layer $x\gg\eps$ and as long as $p_{\eps}\leq\theta$ (otherwise the reaction term must be taken into accounted).
\par
Finally, asymptotic analysis of \eqref{eq:EDOp} at $x=-\infty$ (where $p=\eps$ and $p'=0$) easily yields for fixed $\eps>0$ the exponential decay
\begin{equation}
p_{\eps}'(x)\underset{x\rightarrow -\infty}{\sim} e^{c_{\eps}x/\eps},
\label{eq:equivp'-infty}
\end{equation}
and $|p_{\eps}(x)-\eps|\underset{-\infty}{=}\mathcal{O}\left(e^{\frac{c_{\eps}}{\eps}x}\right)$. This will be useful in the next section when building the maximal decay solution in the cold zone. Note again how the rate blowup $c_{\eps}/\eps\sim c_0/\eps\to +\infty$ translates the degeneracy into free-boundary.
%
%
\section{Cold zone and asymptotic expansion}
\label{section:zonefroide}
%
%
In this section we construct the solution $u$ of \eqref{eq:Elinx} with maximal decay in the cold zone in the form of an asymptotic expansion $u=u_0+\eps u_1+\eps u_2$, with $u_2\ll u_1$ in some sense. For the sake of simplicity we omit the subscripts and write here $c=c_{\eps}$ and $p=p_{\eps}$. Let us anticipate that perturbations will have a spatial structure resembling the one of the wave, including a boundary layer of thickness $\mathcal{O}(\eps)$: we therefore scale again as
$$
x=\xi\eps,\quad q(\xi)=\frac{1}{\eps}p(\eps\xi),\quad v(\xi)=u(\eps\xi).
$$
The slope of the reference wave solution jumps inside the boundary layer from $dp/dx=dq/d\xi\approx 0$ to $dp/dx=dq/d\xi\approx(m-2)c>0$, and this transition is steeper and steeper when $\eps\rightarrow 0^+$: we expect a singularity of the second derivative somewhere, which is of course consistent with the slope discontinuity in the asymptotic profile $p_0(x)$. In order to later neglect $p''(x)$ in the linear zone we will have to push the exit of the boundary layer far enough so that our asymptotic expansion encompasses this singularity. Asymptote \eqref{eq:asymptotp''} shows that in order to do so we need $x\gg \eps^{\frac{1}{m-1}}$: setting
\begin{equation}
 x_{\eps}:=\eps^{1-a},\qquad \xi_{\eps}:=x_{\eps}/\eps,\qquad a:=\frac{m-2}{m-1}\left(1+\frac{\eta}{2}\right)
\label{eq:def_xeps}
\end{equation}
for some $\eta>0$, it is easy to see that $ \eps^{\frac{1}{m-2}}\ll x_{\eps}\ll 1$ holds if $\eta>0$. As a consequence we may safely neglect $p''(x)$ for $x\gg x_{\eps}$. Here $\eta>0$ is exactly the one in Theorem~\ref{theo:reldisp} and arbitrarily small, but its occurrence in the above definition of $a$ is purely technical. In agreement with Theorem~\ref{theo:reldisp} we anticipate that the relevant values of $s$ should be of order $k^{1-\frac{1}{m-1}}\gg 1$: through this whole section one should think of $s$ as of $\gamma k^{1-\frac{1}{m-1}}$ for some fixed constant $\gamma$ of order unity and independent of $\eps,k$, which will be adjusted later. We also recall that we are interested in $(\eps,k)\to(0,+\infty)$ in the double limit \eqref{eq:regime_eta}.
\par
Since we set $x_{\eps}\gg \eps$ out of the boundary layer we have $p(x_{\eps})\underset{\eps\to 0}{\sim} (m-2)c_0 x_{\eps}\ll 1$, and we may therefore omit the reaction term (of ignition type) on the cold interval $I=]-\infty,x_{\eps}]$. Recasting \eqref{eq:Elinx} in $\xi$ coordinates as
\begin{equation}
\begin{array}{l}
Lv  =  \eps h v,\\
L  :=  -q\frac{d^2}{d\xi^2}+\left(c-\frac{2q'}{m-2}\right)\frac{d}{d\xi}-q'',\\
h =\left(s-k^2\eps^{\frac{m}{m-2}}q^{\frac{m}{m-2}}\right),
\end{array}
\label{eq:defL}
\end{equation}
we shall seek below solutions in the form $v=v_0+\eps v_1 +\eps v_2$ with $v_2\ll v_1$ in some sense. We will refer to $L$ as the principal operator, which is also the linearized operator for non-wrinkled perturbations $k=0$.

\subsection{Maximal decay and principal operator}
Let us recall that we investigate \textit{maximal decay} perturbations $u(-\infty)=0$: the asymptotic equation associated with \eqref{eq:Elinx} for $x=-\infty$ yields two characteristic exponential rates
\begin{equation}
r^{\pm}=\frac{c\pm\sqrt{c^2+4\eps(k^2\eps^{\frac{m}{m-2}}-s)}}{2\eps}.
\label{eq:r+-}
\end{equation}
For $s$ of order $k^{1-\frac{1}{m-1}}$ regime \eqref{eq:regime_eta} implies that $0<\eps(s-k^2\eps^{\frac{m}{m-2}})\ll 1$, and \eqref{eq:r+-} consequently yields
\begin{equation}
0<r^-\ll r^+\sim \frac{c}{\eps},\qquad r^+<\frac{c}{\eps}.
\label{eq:expocaracteristiquemaxzonefroide}
\end{equation}
Maximal decay obviously corresponds here to $u(x)\underset{-\infty}{=}\mathcal{O}\left(e^{r^+x}\right)$.
\par
In order to work in some fixed functional setting (independent of $\eps$) let us define the following weighted spaces on $I=]-\infty,\xi_{\eps}]$
$$
w(\xi)=\left\{\begin{array}{cc}
e^{-\frac{c_0}{2}\xi} & \xi\leq 0\\
1 & \xi>0
\end{array}\right. \quad \text{and}\quad 
\begin{array}{lcl}
B_{w}^0 & = & \left\{f\in\mathcal{C}_b, \quad fw\in\mathcal{C}_b(I)\right\},\\
B_{w}^k & = & \left\{f\in\mathcal{C}^k_b, \quad\forall j\leq k: f^{(j)}\in B_{w}^0\right\},\\
B_{w,0}^k & = & \left\{f\in B_w^k, \hspace{.2cm} f(\xi_{\eps})=0\right\}
\end{array}
$$
(recall that $c_{\eps}\to c_0>0$). Here $\mathcal{C}^k_b$ denotes the space of functions with continuous bounded first $k$ derivatives, and the $B_w$'s are equipped with their usual Banach norm. On one hand \eqref{eq:expocaracteristiquemaxzonefroide} shows that any maximal decay solution must behave as $e^{\eps r^+\xi}$ when $\xi\to -\infty$ with $\eps r^+\underset{\eps,k}{\sim} c_0>c_0/2$, and therefore belongs to $B_{w}^0$. On the other hand any non maximal decay solution behaves as $v(\xi)\underset{-\infty}{\sim} e^{\eps r^-\xi}$ with $0<\eps r^-\underset{\eps,k}{\ll} c_0/2$: such solutions cannot belong to $B_{w}^0$, and as a consequence it is legitimate to look for maximal decay solutions only in $B_{w}^0$.\label{page:discussion_Bw}
\begin{lem}
The principal operator $L:B_{w,0}^2\longrightarrow B_w^0$ is continuously invertible and
\begin{equation}
||L^{-1}||\leq C \eps^{-a}.
\label{eq:L-1}
\end{equation}
\label{lem:L-1}
\end{lem}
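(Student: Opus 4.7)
The plan is an explicit variation-of-parameters construction based on two fundamental solutions of the homogeneous equation $Lv=0$, available in closed form on the cold interval $I=(-\infty,\xi_\eps]$ where the reaction term vanishes. The first solution is $v_+:=q'$: differentiating the reduced travelling wave equation $-pp''+(c-p'/(m-2))p'=0$ with respect to $x$ gives $L(q')=0$. This $v_+$ is strictly positive on $I$, satisfies $v_+(\xi)\sim e^{c_\eps\xi}$ at $-\infty$ by \eqref{eq:equivp'-infty} (hence $v_+\in B_w^0$ since $c_\eps>c_0/2$), and $v_+(\xi_\eps)\sim(m-2)c_0>0$. A linearly independent solution $v_-$ is obtained via Liouville: the first-order reduction $q'=(m-2)c_\eps(1-q^{-1/(m-2)})$ available in the cold zone integrates the Wronskian explicitly (substitute $u=q^{-1/(m-2)}$ and apply partial fractions) into the closed form $W=C_0\,q^{-1/(m-2)}(1-q^{-1/(m-2)})$, strictly positive on $I$ and equivalent to $e^{c_\eps\xi}$ at $-\infty$. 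Normalising $v_-$ to be bounded and nonzero at $-\infty$ (so that $v_-\notin B_w^0$) completes the fundamental system.

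Uniqueness is then immediate: any $v\in B_{w,0}^2$ with $Lv=0$ is a combination $\alpha v_++\beta v_-$; the decay $v\in B_w^0$ forces $\beta=0$, and then $v(\xi_\eps)=0$ together with $v_+(\xi_\eps)\neq 0$ forces $\alpha=0$. For existence I set, for $f\in B_w^0$,
\begin{equation*}
v(\xi):=v_+(\xi)\int_\xi^{\xi_\eps}\frac{v_-(\tau)f(\tau)}{q(\tau)W(\tau)}\,d\tau - v_-(\xi)\int_{-\infty}^\xi\frac{v_+(\tau)f(\tau)}{q(\tau)W(\tau)}\,d\tau + \alpha\,v_+(\xi),
\end{equation*}
where the lower limit $-\infty$ in the second integral is precisely what enforces maximal decay (the integrand is dominated by $e^{c_0\tau/2}\|f\|_{B_w^0}$ at $-\infty$ and is thus integrable), and $\alpha\in\R$ is tuned so that $v(\xi_\eps)=0$. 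By construction $Lv=f$ in the classical sense, and a case split according to the sign of $\xi$ shows $v\in B_w^0$.

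For the norm bound I estimate each term separately on $\xi\leq 0$ and on $0\leq\xi\leq\xi_\eps$. On the left part the exponential decays of $v_+$ and $W$ at $-\infty$ compensate the decay of $f$ encoded in the weight $w$, giving a uniform $\mathcal{O}(\|f\|_{B_w^0})$ contribution after multiplication by $w$. On the right part the algebraic behaviour in the linear zone ($q\sim\xi$, $v_-\sim\xi^{(m-3)/(m-2)}$, $W\sim\xi^{-1/(m-2)}$) combines to leave the integrands merely bounded by $C\|f\|_{B_w^0}$, while the length of integration is $\xi_\eps=\eps^{-a}$, producing the loss $\mathcal{O}(\eps^{-a}\|f\|_{B_w^0})$. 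The correction $\alpha v_+$ is of the same order for the same reason. Hence $\|v\|_{B_w^0}\leq C\eps^{-a}\|f\|_{B_w^0}$. The bounds on $v'$ and $v''$ follow by bootstrapping: $v'$ is read off directly from the differentiated formula (the boundary terms cancel by construction of the variation-of-parameters ansatz), and $v''$ is recovered algebraically from $-qv''=f-(c-2q'/(m-2))v'+q''v$ using $q\geq 1$ and the boundedness of $q',q''$ on $I$.

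The main obstacle is the careful bookkeeping of the algebraic cancellations in the Green's function across the transition between the boundary layer $|\xi|\lesssim 1$, where $q''$ has the sharp spike controlled by \eqref{eq:asymptotp''}, and the linear zone $1\ll\xi\leq\xi_\eps$. One has to verify that the various algebraic powers of $q$ in $v_-$, $W$ and $q$ itself indeed combine to leave the relevant integrands of order $\|f\|_{B_w^0}$, so that the only source of loss is the length $\xi_\eps=\eps^{-a}$ of the integration interval; the technical exponent $a$ in \eqref{eq:def_xeps} is tailored precisely so that this algebraic matching holds.
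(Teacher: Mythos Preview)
Your approach is correct and essentially the same as the paper's: both invert $L$ by variation of parameters using the explicit kernel element $q'$, and both extract the $\eps^{-a}$ loss from the length $\xi_\eps=\eps^{-a}$ of the linear zone together with the asymptotics \eqref{eq:asymptotvarx}--\eqref{eq:equivp'-infty}. The only cosmetic difference is that the paper writes the inverse as a nested double integral via reduction of order (integrating factor $e^{\Phi}$ with $\Phi=\int^{\xi} c/q$), whereas you make the second solution $v_-$ and the Wronskian explicit and use the full Green's function; swapping the order of integration in the paper's Duhamel formula \eqref{eq:duhamelLf=g} recovers your expression.
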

\begin{proof}
Defining $\Phi=:\int^{\xi}\frac{c}{q}$ it is easy to check that Duhamel's formula
\begin{equation}
f(\xi):=q'(\xi)\int\limits_{\xi}^{\xi_{\eps}}\left(\int\limits_{-\infty}^{z}\exp[\Phi(\eta)-\Phi(z)]\frac{g(\eta)}{q(\eta)q'(\eta)}d\eta\right)dz
\label{eq:duhamelLf=g}
\end{equation}
yields the unique solution in $B^2_{w,0}$ of $Lf=g$ for any given $g\in B_{w}$. The estimate for $L^{-1}$ then follows from \eqref{eq:asymptotvarx}-\eqref{eq:equivp'-infty} expressed in $\xi=x/\eps$ coordinates and the study of $\Phi$ for $\xi\to-\infty$ and $1\ll\xi\leq \xi_{\eps}=\eps^{-a}$ combined with \eqref{eq:duhamelLf=g}.
\end{proof}
%
%
\subsection{Asymptotic expansion and frequency regime}
\label{subsection_chap1:asymptotic_expansion}
Expanding $v=v_0+\eps v_1+\eps v_2$ with $v_2\ll v_1$ in some sense leads to solving $Lv=\eps h v$ as
\begin{subequations}
\begin{align}
Lv_0=&\;0,\label{eq:Lv0}\\
Lv_1=&\;hv_0,\label{eq:Lv1}\\
[L-\eps h]v_2=&\;\eps h v_1.\label{eq:Lv2}
\end{align}
\label{eq:Lvi=vj}
\end{subequations}
In order normalize perturbations $v(\xi_{\eps})=1$ we will require in addition that $v_0(\xi_{\eps})=1$ and $v_1(\xi_{\eps})=v_2(\xi_{\eps})=0$.
\begin{itemize}
\item
Resolution of \eqref{eq:Lv0}: the reference wave solution is as usual determined up to shifts and $L[q']=0$, thus yielding a suitable candidate $v_0=q'$ for the leading order. This is confirmed by \eqref{eq:equivp'-infty}, showing that $q'(\xi)=p'(\eps\xi)\underset{-\infty}{\sim} e^{c\xi}\underset{-\infty}{\ll} e^{\eps r^+\xi}$ decays fast enough. Normalizing finally leads to
$$
v_0:=\frac{q'}{q'(\xi_{\eps})}\in B_{w}^0.
$$
\item
Resolution of \eqref{eq:Lv1}: remark that $h=\left(s-k^2\eps^{\frac{m}{m-2}}q^{\frac{m}{m-2}}\right)\in L^{\infty}(I)$ implies $hv_0\in B_{w}^0$. Lemma~\ref{lem:L-1} then properly defines
$$
v_1:=L^{-1}[hv_0]\in B_{w,0}^2.
$$
\item
Resolution of \eqref{eq:Lv2}: taking advantage of the linear behaviour of $q(\xi)$ for large $\xi$ and regime \eqref{eq:regime_eta} with $s=\gamma k^{1-\frac{1}{m-1}}$ (for fixed $\gamma$ of order unity), we see that $h=s-k^2\eps^{\frac{m}{m-2}}q^{\frac{m}{m-2}}\sim s$ uniformly on $I=]-\infty,\xi_{\eps}]$: we may therefore consider the restriction of $L^{-1}$ to. The (continuous) restriction of $L^{-1}$ to the subspace $B_{w,0}^2$ therefore satisfies
\begin{equation}
||\eps L^{-1}[h\cdot]||_{\mathcal{L}(B_{w,0}^2)}\leq\eps||L^{-1}||.||h||_{\infty}\leq C \eps^{1-a}s\underset{\eps,k}{\ll} 1
\label{eq:epsL-1h<<1}
\end{equation}
according to Lemma~\ref{lem:L-1}, and $M:=\mathrm{Id}-\eps L^{-1}h\in\mathcal{L}(B_{w,0}^2)$ is hence close to Identity thus invertible. This finally allows to solve \eqref{eq:Lv2} as
$$
v_2:=M^{-1}(L^{-1}(\eps h v_1))\in B_{w,0}^2.
$$
\end{itemize}
\par
Regime \eqref{eq:regime_eta} also ensures that $v=v_0+\eps v_1+\eps v_2$ is really an asymptotic expansion in $B_w^2$, in the sense that $||\eps v_2||_{B_{w,0}^2}\ll||\eps v_1||_{B_{w,0}^2}\ll||v_0||_{B_{w}^0}=\mathcal{O}(1)$. More precisely, one easily shows using \eqref{eq:epsL-1h<<1} and $M\approx Id$ that
\begin{equation}
\begin{array}{ccccl}
v_1=L^{-1}(hv_0) & \Rightarrow  & ||\eps v_1||_{B_{w,2}^0}\leq C\eps^{1-a}s||v_0||_{B_{w}^0} & \underset{\eps,k}{\ll} & ||v_0||_{B_w^2},\\
v_2\approx L^{-1}(\eps h v_1) & \Rightarrow & ||\eps v_2||_{B_{w,2}^0}\leq C \eps^{1-a}s||\eps v_1||_{B_{w}^0} & \underset{\eps,k}{\ll} & ||\eps v_1||_{B_{w,0}^2}.
\end{array}
\label{eq:estimationepsv1epsv2}
\end{equation}
%
\subsection{Exit boundary conditions}
\label{subsection:conditionssortie}
We compute in this section  $(v,v')$ at the exit $\xi=\xi_{\eps}$ of the cold zone. Since we normalized $v(\xi_{\eps})=1$ it is enough to compute $v'$, and we estimate separately $v_0'(\xi_{\eps})$, $v_1'(\xi_{\eps})$ and $v_2'(\xi_{\eps})$.
\begin{itemize}
 \item 
Since we set the exit far enough out of the boundary layer, asymptotes \eqref{eq:asymptotvarx} hold at $x_{\eps}=\eps^{1-a}\gg\eps$ and therefore $\eps\frac{d^2 p}{dx^2}(x_{\eps})\underset{\eps\rightarrow 0}{\sim} A\eps^{a\left(1+\frac{1}{m-2}\right)}$. In terms of $\xi=x/\eps$ this corresponds to
\begin{equation}
v_0'(\xi_{\eps})=\mathcal{O}\left(\eps^{a\left(1+\frac{1}{m-2}\right)}\right).
\label{eq:v0'}
\end{equation}
\item
Let us recall that $Lv_1=hv_0=h\frac{q'}{q'(\xi_{\eps})}$ and $h\overset{L^{\infty}(I)}{\sim} s$, so that $v_1$ is close in $B_{w,0}^2$ to the solution of $Lv=s\frac{q'}{q'(\xi_{\eps})}$. Using \eqref{eq:duhamelLf=g} it is easy to compute explicitly $L^{-1}[q'](\xi)=(\xi_{\eps}-\xi)q'(\xi)$ hence
\begin{equation}
v_1(\xi)\approx \frac{s(\xi_{\eps}-\xi)q'(\xi)}{cq'(\xi_{\eps})} \quad\text{ in } B_{w,0}^2.
\label{eq:v1equiv}
\end{equation}
Since the $B_{w,0}^2$ topology controls the first two derivatives we obtain
\begin{equation}
v'_1(\xi_{\eps})\underset{\eps,k}{\sim}\frac{s}{c_{\eps}q'(\xi_{\eps})}\frac{d}{d\xi}\Big{[}(\xi_{\eps}-\xi)q'(\xi)\Big{]}_{\xi=\xi_{\eps}}=-\frac{s}{c_{\eps}}.
\label{eq:v1'}
\end{equation}
\item
Recalling that $M=\big{(}\mathrm{Id}-\eps L^{-1}h\big{)}\approx\mathrm{Id}$ in $\mathcal{L}(B_{w,0}^2)$, \eqref{eq:Lv2} and \eqref{eq:v1equiv} show that
$$
v_2  =  \big{(}\underbrace{\mathrm{Id}-\eps L^{-1}h}_{M\approx Id}\big{)}^{-1}L^{-1}[\underbrace{\eps h v_1}_{\approx \eps s v_1}]\approx \frac{\eps s^2}{c_{\eps}q'(\xi_{\eps})} L^{-1}\left[(\xi_{\eps}-\xi)q'\right]
$$
in $B_{w,0}^2$. Duhamel Formula \eqref{eq:duhamelLf=g} gives an explicit integral formula for $ L^{-1}\left[(\xi_{\eps}-\xi)q'\right]$: 
differentiating with respect to $\xi$, evaluating at $\xi=\xi_{\eps}$ and taking advantage of the asymptotic behaviour of $\Phi$ (for $\xi\to-\infty$ and $1\ll \xi\leq\xi_{\eps}$), one finally estimates
\begin{equation}
v'_2(\xi_{\eps})=\mathcal{O}\left(\eps^{1-a}s^2\right).
\label{eq:v2'}
\end{equation}
\end{itemize}
We claim now that the exit slope $v'(\xi_{\eps})$ is dominated by $\eps v_1'(\xi_{\eps})$ in the double limit $(\eps,k)\to(0,+\infty)$:
\begin{itemize}
\item 
From \eqref{eq:v0'} and \eqref{eq:v1'} we see that $|v_0'(\xi_{\eps})|\ll|\eps v_1'(\xi_{\eps})|$ holds as soon as $\eps^{a\left(1+\frac{1}{m-1}\right)}\ll \eps s$, which is true with $s=\gamma k^{1-\frac{1}{m-1}}$, $\eps\ll 1\ll k$ and by definition of $a=\frac{m-2}{m-1}\left(1+\frac{\eta}{2}\right)$.
\item
Similarly from \eqref{eq:v1'} and \eqref{eq:v2'} we see that $|\eps v'_2(\xi_{\eps})|\ll |\eps v'_1(\xi_{\eps})|$ holds in the regime \eqref{eq:regime_eta} with our choice of $a$.
\end{itemize}
Let us recall that $s$ is here of order $k^{1-\frac{1}{m-1}}\gg 1$ as anticipated from Theorem~\ref{theo:reldisp}: the new parameter
$$
\s:=\frac{s}{(m-2)c_{\eps}k^{1-\frac{1}{m-1}}}
$$
should therefore take values of order unity, and the exit conditions are finally summarized in terms of this new parameter $\s$ by
\begin{equation}
\left\{
\begin{array}{rcccl}
v(\xi_{\eps}) & = & v_0(\xi_{\eps}) & = & 1\\
\frac{dv}{d\xi}(\xi_{\eps}) & \sim & \eps \frac{dv_1}{d\xi}(\xi_{\eps}) & \sim & -\frac{\eps s}{c_{\eps}}=-(m-2)\s \eps k^{1-\frac{1}{m-1}}
\end{array}\right. .
\label{eq:sortiezonefroide}
\end{equation}
For technical reasons we will also need
\begin{prop}
For fixed $(\eps,k)$ the quantity $v'(\xi_{\eps})$ is continuously differentiable with respect to $\s$ and
\begin{equation}
 \frac{\partial}{\partial \s}\left[v'(\xi_{\eps})\right]=-(m-2)\eps k^{1-\frac{1}{m-1}}+r(\eps,k,\s)
\label{eq:dv'/dsigma}
\end{equation}
holds with $r(\eps,k,\s)=o\left(\eps k^{1-\frac{1}{m-1}}\right)$ when $(\eps,k)\overset{\mathcal{U}}{\rightarrow} (0,+\infty)$ and locally uniformly in $\s$.
\end{prop}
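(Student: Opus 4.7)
The plan is to differentiate $v=v_0+\eps v_1+\eps v_2$ term-by-term with respect to $\s$, noting that $\s$ enters only through $s=(m-2)c_\eps\s k^{1-\frac{1}{m-1}}$ and hence through $h=s-k^2\eps^{\frac{m}{m-2}}q^{\frac{m}{m-2}}$. In particular $h$ is affine in $\s$ with derivative $\partial_\s h=(m-2)c_\eps k^{1-\frac{1}{m-1}}$, constant in $\xi$ and of size $\mathcal{O}(s)$ in $L^\infty(I)$.

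Since $v_0=q'/q'(\xi_\eps)$ is $\s$-independent it contributes nothing to $\partial_\s v'(\xi_\eps)$. Since $L^{-1}$ and $v_0$ are both $\s$-independent, $v_1=L^{-1}[hv_0]$ is affine in $\s$ and $\partial_\s v_1=L^{-1}[(\partial_\s h)v_0]$. Repeating verbatim the derivation of \eqref{eq:v1equiv}--\eqref{eq:v1'} with $s$ replaced by the constant $\partial_\s h=(m-2)c_\eps k^{1-\frac{1}{m-1}}$, I obtain
\[
\partial_\s v_1'(\xi_\eps)\underset{\eps,k}{\sim}-(m-2)\,k^{1-\frac{1}{m-1}},
\]
so that $\eps\,\partial_\s v_1'(\xi_\eps)\sim-(m-2)\,\eps\,k^{1-\frac{1}{m-1}}$ produces the announced leading term.

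For $v_2$ I differentiate the defining equation $(L-\eps h)v_2=\eps h v_1$ in $\s$, which yields
\[
(L-\eps h)\,\partial_\s v_2=\eps(\partial_\s h)(v_1+v_2)+\eps h\,\partial_\s v_1.
\]
The operator $L-\eps h$ remains boundedly invertible from $B_{w,0}^2$ to $B_w^0$ thanks to $M=\mathrm{Id}-\eps L^{-1}h\approx\mathrm{Id}$ in $\mathcal{L}(B_{w,0}^2)$, see \eqref{eq:epsL-1h<<1}. The right-hand side has exactly the same structural form as the one defining $v_2$ itself in \eqref{eq:Lv2}: a product $\eps\cdot(\text{factor of size }s)\cdot v_1$ modulo the lower-order piece $\eps(\partial_\s h)v_2$ of the same type. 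The direct Duhamel-based computation of Section~\ref{subsection:conditionssortie} that led to \eqref{eq:v2'} therefore applies verbatim and gives $\eps\,\partial_\s v_2'(\xi_\eps)=\mathcal{O}(\eps^{2-a}s^2)$. Since the last bullet of Section~\ref{subsection:conditionssortie} already established that $\mathcal{O}(\eps^{2-a}s^2)=o(\eps\,k^{1-\frac{1}{m-1}})$ in the regime $\mathcal{U}$, this contribution is absorbed into the remainder $r(\eps,k,\s)$.

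Continuity of $\partial_\s v'(\xi_\eps)$ in $\s$ is automatic: the whole construction depends analytically on $\s$ via the Neumann series $M^{-1}=\sum_{n\geq 0}(\eps L^{-1}h)^n$, which converges in $\mathcal{L}(B_{w,0}^2)$ uniformly for $\s$ in compact sets thanks to \eqref{eq:epsL-1h<<1}, and all the size estimates above are likewise uniform on compact $\s$-sets, yielding the local uniformity of $r(\eps,k,\s)$. The only real technical step is the bookkeeping for the $\partial_\s v_2$-term, which is a direct replica of the one already done for $v_2$ itself in Section~\ref{subsection:conditionssortie} and poses no new difficulty.
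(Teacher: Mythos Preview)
Your proposal is correct and follows essentially the same approach as the paper: differentiate the expansion $v=v_0+\eps v_1+\eps v_2$ with respect to $\s$ (the paper writes $z=\partial_\s v=z_0+\eps z_1+\eps z_2$), observe that the zeroth-order piece vanishes since $v_0$ is $\s$-independent, and identify the first-order term $\eps\,\partial_\s v_1'(\xi_\eps)\sim -(m-2)\eps k^{1-\frac{1}{m-1}}$ as the dominant contribution while the $v_2$-piece is handled by the same Duhamel bookkeeping as in \eqref{eq:v2'}. The paper's own proof is merely sketched and omits the technical computations you have spelled out.
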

Remark that \eqref{eq:dv'/dsigma} is indeed consistent with a formal differentiation of \eqref{eq:sortiezonefroide} with respect to $\s$.
\label{prop:sortieZFd/dsigma}
\begin{proof}
Regularity with respect to $\s$ is a classical consequence of the linear dependence on $\s$ of \eqref{eq:defL}. Writing $z=\frac{\partial v}{\partial \s}$, estimate \eqref{eq:dv'/dsigma} is just a computation for $\frac{dz}{d\xi}(\xi_{\eps})=-(m-2)\eps k^{1-\frac{1}{m-1}}+...$ with $z=z_0+\eps z_1+\eps z_2$. Exactly as for the slope $\frac{dv}{d\xi}$ the order one determines here the dependence on $\s$ at the exit, namely $\frac{dz}{d\xi}(\xi_{\eps})\underset{\eps,k}{\sim}\eps \frac{dz_1}{d\xi}(\xi_{\eps})$. The technical computations are omitted here in order to keep this paper in a reasonable length. \end{proof}

\section{Hot zone and asymptotic problem}
\label{section:zonechaude}
In this section we take a formal limit $\eps=0$, $k=+\infty$ in some scaled $\z$ coordinates corresponding to a suitable (infinite) zoom in the linear zone. We obtain an asymptotic eigenvalue problem on the half line $\z>0$ ($\z=0$ corresponds to the exit of the boundary layer after zooming out), and this will relate the asymptotic coefficient $\gamma_0$ in Theorem~\ref{theo:reldisp} to some principal eigenvalue $\s_0$ of the limiting problem. The main issue in this section is precisely to compute this $\s_0$.
\\
\par
We anticipated that $s$ should be of order $k^{1-\frac{1}{m-1}}$, which also turns to be the suitable length-scale to investigate the linear region: scaling \eqref{eq:Elinx} as
\begin{equation}
\begin{array}{ccc}
\z=k^{1-\frac{1}{m-1}}x & \quad & \sigma=\frac{s}{(m-2)ck^{1-\frac{1}{m-1}}}\\
v(\z)=u\left(\frac{\z}{k^{1-\frac{1}{m-1}}}\right) & & q(\z)=k^{1-\frac{1}{m-1}}p\left(\frac{\z}{k^{1-\frac{1}{m-1}}}\right)
\end{array}
\label{def:variablezeta}
\end{equation}
for $k<\infty$ leads to
\begin{equation}
-qv''+\left(c-\frac{2q'}{m-2}\right)v'-q''v=\left((m-2)c\s-q^{\frac{m}{m-2}}+\frac{G'(q/k^{1-\frac{1}{m-1}})}{k^{1-\frac{1}{m-1}}}\right)v
\label{eq:Elinzetak} 
\end{equation}
with $q=q_{\eps,k}(\z)$ and where $p=p_{\eps}$ and $c=c_{\eps}$ depend only on $\eps$. This scaling is again  Lipschitz $\frac{dq}{d\z}=\frac{dp}{dx}$, and the parameter $\s$ takes values of order unity.
\par
We recall from Section~\ref{section:ondeplane} that in the limit $\eps\to 0$ we have $c_{\eps}\to c_0>0$, and the asymptotic wave profile $p_0=\displaystyle{\lim_{\eps\to 0}\, p_{\eps}}$ is exactly linear for $x\in[0,\frac{\theta}{(m-2)c_0}]$. In $\z$ coordinates 
the exit of the linear zone $x=x_{\theta}$ corresponds to
$$
\z_k:=\frac{\theta k^{1-\frac{1}{m-1}}}{(m-2)c_0}\underset{k}{\longrightarrow} +\infty,
$$
while the exit of the boundary layer $x=x_{\eps}$ is now
$$
\z_{\eps}=k^{1-\frac{1}{m-1}}x_{\eps}=k^{1-\frac{1}{m-1}}\eps^{1-a}\underset{\eps,k}{\longrightarrow} 0
$$
in the regime \eqref{eq:regime_eta}. The wave profiles behave as $q(\z)\approx q_0(\z)=(m-2)c_0\z$ in the linear zone $\z\in[\z_{\eps},\z_k]$, which grows to $]0,+\infty[$. Moreover since $\frac{dG}{dp}(p_0)$ and $\frac{d^2 p_0}{dx^2}$ are of order 1 for $x\in]0,+\infty[$, we may neglect
$$
q''(\z)\approx\frac{p_0''(x)}{k^{1-\frac{1}{m-1}}}\underset{k}{\rightarrow} 0,\qquad \frac{G'(q(\xi)/k^{1-\frac{1}{m-1}})}{k^{1-\frac{1}{m-1}}}\approx\frac{G'(p_0(x))}{k^{1-\frac{1}{m-1}}}\underset{k}{\rightarrow} 0
$$
at least formally when $\eps=0$ and $k=+\infty$ on $\z\in]0,+\infty[$. As a matter of fact a singularity of $\frac{d^2p_{\eps}}{dx^2}$ appears inside the boundary layer due to the slope discontinuity of $p_0$, but this will be carefully examined in Section~\ref{section:raccord} (this is precisely the reason why we pushed the exit of the cold $x=x_{\eps}$ zone far enough).
\par
Formally taking $\eps=0$, $k=+\infty$, $q(\z)=q_0(\z)=(m-2)c_0\z$, $\z_{\eps}=0$ and $\z_k=+\infty$ in \eqref{eq:Elinzetak} we obtain the following asymptotic problem:
\begin{equation}
\z\in]0,+\infty[,\qquad -\z v''-\frac{v'}{m-2}+bv=\s v,
\label{eq:Elinzeta}
\end{equation}
with
$$
b(\z)=B\z^{\frac{m}{m-2}},\qquad B:=[(m-2)c_0]^{\frac{2}{m-2}}>0.
$$
Since we were originally interested in perturbations $u(x)$ vanishing at infinity we also require in \eqref{eq:Elinzeta} that $v(+\infty)=0$, which is again a formal but suitable condition.
\begin{rmk}
For fixed $\s$ and $\z=+\infty$ equation \eqref{eq:Elinzeta} reads $-v''+B\z^{\frac{2}{m-2}}v=0$. One easily sees that there exists a one dimensional space of stable solutions $v(+\infty)=0$, while any other solution must blow $v(+\infty)=+\infty$ (both at least exponentially).
\label{rmk:eqasymptotiquezeta}
\end{rmk}
Recalling now that we also look for signed perturbations (as principal eigenfunctions), the parameter $\s$ clearly appears here as a principal eigenvalue to be determined for the problem \eqref{eq:Elinzeta} with associated zero boundary condition at infinity. Two main difficulties arise here compared to usual Sturm-Liouville theory: unboundedness $\z\in\R^+$ and singularity at $\z=0$. As a matter of fact we shall seek solutions which are $\mathcal{C}^1$ at $\z=0$, and one should think of this regularity as a boundary condition.
\\
\par
Throughout the rest of Section~\ref{section:zonechaude} we will consider $\eps=0$, $k=+\infty$, and the only relevant dependence will be the one on $\s$. In order to keep our notations as light as possible we will write $p=p_0(x)=(m-2)c_0x$, $q=q_0(\z)=(m-2)c_0\z$ and $c=c_0$. The main result in this section is:
\begin{theo}
There exists a principal eigenvalue $\s_0>0$ such that problem \eqref{eq:Elinzeta} has a \textit{positive} solution $v_0\in\mathcal{C}^1([0,+\infty[)\cap\mathcal{C}^2(]0,+\infty[)$ satisfying boundary conditions $v_0(0)=1$ and $v(+\infty)=0$. This principal eigenfunction satisfies in addition $v_0'(0)=-(m-2)\s_0$ and $v_0'< 0$.
\label{theo:vpprincipale}
\end{theo}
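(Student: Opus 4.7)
First I would analyze the regular singular point at $\z=0$ by Frobenius. Rewriting \eqref{eq:Elinzeta} as
$$v''+\frac{1}{(m-2)\z}v'+\frac{\s-B\z^{m/(m-2)}}{\z}\,v=0,$$
the indicial equation at the origin is $\rho(\rho-1)+\rho/(m-2)=0$, with roots $\rho=0$ and $\rho=(m-3)/(m-2)\in(0,1)$ differing by a non-integer (this is where the assumption $m>3$ enters). Frobenius theory therefore supplies two linearly independent local solutions: a regular branch $v^{\mathrm{reg}}_\s(\z)=1+c_1\z+o(\z)$ and a singular branch $v^{\mathrm{sing}}_\s(\z)=\z^{(m-3)/(m-2)}(1+o(1))$. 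Only the regular branch is $\mathcal{C}^1$ at $0$, since $(v^{\mathrm{sing}})'(\z)\sim \tfrac{m-3}{m-2}\z^{-1/(m-2)}\to+\infty$. Matching the $\z^0$ coefficient obtained by plugging the series into \eqref{eq:Elinzeta} forces $c_1=-(m-2)\s$, so that any $\mathcal{C}^1$-at-$0$ solution normalized by $v(0)=1$ automatically satisfies $v'(0)=-(m-2)\s$; this already accounts for the identity $v_0'(0)=-(m-2)\s_0$ claimed in the statement.

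Next I would recast \eqref{eq:Elinzeta} in symmetric Sturm--Liouville form by multiplying through by $\z^{-(m-3)/(m-2)}$, obtaining
$$-\bigl(\z^{1/(m-2)}v'\bigr)'+B\z^{3/(m-2)}v=\s\,\z^{-(m-3)/(m-2)}v,$$
and study it on $H:=L^2\bigl(\R_+,\z^{-(m-3)/(m-2)}d\z\bigr)$ with the self-adjoint realization $A$ selecting the regular Frobenius branch at $\z=0$ and $H$-integrability at $+\infty$. The principal eigenvalue is then characterized by the Rayleigh quotient
$$\s_0=\inf_{v\neq 0}\frac{\int_0^{+\infty}\bigl(\z^{1/(m-2)}(v')^2+B\z^{3/(m-2)}v^2\bigr)d\z}{\int_0^{+\infty}v^2\,\z^{-(m-3)/(m-2)}d\z},$$
and existence of a minimizer will follow from a compact embedding of the form domain into $H$. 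At $+\infty$, this comes from the Rellich-type bound $\int_R^{+\infty}v^2\z^{-(m-3)/(m-2)}d\z\leq B^{-1}R^{-m/(m-2)}\int_R^{+\infty}B\z^{3/(m-2)}v^2\,d\z$. Near $\z=0$, finiteness of $\int\z^{1/(m-2)}(v')^2\,d\z$ together with the integrability of $\int_0^1\z^{-1/(m-2)}\,d\z$ (valid for $m>3$) yields, via Cauchy--Schwarz, uniform Hölder continuity on $[0,1]$ and hence a uniform sup-norm bound there, so that $\int_0^r v^2\z^{-(m-3)/(m-2)}\,d\z\lesssim r^{1/(m-2)}\to 0$ as $r\to 0$. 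Since $B>0$ the form is strictly positive, giving $\s_0>0$.

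Finally, replacing the minimizer by $|v_0|$ leaves the Rayleigh quotient invariant, so one may assume $v_0\geq 0$; local elliptic regularity on $(0,+\infty)$ combined with Harnack's inequality upgrades this to $v_0>0$ and $v_0\in\mathcal{C}^2((0,+\infty))$, and the regular-branch condition at $0$ forces $v_0(0)>0$, allowing the normalization $v_0(0)=1$, after which paragraph~1 yields $v_0'(0)=-(m-2)\s_0<0$. For the strict monotonicity $v_0'<0$ throughout, suppose by contradiction that $\z_\star>0$ is the smallest zero of $v_0'$: then $\z_\star$ is a local minimum of $v_0$, so $v_0''(\z_\star)\geq 0$, and evaluating the ODE at $\z_\star$ forces $B\z_\star^{m/(m-2)}\geq\s_0$; since $v_0>0$ and $v_0(+\infty)=0$, $v_0$ must admit a subsequent local maximum at some $\z_{\star\star}>\z_\star$, where the same evaluation yields $B\z_{\star\star}^{m/(m-2)}\leq \s_0$, contradicting the strict monotonicity of $\z\mapsto \z^{m/(m-2)}$. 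The hardest step is the rigorous construction of the self-adjoint operator $A$: because both Frobenius solutions are square-integrable against $\z^{-(m-3)/(m-2)}$ near $0$ for $m>3$, the endpoint $\z=0$ is of limit-circle type, so the regular-branch selector must be imposed as a genuine boundary condition, shown compatible with the variational framework, and one has to verify that the minimizer indeed lies in the regular (not the singular) branch.
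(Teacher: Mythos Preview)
Your Frobenius analysis and the computation $v'(0)=-(m-2)\sigma$ are correct, and your monotonicity argument is essentially fine (modulo the minor caveat that the first zero $\zeta_\star$ of $v_0'$ could a priori be a touching point rather than a local minimum of $v_0$; one disposes of this by checking that $v_0''(\zeta_\star)=0$ forces $b(\zeta_\star)=\sigma_0$ and then $v_0'''(\zeta_\star)>0$, so $v_0'$ does become positive). The overall strategy, however, is genuinely different from the paper's and contains a real gap.

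The paper argues by shooting in $\sigma$: it constructs directly the unique $\mathcal{C}^1$-at-$0$ branch $v_\sigma$ (Proposition~\ref{prop:vreguliereens}), shows $v_\sigma(+\infty)=+\infty$ for small $\sigma$ (Proposition~\ref{prop:sigmapetit}) and that $v_\sigma$ becomes negative for large $\sigma$ (Proposition~\ref{prop:v<0}), and defines $\sigma_0$ as the supremum of the positivity range \eqref{eq:defsigma0}. No spectral or variational machinery is needed.

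In your variational approach the difficulty at $\zeta=0$ is more serious than you indicate. Both Frobenius branches lie in your form domain (for the singular branch $v^{\mathrm{sing}}\sim\zeta^{(m-3)/(m-2)}$ one has $\zeta^{1/(m-2)}((v^{\mathrm{sing}})')^2\sim\zeta^{-1/(m-2)}$, which is integrable for $m>3$). The Friedrichs extension at a limit-circle endpoint selects the \emph{principal} solution, which here is $v^{\mathrm{sing}}$ since $v^{\mathrm{sing}}/v^{\mathrm{reg}}\to 0$; concretely, your own H\"older estimate shows that any form-norm limit of $C_c^\infty(0,\infty)$ functions satisfies $v(0)=0$. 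Hence the Rayleigh minimum over the Friedrichs form domain produces an eigenfunction with $v(0)=0$, \emph{not} the $\mathcal{C}^1$ regular branch with $v(0)=1$ that the theorem asks for. The self-adjoint realization imposing the regular-branch boundary condition is a different extension, and its bottom eigenvalue is not given by your Rayleigh infimum; establishing that it is positive and attained requires a separate argument. The paper's shooting method avoids this issue entirely by never invoking a variational characterization.
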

In this statement the $\mathcal{C}^1$ regularity at $\z=0$ and $v(+\infty)=0$ should be seen as a boundary condition, in contrast with $v_0(0)=1$ which is simply a normalization. Note that according to scaling \eqref{def:variablezeta} $\s_0$ of order unity means $s$ of order $k^{1-\frac{1}{m-1}}$: this will yield the asymptotic coefficient $\gamma_0=(m-2)c_0\s_0$ in Theorem~\ref{theo:reldisp}.
%
%
%
\subsection{The asymptotic principal eigenvalue}
In this section we first establish some technical results and then prove Theorem \ref{theo:vpprincipale}. We start by studying the singularity at $\z=0$.
\begin{prop}
For fixed $\s\in\R$ there exists a unique solution $v_{\s}\in\mathcal{C}^2(]0,+\infty[)\cap\mathcal{C}^1([0,+\infty[)$ of \eqref{eq:Elinzeta} such that $v_{\s}(0)=1$. In addition this solution satisfies $v'_{\s}(0)=-(m-2)\s$, and the mapping $\s\mapsto v_{\s}(.)$ is $\mathcal{C}^1$ from $\R $ into $\mathcal{C}^1([0,\z_0])$ for any fixed $\z_0>0$ .
\label{prop:vreguliereens}
\end{prop}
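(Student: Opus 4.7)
The plan is to handle the regular singularity of \eqref{eq:Elinzeta} at $\z=0$ via a local integral-equation fixed point, then extend globally by Cauchy--Lipschitz. Writing the ODE as $\z v''+\frac{v'}{m-2}=(b(\z)-\sigma)v$ and substituting a formal power solution $v\sim \z^{r}$ into the leading balance yields the indicial equation $r(r-1)+\frac{r}{m-2}=0$, with roots $r_1=0$ and $r_2=\frac{m-3}{m-2}\in(0,1)$ when $m>3$. The second branch has derivative $\sim \z^{-1/(m-2)}$ blowing up at the origin, so the $\mathcal{C}^1$ condition at $\z=0$ singles out (up to scalar) the first, analytic branch: this is precisely what makes the Proposition true, and also immediately suggests uniqueness.

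The key step is to recast \eqref{eq:Elinzeta} via the identity $(\z^{1/(m-2)}v')'=\z^{1/(m-2)-1}\bigl(\z v''+\frac{v'}{m-2}\bigr)$. Assuming $v$ is $\mathcal{C}^1$ at $0$ with $v(0)=1$, integrating from $0$ yields
$$v'(\z)=\z^{-\frac{1}{m-2}}\int_0^{\z}\eta^{\frac{1}{m-2}-1}\bigl(b(\eta)-\sigma\bigr)v(\eta)\,d\eta,$$
and a second integration produces the integral equation
$$v(\z)=1+\int_0^{\z}\tau^{-\frac{1}{m-2}}\int_0^{\tau}\eta^{\frac{1}{m-2}-1}\bigl(b(\eta)-\sigma\bigr)v(\eta)\,d\eta\,d\tau \;=:\;T_{\sigma}[v](\z).$$
Since $b(0)=0$, both integrals are absolutely convergent at $0$; a direct computation shows that $T_{\sigma}$ maps the closed ball $\{\|v-1\|_{\infty}\leq 1\}$ of $\mathcal{C}([0,\delta])$ into itself and is a contraction for $\delta>0$ small, with constants uniform in $\sigma$ on compact subsets of $\R$. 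Its unique fixed point $v_{\sigma}$ is $\mathcal{C}^1$ at $0$, and passing to the limit $\z\to 0^+$ in the formula for $v'$ (the leading term of the integrand being $-\sigma\,\eta^{1/(m-2)-1}$) yields directly $v_{\sigma}'(0)=-(m-2)\sigma$, as claimed.

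Once $v_{\sigma}$ is constructed on $[0,\delta]$, the ODE is regular on $[\delta/2,+\infty[$, so standard Cauchy--Lipschitz uniquely extends $v_{\sigma}$ as a $\mathcal{C}^2$ function on $]0,+\infty[$, and in particular on $[0,\z_0]$ for any $\z_0>0$. Global uniqueness in the class $\mathcal{C}^1([0,+\infty[)\cap \mathcal{C}^2(]0,+\infty[)$ with $v(0)=1$ follows because any two such solutions differ by a homogeneous solution that is $\mathcal{C}^1$ at $0$ with $w(0)=0$; decomposing $w$ locally along the two indicial branches, the condition $w(0)=0$ kills the analytic component while $\mathcal{C}^1$ regularity at $0$ kills the $\z^{(m-3)/(m-2)}$ component, forcing $w\equiv 0$ near $0$ and hence everywhere. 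For the $\mathcal{C}^1$ dependence on $\sigma$, the affine dependence of $T_{\sigma}$ on $\sigma$ together with the implicit function theorem gives a $\mathcal{C}^1$ map $\sigma\mapsto v_{\sigma}\in\mathcal{C}([0,\delta])$; differentiating the integral formula for $v_{\sigma}'$ shows that $\partial_{\sigma}v_{\sigma}'$ extends continuously to $\z=0$ with value $-(m-2)$, promoting the regularity to $\mathcal{C}^1\bigl(\R,\mathcal{C}^1([0,\delta])\bigr)$, which then propagates to $\mathcal{C}^1([0,\z_0])$ by the classical $\mathcal{C}^1$ dependence of solutions on parameters in the regular region $\z\geq \delta/2$.

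The main obstacle is the careful choice of functional framework around $\z=0$: one must verify that the composition of the two weighted integrations produces a genuine contraction on a ball of $\mathcal{C}([0,\delta])$, and that the resulting fixed point actually enjoys $\mathcal{C}^1$ regularity at the origin with the precise slope $-(m-2)\sigma$, rather than merely the analyticity-type regularity suggested by the formal indicial analysis. Everything else is standard linear ODE theory.
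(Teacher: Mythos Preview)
Your argument is correct and follows the same overall architecture as the paper: a local fixed-point construction near the regular singular point $\z=0$, extension by Cauchy--Lipschitz where the equation is regular, uniqueness from the structure of the singularity, and $\mathcal{C}^1$ dependence on $\s$ via the Implicit Function Theorem first locally and then propagated. The one genuine difference is in the local step: the paper first solves the model equation $\z w''+\frac{w'}{m-2}+\s w=0$ explicitly by power series (appealing to Coddington--Levinson on first-kind singularities), then writes $v=w+h$ and runs a Banach fixed point for the correction $h$; you instead exploit the divergence form $(\z^{1/(m-2)}v')'=\z^{1/(m-2)-1}(b-\s)v$ to write a direct integral equation for $v$ and contract on that. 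Your route is slightly more economical (no auxiliary power series solution needed, and the slope $v'(0)=-(m-2)\s$ drops out of the integral formula), while the paper's decomposition makes the Frobenius structure more transparent and lets uniqueness be quoted from the literature. One small remark: your uniqueness paragraph via ``decomposing along the two indicial branches'' is a bit informal; a cleaner route, already implicit in your setup, is to observe that any $\mathcal{C}^1$ solution with $v(0)=1$ necessarily satisfies your integral equation (since $\z^{1/(m-2)}v'\to 0$), hence coincides with the unique fixed point on $[0,\delta]$ and therefore globally.
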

\begin{proof}
Remarking that the exponent in the zero-th order coefficient $b(\z)=B\z^{\frac{m}{m-2}}$ is $\frac{m}{m-2}>1$, solutions of \eqref{eq:Elinzeta} ``should not see'' this coefficient in the neighbourhood of $\z=0$ and therefore behave as
\begin{equation}
\z w''+\frac{w'}{m-2}+\s w=0.
\label{eq:Elinzetab=0}
\end{equation}
It is easy to obtain a regular solution $w\in\mathcal{C}^2(]0,+\infty[)\cap\mathcal{C}^1([0,+\infty[)$ of \eqref{eq:Elinzetab=0}  satisfying $w(0)=1$ and $w'(0)=-(m-2)\s$ in the form of a power series. This solution is unique (see \cite{CodLev-ODE} Theorem 6.1 p.169, first kind singularities) and classically regular with respect to $\s$.
\par
We may now look for solutions of \eqref{eq:Elinzeta} in the form $v=w+h$: $h$ should obviously solve a non-homogeneous equation involving $w$, and the limit $\z\rightarrow 0$ also yields $h(0)=h'(0)=0$ (remember that $w(0)=1$ and $w'(0)=-(m-2)\s$). The proof consists in two steps: a classical Banach fixed point on the integral formulation for $h$ first yields a solution on $[0,\o{\z}]$ for fixed small $\o{\z}>0$, thus stepping away from the singularity. This solution then extends to the right $[\o{\z},+\infty[$, where the ODE is now regular. Uniqueness is obtained as for $w$ since the singularity at $\z=0$ is of the first kind, see again \cite{CodLev-ODE}. Regularity of $h$ with respect to $\s$ is obtained first on $[0,\o{\z}]$ applying the Implicit Functions Theorem in the integral formulation, and then on $[\o{\z},\z_0]$ by classical regularity of Cauchy solutions with respect to initial conditions and parameters. \end{proof}
For the sake of clarity we will denote by $v=v_{\s}$ this unique regular solution of \eqref{eq:Elinzeta} and will only consider $\s\geq 0$.
\begin{prop}
For $\s\geq 0$ only three scenarios are possible for \eqref{eq:Elinzeta} at infinity: $v(+\infty)=+\infty$, $v(+\infty)=0$ and $v(+\infty)=-\infty$. Moreover, if $\z_{\s}=\left(\frac{\s}{B}\right)^{\frac{m-2}{m}}$ is the first time where $b(\z)=\s$, the following holds:
\begin{enumerate}
 \item 
If there exists $\z_1\geq\z_{\s}$ such that $v(\z_1)>0$ and $v'(\z_1)\geq0$, then $v(\z)\geq v(\z_1)>0$ on $[\z_1,+\infty[$ and $v(+\infty)=+\infty$.
\item
If there exists $\z_1\geq\z_{\s}$ such that $v(\z_1)<0$ and $v'(\z_1)\leq0$, then $v(\z)\leq v(\z_1)<0$ on $[\z_1,+\infty[$ and $v(+\infty)=-\infty$.
\end{enumerate}
\label{prop:alternative+infty}
\end{prop}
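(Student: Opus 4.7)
The plan is to put \eqref{eq:Elinzeta} in divergence form with the right integrating factor, read off a monotonicity property in the region $\{\z \geq \z_\s\}$, and then handle the trichotomy at $+\infty$ by combining this monotonicity with the one-dimensional stable space provided by Remark~\ref{rmk:eqasymptotiquezeta}. First I would rewrite \eqref{eq:Elinzeta} as $\z v'' + \frac{v'}{m-2} = (b(\z)-\s)v$ and observe that the left-hand side equals $\z^{1-\frac{1}{m-2}}\frac{d}{d\z}\bigl(\z^{\frac{1}{m-2}}v'\bigr)$. Setting $F(\z):=\z^{\frac{1}{m-2}}v'(\z)$ therefore yields the identity
\begin{equation*}
F'(\z) \;=\; \z^{\frac{1}{m-2}-1}\bigl(b(\z)-\s\bigr)\,v(\z),\qquad \z>0,
\end{equation*}
so the sign of $F'$ is directly dictated by the sign of $v$ as soon as $\z\geq \z_\s$.

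For item~1, I would introduce $S:=\sup\{\z\geq \z_1 : v>0 \text{ on } [\z_1,\z]\}$. On $[\z_1,S)$ one has $b\geq \s$ and $v>0$, hence $F'\geq 0$; combined with $F(\z_1)=\z_1^{1/(m-2)}v'(\z_1)\geq 0$ this gives $F\geq 0$, i.e.\ $v'\geq 0$, so $v\geq v(\z_1)>0$ throughout $[\z_1,S)$. By continuity this forces $S=+\infty$, which is precisely the claimed trapping $v\geq v(\z_1)$ on $[\z_1,+\infty[$. Plugging the uniform lower bound $v\geq v(\z_1)$ and $b(\z)=B\z^{m/(m-2)}$ back into the expression for $F'$ produces $F'(\z)\geq C\z^{\frac{3}{m-2}}$ for large $\z$ and some $C>0$; integrating twice (using $v'=\z^{-1/(m-2)}F$) then yields at least polynomial growth of $v$, whence $v(+\infty)=+\infty$. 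Item~2 follows verbatim after replacing $v$ by $-v$.

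For the trichotomy at infinity I would invoke Remark~\ref{rmk:eqasymptotiquezeta}: near $\z=+\infty$ equation \eqref{eq:Elinzeta} behaves like the repulsive Schr\"odinger-type equation $-v''+B\z^{\frac{2}{m-2}}v\approx 0$, and the remark provides a one-dimensional space $\R\varphi_-$ of solutions with $\varphi_-(+\infty)=0$. A standard Liouville--Green/WKB (or Wronskian) argument then exhibits a linearly independent solution $\varphi_+$ with $|\varphi_+(\z)|\to +\infty$, so that any solution of \eqref{eq:Elinzeta} decomposes as $v=\alpha\varphi_++\beta\varphi_-$. If $\alpha=0$ then $v(+\infty)=0$; otherwise, for $\z$ large enough, $v$ and $v'$ inherit the same nonzero sign from the dominating term $\alpha\varphi_+$, and items~1--2 then force $v(+\infty)=\pm\infty$ according to $\mathrm{sgn}\,\alpha$. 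The main obstacle I expect is producing $\varphi_\pm$ with sharp enough asymptotics that $v$ and $v'$ do become sign-compatible for large $\z$; since the asymptotic potential is positive and admits no oscillatory solutions, this is tractable either via a direct WKB expansion or, more economically, by a shooting/Wronskian argument anchored on the stable line supplied by Remark~\ref{rmk:eqasymptotiquezeta}.
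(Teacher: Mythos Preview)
Your argument is correct and, for items~1 and~2, essentially identical to the paper's: both rewrite \eqref{eq:Elinzeta} in the divergence form $[\z^{\frac{1}{m-2}}v']'=\z^{\frac{1}{m-2}-1}(b-\s)v$ and integrate from $\z_1$, using the sign of $b-\s$ on $[\z_\s,+\infty[$ to trap $v$ and $v'$.

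For the trichotomy the two arguments diverge. The paper dispatches it in one line: for large $\z$ the equation reads $-v''+B\z^{2/(m-2)}v\approx 0$ with strictly positive zero-th order coefficient, so the classical Maximum Principle forbids interior extrema of the wrong sign; hence $v$ is eventually monotone and the only finite limit consistent with the equation is $0$. You instead pass through the stable/unstable decomposition $v=\alpha\varphi_++\beta\varphi_-$ suggested by Remark~\ref{rmk:eqasymptotiquezeta}, and then feed the eventual sign-compatibility of $v,v'$ back into items~1--2. This works but is heavier: you correctly identify as the ``main obstacle'' producing $\varphi_\pm$ with asymptotics sharp enough that the dominant term governs both $v$ and $v'$, which amounts to a Liouville--Green/WKB estimate for a non-autonomous equation. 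The paper sidesteps this analysis entirely. What your route buys is an explicit growth rate (you extract polynomial, in fact super-polynomial, divergence), whereas the paper only asserts $v(+\infty)=\pm\infty$ without quantifying it; but since the proposition does not require a rate, the Maximum Principle shortcut is the more economical choice here.
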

\begin{proof}
When $\z=+\infty$ \eqref{eq:Elinzeta} reads $-v''+B\z^{\frac{2}{m-2}}v=0$ and therefore satisfies the classical Maximum Principle ($B=cst>0$). As a consequence either $v(+\infty)=\pm\infty$, either $v(+\infty)=cst$, and the only possible finite limit is clearly $v(+\infty)=0$. Recasting \eqref{eq:Elinzeta} as $[\z^{\frac{1}{m-2}}v']'=\frac{b-\s}{\z^{1-\frac{1}{m-2}}}v$ we have by definition $b(\z)-\s>0$ for $\z>\z_{\s}$, and the rest of the statement is easily obtained integrating from $\z_1$ to $+\infty$. \end{proof}
\begin{prop}
For $\s\geq 0$ small enough $v>0$ holds on $[0,+\infty[$, and $v(+\infty)=+\infty$.
\label{prop:sigmapetit}
\end{prop}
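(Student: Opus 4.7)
The plan is a continuation argument seeded at $\s=0$. The key device, already used in the proof of Proposition~\ref{prop:alternative+infty}, is to recast \eqref{eq:Elinzeta} in the divergence form
\[
[\z^{\frac{1}{m-2}} v']' = \z^{\frac{1}{m-2}-1}(b(\z) - \s)v.
\]
At $\s=0$ the coefficient $b-\s=b\geq 0$ is non-negative on $[0,+\infty[$, which will be the engine of the whole argument.

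First I will handle $\s=0$. Recall from Proposition~\ref{prop:vreguliereens} that $v_0(0)=1$ and $v_0'(0)=0$. I claim that $v_0\geq 1$ and $v_0'\geq 0$ throughout $[0,+\infty[$. If not, let $\z^{*}>0$ denote the first zero of $v_0$; on $[0,\z^{*})$ one has $bv_0\geq 0$, so $\z^{\frac{1}{m-2}}v_0'$ is non-decreasing and vanishes at $\z=0$, forcing $v_0'\geq 0$ on $[0,\z^{*})$. This contradicts $v_0(0)=1>0=v_0(\z^{*})$, so no such first zero exists and consequently $v_0\geq 1$. Since $b(\z)v_0(\z)>0$ strictly for $\z>0$, the integration also gives $v_0'(\z)>0$ for every $\z>0$. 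The threshold $\z_{\s}=(\s/B)^{(m-2)/m}$ vanishes at $\s=0$, so Proposition~\ref{prop:alternative+infty}(1) applied at any $\z_1>0$ will yield $v_0(+\infty)=+\infty$.

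Next I would extend to small $\s>0$ by a continuity argument. Fix any $\z_1>0$ once and for all. By Proposition~\ref{prop:vreguliereens}, $\s\mapsto v_{\s}$ is continuous from $\R$ into $\mathcal{C}^1([0,\z_1])$. Because $v_0\geq 1$ on $[0,\z_1]$ and $v_0'(\z_1)>0$, there exists $\s^{*}>0$ such that for every $\s\in[0,\s^{*}]$ one has simultaneously $v_{\s}>1/2$ on $[0,\z_1]$, $v_{\s}'(\z_1)>v_0'(\z_1)/2>0$, and $\z_{\s}=(\s/B)^{(m-2)/m}<\z_1$. The point $\z_1$ then meets the hypotheses $\z_1\geq\z_{\s}$, $v_{\s}(\z_1)>0$, $v_{\s}'(\z_1)\geq 0$, so Proposition~\ref{prop:alternative+infty}(1) yields $v_{\s}(\z)\geq v_{\s}(\z_1)>0$ on $[\z_1,+\infty[$ and $v_{\s}(+\infty)=+\infty$; combined with $v_{\s}>0$ on $[0,\z_1]$ this closes the proof.

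The argument is essentially routine; there is no real obstacle. The one observation worth emphasizing is that at $\s=0$ the threshold $\z_{\s}$ collapses to $0$, so Proposition~\ref{prop:alternative+infty} applies immediately without having to cross any initial region where $\s>b$. Everything else rests on the quantitative continuity statement contained in Proposition~\ref{prop:vreguliereens}.
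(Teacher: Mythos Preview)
Your proof is correct and follows essentially the same approach as the paper's: both recast \eqref{eq:Elinzeta} in the divergence form $[\z^{1/(m-2)}v']'=\z^{1/(m-2)-1}(b-\s)v$, integrate at $\s=0$ using $v(0)=1$, $v'(0)=0$ to obtain positivity and $v(+\infty)=+\infty$, and then extend to small $\s>0$ by the $\mathcal{C}^1$ continuity of Proposition~\ref{prop:vreguliereens} together with $\z_{\s}\to 0^+$ and Proposition~\ref{prop:alternative+infty}(1). Your write-up is simply more explicit about the intermediate steps (the first-zero contradiction and the choice of $\z_1$), but there is no substantive difference.
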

\begin{proof}
Denoting by $\o{v}$ the solution for $\s=0$, \eqref{eq:Elinzeta} reads $[\z^{\frac{1}{m-2}}\o{v}']'=B\z^{\frac{3}{m-2}}\o{v}$. Using boundary conditions $\o{v}(0)=1$ and $\o{v}'(0)=0$ it is easy to integrate from $\z=0$ to $\z>0$ and see that $\o{v}(+\infty)=+\infty$. This easily extends by continuity to $\s>0$ small enough remarking that $\z_{\s}:=\left(\frac{\s}{B}\right)^{\frac{m-2}{m}}\rightarrow 0^+$ when $\s\rightarrow 0^+$ and using the first case in Proposition~\ref{prop:alternative+infty}.
\end{proof}
\begin{prop}
For $\s$ large enough there exists a time $\z_-=\z_-(\s)>0$ such that $v(\z_-)<0$.
\label{prop:v<0}
\end{prop}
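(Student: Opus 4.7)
Since $v_\s'(0) = -(m-2)\s \to -\infty$ while $v_\s(0) = 1$, the function $v_\s$ should cross zero on an interval of length $\mathcal{O}(1/\s)$. I would expose this by rescaling $\tau := \s\z$ and setting $V_\s(\tau) := v_\s(\tau/\s)$. Plugging into \eqref{eq:Elinzeta} and dividing by $\s$ yields
\begin{equation*}
-\tau V_\s''(\tau) - \frac{V_\s'(\tau)}{m-2} + B\,\s^{-\frac{2(m-1)}{m-2}}\tau^{\frac{m}{m-2}} V_\s(\tau) = V_\s(\tau),\qquad V_\s(0)=1,\ V_\s'(0)=-(m-2),
\end{equation*}
in which the $\s$-dependent coefficient vanishes as $\s\to+\infty$. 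Formally $V_\s \to V_\infty$ where
\[
\tau V_\infty''+\frac{V_\infty'}{m-2}+V_\infty=0,\qquad V_\infty(0)=1,\ V_\infty'(0)=-(m-2).
\]

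The first step is to establish a regular solution $V_\infty\in\mathcal{C}^1([0,+\infty[)\cap\mathcal{C}^2(]0,+\infty[)$ to this limit problem, together with uniform convergence $V_\s \to V_\infty$ on any fixed compact $[0,T]$ as $\s\to+\infty$. Both follow by reproducing the Frobenius plus Banach fixed point scheme of Proposition~\ref{prop:vreguliereens}: since the $\s$-dependence enters only through a small zero-th order perturbation continuously parametrized by $\s^{-2(m-1)/(m-2)}$, applying the same fixed point argument uniformly in this parameter handles the first-kind singularity at $\tau=0$ and the convergence $\s\to+\infty$ simultaneously.

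The second step shows that $V_\infty$ changes sign. Writing the limit equation in divergence form as $(\tau^\alpha V_\infty')'= -\tau^{\alpha-1}V_\infty$ with $\alpha=\frac{1}{m-2}\in(0,1)$ (using $m>3$), I would argue by contradiction and suppose $V_\infty>0$ on $[0,+\infty[$. Integrating from $0$ (where $\tau^\alpha V_\infty'(\tau)\to 0$ thanks to $\alpha>0$ and boundedness of $V_\infty'$) gives $\tau^\alpha V_\infty'(\tau)=-\int_0^\tau s^{\alpha-1}V_\infty(s)\,ds<0$, so $V_\infty$ is strictly decreasing and hence $V_\infty\geq\delta:=V_\infty(1)>0$ on $[0,1]$. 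Therefore $\tau^\alpha V_\infty'(\tau)\leq -\delta/\alpha$ on $[1,+\infty[$, that is, $V_\infty'(\tau)\leq -\frac{\delta}{\alpha}\tau^{-\alpha}$, whose antiderivative $\propto -\tau^{1-\alpha}$ diverges since $\alpha<1$. This forces $V_\infty\to-\infty$, contradicting positivity, so there exists $T_0>0$ with $V_\infty(T_0)<0$.

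The conclusion is then immediate: by uniform convergence on $[0,T_0]$ we have $V_\s(T_0)<0$ for $\s$ large enough, that is, $v_\s(T_0/\s)<0$, and one sets $\z_-(\s):=T_0/\s$. The main technical obstacle is the uniform convergence $V_\s\to V_\infty$ across the singular point $\tau=0$; but because the initial data $V_\s(0),V_\s'(0)$ are fixed independently of $\s$ and the only $\s$-dependence is through a zero-th order term vanishing uniformly on compacts, this reduces to a parametrized version of the argument already carried out in Proposition~\ref{prop:vreguliereens}.
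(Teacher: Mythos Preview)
Your approach coincides with the paper's: the same rescaling $\tau=\s\z$, the same limit equation $\tau V_\infty''+\alpha V_\infty'+V_\infty=0$ (the paper writes $y$ for $V_\infty$ and $\beta=B\s^{-2(m-1)/(m-2)}$ for the vanishing coefficient), and the same continuity-in-$\beta$ argument to transfer the sign change back to finite $\s$. The one genuine difference is how you establish that the limit solution becomes negative. The paper asserts, via a power-series computation it does not display, that $y_\alpha(2\alpha)<0$ for every $\alpha\in]0,1[$. Your argument is more structural: writing the limit equation in divergence form $(\tau^{\alpha}V_\infty')'=-\tau^{\alpha-1}V_\infty$ and integrating from $0$, you show that positivity of $V_\infty$ would force $V_\infty'(\tau)\leq -C\tau^{-\alpha}$, whose primitive diverges because $\alpha=\frac{1}{m-2}<1$. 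This is cleaner and self-contained, and it makes explicit where the hypothesis $m>3$ enters (the paper hides it inside the claim $\alpha\in]0,1[$). Both arguments are correct; yours trades a concrete zero location ($\tau=2\alpha$) for a soft contradiction, which is all that is needed here.
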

\begin{proof}
Scaling \eqref{eq:Elinzeta} as $t=\s\z$ and $y(t)=v\left(\frac{t}{\s}\right)$ yields
$$
t\ddot{y}+\alpha\dot{y}+y=\beta t^{\frac{m}{m-2}}y, \hspace{.3cm}y(0)=1,\hspace{.3cm}\dot{y}(0)=-\frac{1}{\alpha}
$$
with $\dot{y}=\frac{dy}{dt}$, $\alpha=\frac{1}{m-2}\in]0,1[$ and $\beta=\frac{B}{\s^{2\frac{m-1}{m-2}}}$. When $\beta=0$ we may look for the solution $y_{\alpha}(t)$ in the form of a power series, and an easy but technical computation shows that this power series takes negative values $y_{\alpha}(t=2\alpha)<0$ for any fixed $\alpha\in]0,1[$. This easily extends by continuity to $\beta>0$ small, i-e to $\s$ large.
\end{proof}
We may now prove our main statement:
\begin{proof}(of Theorem \ref{theo:vpprincipale}). By Propositions \ref{prop:sigmapetit} and \ref{prop:v<0} the quantity
\begin{equation}
\s_{0}=:\sup_{\s\geq 0}\Big{(}\s\geq 0,\hspace{.5cm} \s'\in[0,\s]\Rightarrow v_{\s'}(.)>0\Big{)}
\label{eq:defsigma0}
\end{equation}
is finite and positive. We claim that $\s_0$ is indeed the desired principal eigenvalue: denoting by $v_0$ the corresponding solution for $\s=\s_0$ we have by continuity that $v_0=\displaystyle{\lim_{\s\to\s_0^-}\,v_{\s}}\geq 0$, and actually $v_0>0$ (unless $v_0\equiv 0$, which contradicts $v_0(0)=1$). Stability $v_0(+\infty)=0$ is a consequence of Proposition~\ref{prop:alternative+infty}: the scenario $v_0(+\infty)=+\infty$ is impossible since otherwise there would exist a right neighbourhood of $\s_0$ in which $v_{\s}>0$, thus contradicting the definition of $\s_0$. In order to obtain monotonicity, let $\z_0:=\left(\frac{\s_0}{B}\right)^{\frac{m-2}{m}}$: by Proposition~\ref{prop:alternative+infty} $v_0>0$ must satisfy $v_0'<0$ on $[\z_0,+\infty[$ (otherwise $v_0(+\infty)=+\infty$, which we proved to be impossible). On $[0,\z_0]$ monotonicity is a consequence of $v_0'(0)=-(m-2)\s_0<0$ and $Lv_0=(\s_0-b)v_0\leq 0$, where $L=-\z\frac{d^2}{d\z^2}-\frac{1}{m-2}\frac{d}{d\z}$ is elliptic and has no zero-th order coefficient.
 \end{proof}
%
%
\subsection{Isolated eigenvalue}
We prove here that there are no other eigenvalues close to $\s_0$. More precisely we show that $\s\in[0,\s_0[\Rightarrow v(+\infty)=+\infty$ and that for $\s>\s_0$ close enough $v(+\infty)=-\infty$ holds. The key point is the following monotonicity:
\begin{prop}
For $\s\in[0,\s_0]$, the mapping $\s\mapsto v_{\s}(.)$ is strictly pointwise decreasing on $]0,+\infty[$.
\label{prop:vdecroissantes}
\end{prop}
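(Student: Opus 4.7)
The plan is to prove strict monotonicity by a Wronskian comparison. Fix $0 \leq \s_1 < \s_2 \leq \s_0$ and set $v_i := v_{\s_i}$. By the definition of $\s_0$ in \eqref{eq:defsigma0} and Theorem~\ref{theo:vpprincipale} (which handles the endpoint $\s = \s_0$), both $v_1$ and $v_2$ are strictly positive on $[0,+\infty[$. Define the Wronskian
$$
W(\z) := v_1(\z) v_2'(\z) - v_1'(\z) v_2(\z).
$$
Writing the ODE \eqref{eq:Elinzeta} for $v_1$ and $v_2$, multiplying by $v_2$ and $v_1$ respectively, and subtracting kills the $b(\z)$ term and yields
$$
\z W' + \frac{W}{m-2} = -(\s_2 - \s_1)\, v_1 v_2.
$$

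The next step is to put this equation in conservative form via the integrating factor $\z^{\frac{1}{m-2}}$, giving
$$
\frac{d}{d\z}\bigl(\z^{\frac{1}{m-2}} W(\z)\bigr) = -(\s_2-\s_1)\,\z^{\frac{1}{m-2}-1} v_1(\z) v_2(\z).
$$
Since $v_i \in \mathcal{C}^1([0,+\infty[)$ with $v_i(0) = 1$, $W(0) = -(m-2)(\s_2 - \s_1)$ is finite, hence $\z^{\frac{1}{m-2}} W(\z) \to 0$ as $\z \to 0^+$ (because $\frac{1}{m-2} > 0$). Moreover the right-hand side is integrable at $0^+$ since $v_1 v_2 \to 1$ and $\frac{1}{m-2} - 1 > -1$. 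Integrating from $0$ to $\z > 0$ therefore gives
$$
\z^{\frac{1}{m-2}} W(\z) = -(\s_2-\s_1)\int_0^{\z} t^{\frac{1}{m-2}-1} v_1(t) v_2(t)\,dt < 0,
$$
so $W(\z) < 0$ strictly on $]0,+\infty[$.

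Finally, since $v_1 > 0$,
$$
\left(\frac{v_2}{v_1}\right)'(\z) = \frac{W(\z)}{v_1(\z)^2} < 0 \qquad \text{on } ]0,+\infty[,
$$
and $v_2(0)/v_1(0) = 1$, so $v_2(\z) < v_1(\z)$ strictly for all $\z > 0$, which is exactly the claimed pointwise monotonicity. The argument is essentially self-contained and I do not foresee a real obstacle: the only subtle point is controlling the Wronskian near the singular endpoint $\z = 0$, but this is handled cleanly by the fact that Proposition~\ref{prop:vreguliereens} provides $\mathcal{C}^1$ regularity with explicit values of $v_\s(0)$ and $v'_\s(0)$, so the weighted Wronskian vanishes at $0^+$ and the conservative integration is unambiguous.
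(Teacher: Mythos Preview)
Your proof is correct. It differs from the paper's only in execution, not in substance: the paper also looks at the ratio $\alpha=v_{\s_1}/v_{\s_2}$, but instead of passing through the Wronskian it derives the second-order equation
\[
-\z v_2 \alpha''-\Bigl(2\z v_2'+\tfrac{v_2}{m-2}\Bigr)\alpha'=(\s_1-\s_2)v_1>0
\]
(with the roles of $\s_1,\s_2$ swapped relative to yours) and then invokes the classical Minimum Principle to rule out $\alpha$ attaining an interior minimum. Your Wronskian route is essentially the first-integral version of the same comparison: writing $W=v_1^2(v_2/v_1)'$, your integrating-factor identity $\bigl(\z^{\frac{1}{m-2}}W\bigr)'=-(\s_2-\s_1)\z^{\frac{1}{m-2}-1}v_1v_2$ is what one obtains after one integration of the ratio equation. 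The advantage of your formulation is that the singular endpoint $\z=0$ is handled by a clean integrability check rather than by appealing to a maximum principle on a degenerate operator; the paper's version is slightly shorter but relies on the reader accepting the minimum principle near the singularity. Both are standard Sturm-type arguments and reach the same conclusion.
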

\begin{proof}
Remark that $\z=0$ is irrelevant since we normalized $v_{\s}(0)=1$ for all $\s$. The proof is quite standard: if $\s_1>\s_2\in[0,\s_0]$ then $v_1,v_2>0$, $\alpha :=\frac{v_1}{v_2}>0$ is well defined and satisfies
$$
\tilde{L}[\alpha]:=-\z v_2 \alpha''-\left(2\z v_2'+\frac{v_2}{m-2}\right)\alpha'=(\s_1-\s_2)v_1>0.
$$
Computing $\alpha(0)=1$ and $\alpha'(0)=(m-2)(\s_2-\s_1)<0$ shows that $\alpha<1$ at least in the neighbourhood of $\z=0$. If $\alpha(\z^*)\geq 1$ for some $\z^*>0$ then $\alpha$ would attain a non-negative local minimum point $\z_m\in]0,\z^*[$, which would contradict the classical Minimum Principle. Thus $\alpha< 1\Leftrightarrow v_1< v_2$ on $]0,+\infty[$.
\end{proof}
\begin{prop}
For $\s\in[0,\s_0[$ we have that $v_{\s}(+\infty)=+\infty$.
\label{prop:v(+infty)=+infty}
\end{prop}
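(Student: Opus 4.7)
The strategy is a proof by contradiction based on a Wronskian (Lagrange) identity between $v_\s$ and $v_{\s_0}$. Suppose there exists $\s_1\in[0,\s_0[$ with $v_{\s_1}(+\infty)=0$: definition \eqref{eq:defsigma0} of $\s_0$ forces $v_{\s_1}>0$ on $[0,+\infty[$, and Theorem~\ref{theo:vpprincipale} already provides a positive $v_{\s_0}$ with $v_{\s_0}(+\infty)=0$. We would then have two strictly positive eigenfunctions of the singular Sturm-Liouville problem \eqref{eq:Elinzeta} for two distinct values $\s_1\neq\s_0$ of the spectral parameter, and the plan is to rule this out by a classical orthogonality argument.

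To implement it, set $W(\z):=v_{\s_1}v'_{\s_0}-v_{\s_0}v'_{\s_1}$, multiply \eqref{eq:Elinzeta} written for $\s=\s_0$ by $v_{\s_1}$, subtract the analogous identity with the indices swapped, and use the integrating factor $\mu(\z)=\z^{(3-m)/(m-2)}$ which turns the first order remainder into a total derivative. A short computation yields
\[
\bigl(\z^{1/(m-2)}W\bigr)'=(\s_1-\s_0)\,\z^{(3-m)/(m-2)}\,v_{\s_0}\,v_{\s_1}.
\]
Integrating from $0$ to $+\infty$, the left-hand side reduces to boundary terms and the right-hand side is a strictly negative constant times a strictly positive integral; reaching $0=\text{(strictly negative)}$ will produce the contradiction and close the argument.

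It then remains to justify the vanishing of both boundary terms and the convergence of the right-hand side. Near $\z=0^+$, Proposition~\ref{prop:vreguliereens} gives $v_{\s_i}(0)=1$ and $v'_{\s_i}(0)=-(m-2)\s_i$, so $W$ stays bounded, the prefactor $\z^{1/(m-2)}$ vanishes, and the weight $\z^{(3-m)/(m-2)}$ is locally integrable since its exponent always exceeds $-1$. The genuinely technical step is the behaviour at infinity: by Remark~\ref{rmk:eqasymptotiquezeta} the assumption $v_{\s_i}(+\infty)=0$ forces each $v_{\s_i}$ to match the (one-dimensional) stable branch of the asymptotic equation $-v''+B\z^{2/(m-2)}v=0$, and a standard WKB or super/sub-solution analysis of that equation yields $|v_{\s_i}|+|v'_{\s_i}|\leq C\exp\bigl(-\kappa\z^{(m-1)/(m-2)}\bigr)$ for some $\kappa>0$. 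This estimate simultaneously kills the boundary term $\z^{1/(m-2)}W\to 0$ at $+\infty$ and guarantees absolute convergence of the weighted integral. Once this exponential decay is in hand, everything else is routine Sturm-Liouville bookkeeping.
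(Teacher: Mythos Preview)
Your argument is correct and complete up to the decay estimate at infinity, which you rightly identify as the one genuinely technical step. The Lagrange identity you derive,
\[
\bigl(\z^{1/(m-2)}W\bigr)'=(\s_1-\s_0)\,\z^{(3-m)/(m-2)}\,v_{\s_0}\,v_{\s_1},
\]
is exactly the self-adjoint form of \eqref{eq:Elinzeta} (compare the divergence form appearing in the proof of Proposition~\ref{prop:s0vpisolee}), and your boundary bookkeeping at $\z=0$ and $\z=+\infty$ is accurate. One small point: you should state explicitly at the outset that positivity $v_{\s_1}>0$ together with the trichotomy of Proposition~\ref{prop:alternative+infty} reduces the task to excluding the single alternative $v_{\s_1}(+\infty)=0$; you use this implicitly but it deserves a sentence.

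The paper's route is different. Rather than a Wronskian orthogonality argument, it uses Proposition~\ref{prop:vdecroissantes} to get $v_{\s}>v_{\s_0}>0$, and then, assuming $v_{\s}(+\infty)=0$, runs a sliding argument: scale $\alpha v_{\s}$ up from $\alpha=0$ until it first touches $v_{\s_0}$ from below at some interior point $\z_m$, and derive a contradiction with the Minimum Principle applied to $z=\alpha^*v_{\s}-v_{\s_0}\leq 0$ and the operator $L=-\z\frac{d^2}{d\z^2}-\frac{1}{m-2}\frac{d}{d\z}+b$. Both approaches need some control at infinity: you need the WKB-type exponential decay to kill the boundary term, while the paper needs to know that the two decaying solutions are comparable (``behave similarly'') so that the sliding stops at a finite $\alpha^*<1$ with an interior contact point. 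Your approach is arguably more transparent---it is pure integration by parts and sign-counting, and it does not rely on the preceding monotonicity Proposition~\ref{prop:vdecroissantes}---at the price of invoking the sharper quantitative decay estimate. The paper's approach stays closer to the maximum-principle toolbox used elsewhere in the section and recycles Proposition~\ref{prop:vdecroissantes}.
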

\begin{proof}
For such $\s$ we have by monotonicity $v_{\s}> v_{\s_0}>0$ hence by Proposition~\ref{prop:alternative+infty} either $v_{\s}(+\infty)=+\infty$, either $v_{\s}(+\infty)=0$. Assume by contradiction that $v_{\s}(+\infty)=0$ for some $\s\in]0,\s_0[$: both $v_{\s}$ and $v_{\s_0}$ then satisfy the same asymptotic equation when $\z\to+\infty$ and thus behave similarly. Slowly increasing $\alpha$ from zero, we see that $\alpha v_\s<v_{\s_0}$ holds until a first critical value $\alpha=\alpha^*\in]0,1[$ for which there exists a contact point $\z_m$ between $\alpha^*v_\s\leq v_{\s_0}$ and $v_{\s_0}$. On the other hand $z:=\alpha^*v_{\s}-v_{\s_0}\leq 0$ satisfies $Lz=\underbrace{(\s-\s_0)}_{\leq 0}\alpha^* v_{\s}+\s_0 \underbrace{z}_{\leq 0}\leq 0$ with $L=-\z\frac{d^2}{d\z^2}-\frac{1}{m-2}\frac{d}{d\z}+b(\z)$, thus contradicting the Minimum Principle.
\end{proof}
\begin{rmk}
In the proof above we only used the fact that $\s_0$ is associated with a (stable) positive eigenfunction $v_{\s_0}$. As a classical byproduct we retrieve uniqueness of the principal eigenvalue $\s_0$: by Proposition~\ref{prop:v(+infty)=+infty} there are no eigenvalues at all for $\s<\s_0$, and if there existed an other principal eigenvalue $\s_1>\s_0$ associated with a positive eigenfunction $v_{\s_1}$ we could repeat exactly the same argument and conclude that $v_{\s}>v_{\s_1}$ and $v_{\s}(+\infty)=+\infty$ for all $\s<\s_1$, which fails for $\s=\s_0<\s_1$.
\label{rmk:vp_unique}
\end{rmk}
\begin{lem}
If $\s>\s_0$ then $v_{\s}\geq 0$ cannot hold on $\R^+$.
\label{lem:sturm_negative}
\end{lem}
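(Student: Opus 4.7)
The plan is to argue by contradiction. Fix $\s > \s_0$ and suppose $v_\s \geq 0$ on $\R^+$. Since $v_\s(0) = 1 > 0$, a first zero $\z_1 > 0$ of $v_\s$ would be a nonnegative local minimum, forcing $v_\s'(\z_1) = 0$; Cauchy uniqueness applied to \eqref{eq:Elinzeta}, which is a regular linear second-order ODE at every $\z > 0$, would then yield $v_\s \equiv 0$, contradicting $v_\s(0) = 1$. Hence $v_\s > 0$ throughout $[0,+\infty[$.

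I would then compare $v_\s$ and $v_{\s_0}$ via a Sturm-type Wronskian, starting from the self-adjoint form $(\z^{1/(m-2)} v')' = (b(\z) - \s)\,\z^{-(1 - 1/(m-2))}\, v$ already used in the proof of Proposition~\ref{prop:alternative+infty}. Setting
\[
W(\z) := \z^{1/(m-2)} \bigl(v_\s(\z) v_{\s_0}'(\z) - v_{\s_0}(\z) v_\s'(\z)\bigr),
\]
a direct computation yields
\[
W'(\z) = \frac{\s - \s_0}{\z^{1 - 1/(m-2)}} \, v_\s(\z) \, v_{\s_0}(\z).
\]
The bracket in $W$ stays bounded as $\z \to 0^+$ by the $\mathcal{C}^1$-regularity of both eigenfunctions (Proposition~\ref{prop:vreguliereens}), while $\z^{1/(m-2)} \to 0$, so $W(0) = 0$. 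Since $1 - 1/(m-2) \in (0, 1)$ for $m > 3$, the right-hand side is integrable near $0$, and positivity of $v_\s$ and $v_{\s_0}$ yields $W(\z) > 0$ for every $\z > 0$.

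To conclude, the identity $(v_\s/v_{\s_0})' = -W/(\z^{1/(m-2)} v_{\s_0}^2) < 0$ on $]0,+\infty[$, combined with the common normalization $v_\s(0)/v_{\s_0}(0) = 1$, gives $v_\s < v_{\s_0}$ on $]0,+\infty[$. Since $v_{\s_0}(+\infty) = 0$ (Theorem~\ref{theo:vpprincipale}) and $v_\s \geq 0$, this forces $v_\s(+\infty) = 0$. But then $\s > \s_0$ would be a second principal eigenvalue of \eqref{eq:Elinzeta} with positive eigenfunction, contradicting the uniqueness statement of Remark~\ref{rmk:vp_unique}.

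The main delicate point is the analysis at the singular endpoint $\z = 0$: the vanishing $W(0) = 0$ is what makes the Sturm comparison work, and it hinges both on the $\mathcal{C}^1$-regularity of $v_\s, v_{\s_0}$ up to the origin and on the shared normalization $v_\s(0) = v_{\s_0}(0) = 1$. Without this cancellation, an unwanted boundary contribution at $0$ would spoil the sign of $W$ and prevent the comparison $v_\s < v_{\s_0}$ which drives the whole argument.
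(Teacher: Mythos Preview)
Your proof is correct. Both your argument and the paper's ultimately reduce the contradiction to Remark~\ref{rmk:vp_unique}, but the route is different. The paper first invokes Proposition~\ref{prop:alternative+infty} to split into the two possible scenarios $v_{\s}(+\infty)=0$ and $v_{\s}(+\infty)=+\infty$; the first is dismissed at once by Remark~\ref{rmk:vp_unique}, and the second is ruled out by showing that the quotient $\alpha=v_{\s}/v_{\s_0}$ (which satisfies $\alpha'(0)<0$ and $\alpha(+\infty)=+\infty$) would attain an interior nonnegative minimum, contradicting the elliptic minimum principle for $\tilde{L}\alpha=(\s-\s_0)v_{\s}\geq 0$. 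You bypass the case split entirely: your Wronskian identity is precisely the first integral of the equation for $\alpha$, and the boundary analysis $W(0)=0$ lets you prove directly that $\alpha=v_{\s}/v_{\s_0}$ is strictly decreasing, hence $v_{\s}<v_{\s_0}$ everywhere, forcing $v_{\s}(+\infty)=0$ and the contradiction with Remark~\ref{rmk:vp_unique}. Your approach is slightly more streamlined (one argument instead of two cases); the paper's has the advantage of not requiring the integrability check of $W'$ near the singular endpoint, since the minimum principle is applied away from $\z=0$. Substantively the two are variations on the same Sturm comparison.
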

\begin{proof}
Assume by contradiction that $v_{\s}\geq 0$ for some $\s>\s_0$. Once again we must have either $v_{\s}(+\infty)=0$, either $v_{\s}(+\infty)=+\infty$. Owing to Remark~\ref{rmk:vp_unique} above the first case is impossible, and therefore $v_{\s}(+\infty)=+\infty$. Defining $\alpha:=\frac{v_{\s}}{v_{\s_0}}$ we have $\alpha\geq 0$, $\alpha'(0)=(m-2)(\s_0-\s)<0$ and $\alpha(+\infty)=+\infty$. As a consequence $\alpha$ attains a (non-negative) minimum point at some $\z_m>0$, thus contradicting the classical Minimum Principle and
$$
\tilde{L}[\alpha]:=-\z v_0 \alpha''-\left(2\z v_0'+\frac{v_0}{m-2}\right)\alpha'=(\s-\s_0)v_0>0.
$$ \end{proof}
\begin{prop}
For $\s>\s_0$ close enough there holds $v_{\s}(+\infty)=-\infty$.
\label{prop:v(+infty)=-infty}
\end{prop}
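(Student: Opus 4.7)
The plan is to use the trichotomy from Proposition~\ref{prop:alternative+infty}: since only $v_{\s}(+\infty)\in\{+\infty,0,-\infty\}$ is possible, it suffices to rule out the first two cases. The intermediate case $v_{\s}(+\infty)=0$ is already excluded by Remark~\ref{rmk:vp_unique}, which established uniqueness of the principal eigenvalue $\s_0$. The main task is therefore to exhibit a point $\z_1>\z_{\s}$ at which $v_{\s}(\z_1)<0$ and $v_{\s}'(\z_1)\leq 0$, so that case~2 of Proposition~\ref{prop:alternative+infty} forces $v_{\s}(+\infty)=-\infty$.

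First I would fix a large $Z>\z_{\s_0}$ and use continuity of $\s\mapsto v_{\s}$ in $\mathcal{C}^1([0,Z])$. Proposition~\ref{prop:vreguliereens} gives this continuity on $[0,\o{\z}]$, and it extends to $[0,Z]$ by classical continuous dependence of regular Cauchy solutions on parameters past the singularity at $\z=0$. Since $v_{\s_0}$ is strictly positive and monotone decreasing on $[0,+\infty[$ by Theorem~\ref{theo:vpprincipale}, we have $v_{\s_0}\geq v_{\s_0}(Z)>0$ uniformly on $[0,Z]$. Hence for $\s>\s_0$ sufficiently close to $\s_0$, $v_{\s}$ remains strictly positive throughout $[0,Z]$. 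On the other hand Lemma~\ref{lem:sturm_negative} forces $v_{\s}$ to take negative values somewhere, so its first zero $\z_1$ lies past $Z$. Since $\z_{\s}=(\s/B)^{(m-2)/m}$ depends continuously on $\s$ and $\z_{\s_0}<Z$, one also has $\z_{\s}<Z<\z_1$ after further shrinking the neighbourhood of $\s_0$.

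At this first zero we get $v_{\s}(\z_1)=0$ and $v_{\s}'(\z_1)\leq 0$ by positivity just to the left. In fact $v_{\s}'(\z_1)<0$ strictly: otherwise \eqref{eq:Elinzeta} evaluated at $\z_1$ would force $v_{\s}''(\z_1)=0$, and Cauchy uniqueness for the linear regular ODE near $\z_1>0$ would yield $v_{\s}\equiv 0$, contradicting $v_{\s}(0)=1$. By $\mathcal{C}^1$ continuity there exists $\z_2>\z_1>\z_{\s}$ with both $v_{\s}(\z_2)<0$ and $v_{\s}'(\z_2)<0$, so case~2 of Proposition~\ref{prop:alternative+infty} applied at $\z_2$ concludes $v_{\s}(+\infty)=-\infty$.

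The only real subtlety I expect is the ordering of quantifiers between ``$\s$ close to $\s_0$'' and ``$Z$ large enough''. I must first fix $Z>\z_{\s_0}$ large enough that $v_{\s_0}(Z)>0$ and that $\z_\s<Z$ holds throughout some neighbourhood of $\s_0$; only then can I shrink that neighbourhood so that the $\mathcal{C}^1([0,Z])$-closeness of $v_\s$ to $v_{\s_0}$ keeps $v_\s$ strictly positive on all of $[0,Z]$. Once this order is respected, everything else is a direct application of Proposition~\ref{prop:alternative+infty}, Lemma~\ref{lem:sturm_negative} and Remark~\ref{rmk:vp_unique}.
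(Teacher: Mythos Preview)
Your proof is correct and follows essentially the same route as the paper: both use Lemma~\ref{lem:sturm_negative} to locate the first zero $\z_1$ of $v_{\s}$, continuity in $\s$ (together with $v_{\s_0}>0$) to push $\z_1$ beyond $\z_{\s}$, strict negativity of $v_{\s}'(\z_1)$ by Cauchy uniqueness, and then case~2 of Proposition~\ref{prop:alternative+infty} just past $\z_1$. Your separate exclusion of $v_{\s}(+\infty)=0$ via Remark~\ref{rmk:vp_unique} is harmless but redundant, and note that Proposition~\ref{prop:vreguliereens} already gives $\mathcal{C}^1$-continuity on any compact $[0,Z]$, so no extension step is needed.
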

\begin{proof}
By Proposition~\ref{prop:alternative+infty} it is enough to show that for such $\s$ there exists a time $\z>\z_{\s}=\left(\frac{\s}{B}\right)^{\frac{m-2}{m}}$ such that $v_{\s}(\z)<0$ and $v_{\s}'(\z)<0$. For any $\s>\s_0$ we normalized $v_{\s}(0)=1>0$, and by Lemma~\ref{lem:sturm_negative} $v_{\s}(.)$ takes negative values at least somewhere in $\R^+$: the first time $\z_{0}(\s)>0$ where $v_{\s}$ vanishes is hence well defined. Since $v_{\s_0}>0$ we must have by continuity $\z_{0}(\s)\rightarrow +\infty$ when $\s\rightarrow \s_0^+$, and in particular $\z_{0}(\s)>\z_{\s}$ since $\z_{\s}\to \left(\frac{\s_0}{B}\right)^{\frac{m-2}{m}}<+\infty$. By definition of $\z_{0}$ we also have $v_{\s}>0$ on $[0,\z_{0}[$ and $v_{\s}(\z_0)=0$ hence $v_{\s}'(\z_{0})\leq 0$, and actually $v_{\s}'(\z_{0})< 0$. Thus the desired behaviour for times $\z\approx \z_0(\s)^+$ and $\s>\s_0$ close. \end{proof}
%
%
\subsection{Analyticity}
\label{subsection:analyticite}
For $\s>0$ equation \eqref{eq:Elinzeta} has a left branch of solutions $v_l(\s,.)$ which are $\mathcal{C}^1$ at $\z=0$ and normalized as $v_l(\s,0)=1$ (see Proposition~\ref{prop:vreguliereens}). According to Remark~\ref{rmk:eqasymptotiquezeta} there also exists for any $\s$ a right branch of stable solutions $v_r(\s,+\infty)=0$, which we normalize as $v_r(\s,\z_0)=1$ for some $\z_0>0$. In this section we show that these two branches are analytical in $\s$, which we will need later for technical reasons.
\begin{prop}
For any fixed $\z_0>0$ there exists $R(\z_0)>0$ such that $\s\mapsto v_l(\s,.)$ is analytical into $\mathcal{C}^1([0,\z_0])$ with radius of convergence at least $R(\z_0)$.
\label{prop:vganalytique}
\end{prop}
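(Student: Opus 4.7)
My approach is to track the $\s$-dependence through the two-step construction of $v_l(\s,\cdot)$ in Proposition~\ref{prop:vreguliereens}: first obtain analyticity on a neighbourhood $[0,\bar\z]$ of the regular singular point $\z=0$ via a Banach fixed-point whose Neumann series is analytic in $\s$, then propagate to $[\bar\z,\z_0]$ by the classical analytic dependence of (regular) ODE flows on parameters.

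Step 1 (near $\z=0$). Split $v_l = w + h$ as in the proof of Proposition~\ref{prop:vreguliereens}. The piece $w$ solving $\z w'' + w'/(m-2) + \s w = 0$ with $w(0)=1, w'(0)=-(m-2)\s$ admits the explicit Frobenius expansion $w(\s,\z) = \sum_{n\geq 0} a_n(\s) \z^n$ with recursion
\[
a_{k+1}(\s) = -\frac{\s \,a_k(\s)}{(k+1)[k + 1/(m-2)]}, \qquad a_0 = 1,
\]
so that $a_n$ is a polynomial of degree $n$ in $\s$ with $|a_n(\s)| \leq C^n |\s|^n / (n!)^2$. Hence $\s \mapsto w(\s,\cdot)$ is entire into $\mathcal{C}^1([0,\bar\z])$ for any $\bar\z > 0$. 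The correction $h$ satisfies $L_0 h = b(w+h) - \s h$ with $h(0)=h'(0)=0$, where $L_0 := \z \partial_{\z\z} + \partial_\z/(m-2)$. Inverting $L_0$ by the Duhamel formula on functions vanishing at $\z=0$ recasts the equation as a linear fixed point $h = B_\s + A_\s h$, where $B_\s$ (the Duhamel image of $bw$) and $A_\s$ (the integral operator with coefficient $b-\s$) depend entirely on $\s$. For $\bar\z$ small enough, $\|A_\s\|_{\mathcal{L}(\mathcal{C}^1([0,\bar\z]))} \leq 1/2$ uniformly for $|\s| \leq R_1$, and the Neumann series
\[
h(\s,\cdot) = \sum_{n\geq 0} A_\s^n B_\s
\]
converges absolutely in $\mathcal{C}^1([0,\bar\z])$ and is analytic in $\s$ on the disk $|\s| < R_1$.

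Step 2 (propagation to $[0,\z_0]$). On $[\bar\z, \z_0]$ the ODE~\eqref{eq:Elinzeta} has polynomial coefficients in $\s$ and is non-singular. The classical analytic dependence of Cauchy flows on initial data and parameters, combined with the $\s$-analytic Cauchy data $(v_l(\s,\bar\z), v_l'(\s,\bar\z))$ from Step 1, yields analyticity of $v_l(\s,\cdot)$ into $\mathcal{C}^1([\bar\z,\z_0])$ on some disk $|\s| < R_2(\z_0) > 0$ (possibly shrinking with $\z_0$ but strictly positive). Gluing, $\s \mapsto v_l(\s,\cdot)$ is analytic into $\mathcal{C}^1([0,\z_0])$ with radius at least $R(\z_0) := \min(R_1, R_2(\z_0)) > 0$; the uniqueness part of Proposition~\ref{prop:vreguliereens} identifies the limit with $v_l(\s,\cdot)$.

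Main obstacle. The delicate point is the $\mathcal{C}^1$-contraction estimate for $A_\s$ up to $\z = 0$. The two Frobenius branches of the homogeneous operator $L_0$ behave there as $1$ and $\z^{(m-3)/(m-2)}$, the latter having unbounded derivative, so the Duhamel kernel is singular at the origin. Boundedness of $A_\s$ on $\mathcal{C}^1$ is rescued by the high-order vanishing $b(\z) = B\z^{m/(m-2)}$ (using $m>3$) combined with the \emph{a priori} condition $h(0)=0$: these cancellations allow $(A_\s h)'(\z)$ to extend continuously up to $\z=0$ with an operator norm controlled by a positive power of $\bar\z$, which is precisely what lets one choose $\bar\z$ small enough to close the Neumann series.
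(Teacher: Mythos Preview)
Your proof is correct, but the route differs from the paper's. The paper does \emph{not} revisit the two-step construction of Proposition~\ref{prop:vreguliereens}; instead it expands directly in powers of $(\s-\s_0)$: writing $Lv:=-\z v''-\frac{v'}{m-2}+(b-\s_0)v=(\s-\s_0)v$ and seeking $v=\sum_{n\geq 0}(\s-\s_0)^n v_n$, one is led to $Lv_0=0$, $Lv_n=v_{n-1}$ with $v_0=v_{\s_0}$ and $v_n(0)=0$. A single Duhamel formula built on the positive eigenfunction $v_{\s_0}$ inverts $L$ in $\mathcal{C}^1([0,\z_0])$ with bound $C(\z_0)$, whence $\|v_n\|_{\mathcal{C}^1}\leq C(\z_0)^n\|v_0\|_\infty$ and radius $R(\z_0)=1/C(\z_0)$.

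The trade-offs: the paper's argument is shorter and self-contained, relying only on one explicit Green kernel and the already-constructed principal eigenfunction; it is naturally centered at $\s_0$, which is all that Section~\ref{section:raccord} needs. Your approach works harder near $\z=0$ (Frobenius~$+$~contraction) but then exploits a fact the paper leaves implicit: on $[\bar\z,\z_0]$ the ODE is \emph{linear} with affine dependence on $\s$, so the flow is in fact entire in $\s$. Had you pushed this, your $R_2(\z_0)$ would equal $R_1$ and your radius would be independent of $\z_0$~--- indeed, since $R_1$ can be taken arbitrarily large, your argument actually shows $\s\mapsto v_l(\s,\cdot)$ is entire into $\mathcal{C}^1([0,\z_0])$, which is strictly stronger than what the paper states or proves.
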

\begin{proof}
If no singularity occurred at $\z=0$ in \eqref{eq:Elinzeta} this would be a classical consequence of the linear dependence of the equation on $\s$. We construct below a solution $v\in\mathcal{C}^1([0,\z_0])$ of \eqref{eq:Elinzeta} which is analytical in $\s$ and such that $v(0)=1$. By uniqueness in Proposition~\ref{prop:vreguliereens} this solution will agree with $v_l(\s,.)$.
\par
For $\s\approx \s_0$ let us enlighten $\s-\s_0$ and rewrite \eqref{eq:Elinzeta} in the form
\begin{equation}
Lv:=-\z v''-\frac{v'}{m-2}+(b-\s_0)v=(\s-\s_0)v.
\label{eq:Lsigma-sigma0}
\end{equation}
Looking for solutions in power series $v(\z)=\sum_{n\geq 0}(\s-\s_0)^nv_n(\z)$ leads to
$$
\begin{array}{ccccl}
 n=0 & : & Lv_0 & = & 0\\
n\geq 1 & : & Lv_n & = & v_{n-1}.
\end{array}
$$
We naturally set $v_0:=v_{\s_0}$ to be the principal eigenfunction in Theorem \ref{theo:vpprincipale}, and also require $v_n$ to be $\mathcal{C}^1$ at $\z=0$ and satisfy $v_n(0)=0$ for $n\geq 1$ (so that $v(0)=1$ in the end). For any $g\in\mathcal{C}([0,\z_0])$ it is easy to check that Duhamel formula
$$
f(\z)= -v_0(\z)\displaystyle{\int\limits_0^{\z}\frac{1}{v_0^2(s)s^{\frac{1}{m-2}}}\left(\int\limits_0^{s}v_0(t)t^{\frac{1}{m-2}-1}g(t)\mathrm{d}t\right)\mathrm{d}s}.
$$
yields the unique solution $Lf=g$ satisfying the above boundary conditions, and that $||f||_{\mathcal{C}^1}\leq C||g||_{\infty}$ for some $C(\z_0)>0$. Equation $Lv_n=v_{n-1}$ is therefore uniquely solvable as $v_n=L^{-1}v_{n-1}$ and an induction argument immediately yields $||v_n||_{\mathcal{C}^1}\leq C^n||v_0||_{\infty}$, thus convergence in $\mathcal{C}^1$ with radius at least $R(\z_0):=\frac{1}{C(\z_0)}$.
\end{proof}
We have a similar result for the stable right branch:
\begin{prop}
For any fixed $\z_0>0$ there exists $R(\z_0)>0$ such that $\s\mapsto v_r(\s,.)$ is analytical into $\mathcal{C}^1_b([\z_0,+\infty[)$ with radius of convergence at least $R(\z_0)$.
\label{prop:vdanalytique}
\end{prop}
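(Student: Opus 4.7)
The plan is to mirror the proof of Proposition~\ref{prop:vganalytique}, adapted to two key differences: we now work on the unbounded half-line $[\z_0,+\infty[$ rather than the compact interval $[0,\z_0]$, and the relevant boundary condition is decay at $+\infty$ instead of regularity at $0$. The favorable feature is that $b(\z)-\s_0\to+\infty$ provides strong coercivity at infinity, which will yield a bounded inverse automatically preserving decay.

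Write $L:=-\z\partial_\z^2-\frac{1}{m-2}\partial_\z+(b-\s_0)$ and let $w_0:=v_r(\s_0,\cdot)$, which is $\mathcal{C}^2$ on $[\z_0,+\infty[$, solves $Lw_0=0$, is normalized by $w_0(\z_0)=1$, and decays at infinity (Remark~\ref{rmk:eqasymptotiquezeta}). Seeking $v_r(\s,\z)=\sum_{n\geq 0}(\s-\s_0)^nw_n(\z)$ and plugging into \eqref{eq:Lsigma-sigma0} yields the recursion $Lw_n=w_{n-1}$ for $n\geq 1$. Matching $v_r(\s,\z_0)=1$ and decay at infinity then forces $w_n(\z_0)=0$ and $w_n(+\infty)=0$ for $n\geq 1$.

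I would then construct a right inverse $L^{-1}:\mathcal{C}^1_b([\z_0,+\infty[)\to\mathcal{C}^1_b([\z_0,+\infty[)$ realizing these homogeneous boundary conditions. Put $L$ in formally self-adjoint form by multiplying by $\z^{1/(m-2)-1}$, and choose two linearly independent solutions of $Lu=0$: $u_s:=w_0$ (decaying at $+\infty$) and $u_b$ (the independent solution, normalized so that $u_b(\z_0)=0$, and hence necessarily blowing up at $+\infty$ since it is not proportional to $u_s$). The weighted Wronskian $\mathcal{W}_0:=\z^{1/(m-2)}(u_su_b'-u_bu_s')$ is a nonzero constant, and the standard Green's function
\[
G(\z,t):=\frac{t^{1/(m-2)-1}}{\mathcal{W}_0}\begin{cases}u_b(\z)u_s(t), & \z_0\leq\z\leq t,\\u_s(\z)u_b(t), & t\leq\z,\end{cases}
\]
provides $w(\z):=\int_{\z_0}^{+\infty}G(\z,t)g(t)\,dt$ as the unique solution of $Lw=g$ with $w(\z_0)=0$ and $w(+\infty)=0$.

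The main obstacle I expect is the uniform estimate $\|L^{-1}\|_{\mathcal{L}(\mathcal{C}^1_b)}\leq C(\z_0)$. A WKB analysis at infinity, where $L$ reduces to $-u''+B\z^{2/(m-2)}u\approx 0$ (cf.\ Remark~\ref{rmk:eqasymptotiquezeta}), shows that $u_s$ and $u_b$ have super-exponential rates of decay and growth respectively, while the constancy of $\mathcal{W}_0$ forces $u_su_b$ to decay algebraically at infinity. Combined with the exponential dominance of one solution over the other on either side of $\z=t$, this yields a uniform bound on $\int_{\z_0}^{+\infty}|G(\z,t)|\,dt$ for $\z\in[\z_0,+\infty[$, hence the sup-norm estimate on $L^{-1}g$; differentiation under the integral together with the same WKB estimates for $u_s'$, $u_b'$ transfers the bound to the $\mathcal{C}^1_b$ norm. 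Once $\|L^{-1}\|\leq C(\z_0)$ is secured, the induction $w_n=L^{-1}w_{n-1}$ gives $\|w_n\|_{\mathcal{C}^1_b}\leq C(\z_0)^n\|w_0\|_{\mathcal{C}^1_b}$ exactly as in Proposition~\ref{prop:vganalytique}, so $\sum(\s-\s_0)^nw_n$ converges in $\mathcal{C}^1_b([\z_0,+\infty[)$ for $|\s-\s_0|<R(\z_0):=1/C(\z_0)$. Uniqueness of the decaying branch up to scaling (Remark~\ref{rmk:eqasymptotiquezeta}), combined with $v_r(\s,\z_0)=1$ ensured by $w_0(\z_0)=1$, identifies this sum with $v_r(\s,\cdot)$.
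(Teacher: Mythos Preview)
Your proof is correct and follows essentially the same scheme as the paper: expand $v_r$ as $\sum(\s-\s_0)^n w_n$, solve the recursion $Lw_n=w_{n-1}$ with homogeneous conditions $w_n(\z_0)=0$ and $w_n(+\infty)=0$, and bound the inverse to get a geometric estimate $\|w_n\|\leq C^n$. The only difference is that the paper expresses this inverse via the single-solution Duhamel formula
\[
f(\z)=v_0(\z)\int_{\z_0}^{\z}\frac{1}{v_0^2(s)s^{1/(m-2)}}\Bigl(\int_{s}^{+\infty}v_0(t)\,t^{1/(m-2)-1}g(t)\,dt\Bigr)ds
\]
rather than the two-solution Green's function you build; since your $u_b(\z)=v_0(\z)\int_{\z_0}^{\z}v_0^{-2}(s)s^{-1/(m-2)}\,ds$, the two representations are algebraically equivalent.
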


\begin{proof}
The argument is almost identical, the only difference being that we want now $(v_n)_{n\geq 1}$ to decay at infinity (whereas we had to handle the singularity at $\z=0$ for the left branch). The corresponding Duhamel Formula for $Lf=g$ reads now
$$
f(\z)  = v_0(\z) \displaystyle{\int\limits_{\z_0}^{\z}\frac{1}{v_0^2(s)s^{\frac{1}{m-2}}}\left(\int\limits_{s}^{+\infty}v_0(t)t^{\frac{1}{m-2}-1}g(t)\mathrm{d}t\right)\mathrm{d}s},
$$
and the rest of the proof is straightforward. \end{proof}

\subsection{Functional setting multiplicities}
\label{section:cadrefonctionnel}
Let us remind at this point equation \eqref{eq:Elinzeta}
$$
\z>0:\qquad-\z u''-\frac{u'}{m-2}+bu=\s u,
$$
which is an eigenvalue problem for $L:=-\z\frac{d^2}{d\z^2}-\frac{1}{m-2}\frac{d}{d\z}+b(\z)$. Boundary conditions are (i) $\mathcal{C}^1$ regularity at $\z=0$ and (ii) decay $v(+\infty)=0$. So far we did not define any clear functional setting, and this is the purpose of this section.
\par
Since $v(+\infty)=0$ the largest possible space on which $L$ could act is clearly a subspace of
$$
E:=\Big{\{}f\in\mathcal{C}([0,+\infty]),\quad f(+\infty)=0\Big{\}},
$$
and our functional setting should also take into account the singularity $\z=0$.
\begin{prop}
For any $f\in E$ there exists a unique solution $v\in E$ of $Lv=f$ having $\mathcal{C}^1$ regularity at $\z=0$. This solution can be computed as
\begin{equation}
 v(\z)=L^{-1}[f](\z):=\overline{v}(\z)\displaystyle{
\int\limits_{\z}^{+\infty}\frac{1}{\overline{v}^2(s)s^{\frac{1}{m-2}}}\left(
\int\limits_0^s \overline{v}(t)t^{\frac{1}{m-2}-1}f(t)\mathrm{d}t
\right)\mathrm{d}s
},
\label{eq:formulintv(zeta)}
\end{equation}
where $\overline{v}(\z)>0$ is the solution in Proposition~\ref{prop:vreguliereens} for $\s=0$. We have moreover $v'(0)=-(m-2)f(0)$.
\label{prop:domain_L}
\end{prop}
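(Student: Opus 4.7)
My plan is to build $v$ by reduction of order from the positive solution $\bar v$ of the homogeneous equation $L\bar v = 0$ provided by Propositions~\ref{prop:vreguliereens} and~\ref{prop:sigmapetit} with $\s = 0$ (so that $\bar v(0) = 1$, $\bar v'(0) = 0$, $\bar v > 0$ on $[0,+\infty[$ and $\bar v(+\infty) = +\infty$). Writing $v = \bar v w$ in $Lv = f$ cancels the zero-th order term in $w$ thanks to $L\bar v = 0$ and leaves the first order linear ODE
\[
\z \bar v\, y' + \Big(2\z \bar v' + \frac{\bar v}{m-2}\Big) y = -f, \qquad y := w',
\]
whose integrating factor is exactly $\bar v^2 \z^{1/(m-2)}$. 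Thus $\bigl(\bar v^2 \z^{1/(m-2)} w'\bigr)' = -\bar v\, \z^{1/(m-2)-1} f$, and integrating once from $0$ to $\z$ (the constant of integration is forced to vanish by requiring $\mathcal{C}^1$-regularity at $\z = 0$) and then from $\z$ to $+\infty$ (chasing decay at infinity) produces exactly formula~\eqref{eq:formulintv(zeta)}.

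\smallskip

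Next I would verify that the two nested integrals converge and that $Lv = f$ by direct differentiation. Near $t = 0$ the inner integrand is $O(t^{1/(m-2)-1})$, which is integrable since $1/(m-2) > 0$. At the outer end $s \to +\infty$ I would invoke the WKB-type asymptote $\bar v(s) \sim c\, s^{-1/(2(m-2))} \exp\bigl(\tfrac{m-2}{m-1}\sqrt{B}\, s^{(m-1)/(m-2)}\bigr)$ coming from the asymptotic equation of Remark~\ref{rmk:eqasymptotiquezeta}: the super-exponential decay of $1/\bar v^2(s)$ dominates the at-most super-exponential growth of $I(s) := \int_0^s \bar v(t)\, t^{1/(m-2)-1} f(t)\, dt$, so the outer integral converges absolutely.

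\smallskip

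For the boundary data I proceed as follows. The slope at $\z = 0$ is obtained from $v'(0) = \bar v(0) w'(0) + \bar v'(0) w(0) = w'(0)$ since $\bar v'(0) = 0$; combining $w'(\z) = -\bar v^{-2}(\z)\, \z^{-1/(m-2)}\, I(\z)$ with the equivalent $I(\z) \sim f(0) \int_0^\z t^{1/(m-2)-1}\, dt = (m-2) f(0)\, \z^{1/(m-2)}$ as $\z \to 0^+$ yields precisely $v'(0) = -(m-2) f(0)$. For $v(+\infty) = 0$, I exploit $f \in E$ by splitting: for $\eta > 0$ fix $N$ with $|f| \leq \eta$ on $[N,+\infty[$, decompose $I = I_{[0,N]} + I_{[N,\cdot]}$, and estimate the two resulting pieces of $v$ separately. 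For the $[0,N]$-piece the quantity $I_{[0,N]}$ is a bounded constant and $\bar v(\z) \int_\z^{+\infty} \bar v^{-2}(s)\, s^{-1/(m-2)}\, ds \to 0$ as $\z \to +\infty$ by integration by parts together with the WKB asymptote; for the $[N,\cdot]$-piece an analogous computation gives a contribution uniformly bounded by $C\eta$. Sending $\eta \to 0$ concludes.

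\smallskip

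Uniqueness is then straightforward: the difference $h$ of two solutions solves $Lh = 0$, belongs to $E$ and is $\mathcal{C}^1$ at $\z = 0$; by the uniqueness statement of Proposition~\ref{prop:vreguliereens} (applied with $\s = 0$), $h = h(0)\, \bar v$, so $h(+\infty) = 0$ combined with $\bar v(+\infty) = +\infty$ forces $h \equiv 0$. The main obstacle I anticipate is the $v(+\infty) = 0$ claim, since it hinges on the precise super-exponential asymptote of $\bar v$ at infinity and a careful splitting argument on $f$; all other steps are either direct computations or immediate consequences of previously established propositions.
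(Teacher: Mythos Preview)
Your proposal is correct and follows essentially the same approach as the paper: the formula~\eqref{eq:formulintv(zeta)} is a variation-of-constants (reduction of order) expression built on the positive homogeneous solution $\bar v$, and the paper's own proof is simply a two-line sketch asserting that the $\mathcal{C}^1$ regularity at $\z=0$ and the decay $v(+\infty)=0$ are ``easy to retrieve'' from this formula. You have filled in precisely those omitted verifications (convergence of the nested integrals, the computation of $v'(0)$ via the asymptote of $I(\z)$, the splitting argument for decay, and uniqueness via Proposition~\ref{prop:vreguliereens}), so there is nothing to correct.
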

\begin{proof}
The $\mathcal{C}^1$ regularity at $\z=0$ is easy to retrieve, and one readily checks that $L^{-1}$ maps $E$ to $E$ (or in other words that $v$ decays at infinity if $f$ does).
\end{proof}
This defines very naturally the domain
$$
D(L):=L^{-1}(E)\subset E,
$$
where $L^{-1}$ is given by Proposition~\ref{prop:domain_L}. Any $u\in D(L)$ therefore satisfies both boundary conditions, and let us point out that if $f=L^{-1}g$ with $g\in E$ then $f$ is of course a classical $\mathcal{C}^2(]0,+\infty[)$ solution of the corresponding ODE.
\begin{prop}
$\s_0$ is an eigenvalue of $L:D(L)\subset E\rightarrow E$ with geometric multiplicity $m_g(\s_0)=1$.
\label{prop:multgeom=1}
\end{prop}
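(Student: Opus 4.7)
The plan has two parts: first exhibit $v_0$ as an element of $D(L)$, confirming that $\s_0$ is genuinely an eigenvalue in this functional setting; then show that any other eigenfunction must be a scalar multiple of $v_0$ by reducing the eigenvalue equation to a first-order ODE for the ratio.

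For the first part, Theorem~\ref{theo:vpprincipale} produces $v_0\in\mathcal{C}^1([0,+\infty[)\cap\mathcal{C}^2(]0,+\infty[)$ with $v_0(+\infty)=0$ (hence $v_0\in E$), satisfying $Lv_0=\s_0 v_0$ pointwise on $]0,+\infty[$. Setting $f:=\s_0 v_0\in E$, Proposition~\ref{prop:domain_L} provides a unique $\mathcal{C}^1$-regular element $L^{-1}f\in E$ solving $Lv=f$; since $v_0$ itself satisfies all these defining properties, uniqueness forces $v_0=L^{-1}(\s_0 v_0)\in L^{-1}(E)=D(L)$, and $\s_0$ is an eigenvalue of $L$.

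For the second part, let $w\in D(L)$ satisfy $Lw=\s_0 w$. Since $v_0>0$ on $[0,+\infty[$ by Theorem~\ref{theo:vpprincipale}, the ratio $\alpha:=w/v_0$ is well defined and $\mathcal{C}^2$ on $]0,+\infty[$. Substituting $w=\alpha v_0$ into $Lw=\s_0 w$ and using $Lv_0=\s_0 v_0$ to cancel the terms proportional to $\alpha$, the remaining identity
\[
\z v_0\alpha''+\Bigl(2\z v_0'+\frac{v_0}{m-2}\Bigr)\alpha'=0
\]
is a first-order linear ODE for $\alpha'$, whose integration yields
\[
\alpha'(\z)=K\,\frac{1}{v_0^2(\z)\,\z^{1/(m-2)}}
\]
for some $K\in\R$. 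Near $\z=0$ one has $v_0\to 1$, so $\alpha'(\z)\sim K\z^{-1/(m-2)}$ and integrating gives $\alpha(\z)-\alpha(0)\sim K\frac{m-2}{m-3}\z^{(m-3)/(m-2)}$. Consequently $w=\alpha v_0$ carries a contribution whose derivative blows up like $K\z^{-1/(m-2)}$ unless $K=0$. Since $w\in D(L)$ is by construction $\mathcal{C}^1$ at $\z=0$, we must have $K=0$, so $\alpha$ is constant and $w\in\R v_0$.

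The only delicate point is the Frobenius-type behaviour at the first-kind singularity $\z=0$, which selects a one-dimensional subspace of $\mathcal{C}^1$-regular local solutions out of a two-dimensional solution space (the regular branch with indicial exponent $0$ versus the branch $\sim\z^{(m-3)/(m-2)}$, the latter being excluded precisely thanks to the assumption $m>3$ which makes $(m-3)/(m-2)<1$). This is exactly the analysis already performed in the proof of Proposition~\ref{prop:vreguliereens}, so no genuinely new obstacle arises.
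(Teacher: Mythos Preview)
Your proof is correct but takes a different route from the paper. The paper's argument for $m_g(\s_0)=1$ is a one-liner using the behaviour at $+\infty$: by Remark~\ref{rmk:eqasymptotiquezeta}, the two-dimensional solution space of the second-order ODE always contains a solution that blows up exponentially at infinity and therefore cannot lie in $E$; hence at most a one-dimensional subspace of solutions can belong to $D(L)\subset E$. You instead exploit the boundary condition at the \emph{other} end, namely the $\mathcal{C}^1$ regularity at the first-kind singularity $\z=0$: via the Wronskian/ratio reduction you identify the second independent solution as carrying a $\z^{(m-3)/(m-2)}$ branch whose derivative blows up like $\z^{-1/(m-2)}$, which is incompatible with membership in $D(L)$. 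Both arguments work because $D(L)$ encodes \emph{two} one-dimensional constraints (regularity at $0$ and decay at $+\infty$) on a two-dimensional solution space, and either one alone already forces the eigenspace down to $\R v_0$. The paper's approach is shorter since Remark~\ref{rmk:eqasymptotiquezeta} is already on record; yours is more computational but has the mild advantage of making the Frobenius dichotomy at $\z=0$ explicit, which resonates with the analysis in Proposition~\ref{prop:vreguliereens}. One small inaccuracy: the exclusion of the singular branch does not hinge specifically on $m>3$ (for any $m>2$ the second indicial exponent $1-\frac{1}{m-2}$ is strictly less than $1$, so the corresponding solution is never $\mathcal{C}^1$ at the origin); the hypothesis $m>3$ only ensures this exponent is positive, i.e.\ that the singular branch is continuous rather than blowing up.
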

This is of course consistent with the one-dimensionality in Theorem~\ref{theo:reldisp}.
\begin{proof}
For $\s=\s_0$ the principal eigenfunction $v_0=v_{\s_0}$ constructed in Theorem \ref{theo:vpprincipale} solves the ODE, has the required regularity and decays at infinity, thus belongs to $D(L)$ and is indeed a proper eigenvalue. Regarding it's geometric multiplicity we recall from Remark~\ref{rmk:eqasymptotiquezeta} that for any $\s$ there always exists a solution of the ODE which blows (exponentially) at infinity and therefore cannot belong to $E$.
\end{proof}
The algebraic multiplicity will also be important in Section~\ref{section:raccord}:
\begin{prop}
$\s_0$ has finite algebraic multiplicity $m_a(\s_0)=1$.
\label{prop:multalg=1}
\end{prop}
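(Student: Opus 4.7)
Since the geometric multiplicity is $1$ by Proposition~\ref{prop:multgeom=1}, it is enough to exclude a Jordan chain of length $2$: we must show that the equation
$$
(L-\s_0)w = v_{\s_0}
$$
has no solution $w\in D(L)$. The plan is to pair this equation with $v_{\s_0}$ against a weight that makes $L$ formally symmetric, and derive a contradiction with the strict positivity of $v_{\s_0}$.

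A short computation rewrites $L$ in Sturm--Liouville form
$$
Lu = -\z^{\frac{m-3}{m-2}}\bigl(\z^{\frac{1}{m-2}} u'\bigr)' + b(\z)u,
$$
which is formally symmetric for the scalar product weighted by $p(\z):=\z^{-(m-3)/(m-2)}$. The corresponding Lagrange identity reads
$$
p(\z)\bigl[(Lw)v_{\s_0}-w(Lv_{\s_0})\bigr] = -\frac{d}{d\z}\Bigl[\z^{\frac{1}{m-2}}\bigl(w' v_{\s_0}-w v_{\s_0}'\bigr)\Bigr].
$$
Integrating over $]0,+\infty[$ and using $Lv_{\s_0}=\s_0 v_{\s_0}$ together with $Lw=v_{\s_0}+\s_0 w$, the bulk left-hand side collapses to $\int_0^\infty v_{\s_0}^2\,p\,d\z$, and the whole argument thus reduces to showing that the two boundary terms in the integration by parts vanish.

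At $\z=0^+$ this is straightforward: Propositions~\ref{prop:vreguliereens} and \ref{prop:domain_L} ensure that $w,w',v_{\s_0},v_{\s_0}'$ are bounded, while the prefactor $\z^{1/(m-2)}$ vanishes. At $\z=+\infty$ the key input is Remark~\ref{rmk:eqasymptotiquezeta}: any solution of \eqref{eq:Elinzeta} that remains bounded at infinity must match the one-dimensional stable branch of the asymptotic equation $-u''+B\z^{2/(m-2)}u=0$ and therefore decays at least with a WKB rate $\sim e^{-c\z^{m/(2(m-2))}}$. Since both $v_{\s_0}$ and $w$ belong to $E$, each one projects onto this stable branch; consequently $\z^{1/(m-2)}(w'v_{\s_0}-w v_{\s_0}')\to 0$ at infinity.

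Once both boundary terms are disposed of, the integrated Lagrange identity reads
$$
\int_0^{+\infty} v_{\s_0}^2(\z)\,p(\z)\,d\z = 0,
$$
which is absurd: $v_{\s_0}>0$ by Theorem~\ref{theo:vpprincipale}, the weight $p$ is positive and integrable near $\z=0$ since $m>3$ forces $(m-3)/(m-2)<1$, and the super-exponential decay of $v_{\s_0}$ handles integrability at infinity. I expect the only genuine technical obstacle to be the quantitative decay of $w$ and $w'$ at $+\infty$, sharp enough to beat the polynomial factor $\z^{1/(m-2)}$; this should be obtainable either by feeding $f=v_{\s_0}+\s_0 w\in E$ into the Duhamel representation \eqref{eq:formulintv(zeta)} and exploiting the known decay of $\overline v$, or more directly by a standard asymptotic ODE analysis on the first-order system satisfied by $(w,w')$ at infinity, ruling out the unstable WKB mode thanks to the constraint $w\in E$.
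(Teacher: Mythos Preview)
Your argument is correct and rests on the same identity the paper uses, but the packaging is different. Writing $w=\alpha v_0$, the paper's variation of constants gives
\[
\alpha'(\z)=-\frac{1}{\z^{1/(m-2)}v_0^2(\z)}\int_0^{\z}s^{1/(m-2)-1}v_0^2(s)\,ds,
\]
which is nothing but your integrated Lagrange identity $\z^{1/(m-2)}(w'v_0-wv_0')=-\int_0^{\z}p\,v_0^2$ specialised to $w=\alpha v_0$. The paper then reads this forward: the integral tends to $-C<0$, hence $\alpha'\sim -C/(\z^{1/(m-2)}v_0^2)$ blows up, and one checks that $\alpha v_0$ itself blows up, contradicting $w\in E$. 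Your route reads the same identity backward: assume $w\in D(L)$, show the boundary contribution at infinity vanishes, and conclude $\int_0^{\infty}p\,v_0^2=0$. The self-adjoint viewpoint is more conceptual and explains at once why simplicity should hold; the paper's version is more hands-on but has the advantage of \emph{not} requiring an independent decay estimate on $w'$ --- the identity itself forces $w'$ to blow up, which is the contradiction.

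Two small corrections. First, in route (a) you invoke ``the known decay of $\overline v$'', but $\overline v$ is the $\sigma=0$ solution and by Proposition~\ref{prop:sigmapetit} it satisfies $\overline v(+\infty)=+\infty$; it is precisely this \emph{growth} (through $1/\overline v^2$ in the outer integral of \eqref{eq:formulintv(zeta)}) that produces decay of $L^{-1}f$. Your route (b) via asymptotic ODE analysis is the cleaner way to close the argument: once you know $w\in E$ the unstable WKB mode is excluded, and since the forcing $v_0/\z$ itself decays at the stable rate the decaying particular solution can be chosen to decay like $v_0$, so $w,w'$ inherit super-exponential decay and the boundary term dies. Second, the integrability of $p(\z)=\z^{-(m-3)/(m-2)}$ near $\z=0$ does not require $m>3$: the exponent $(m-3)/(m-2)$ is strictly less than $1$ for every $m>2$.
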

\begin{proof}
If $v_0$ denotes again our principal eigenfunction and $L_0:=L-\s_0$ we need to show that $\mathrm{Ker}(L_0^2)=\mathrm{Ker}(L_0)$ is generated by $v_0$; in other words, that there is no solution $v\in D(L)$ of $Lv=v_0\Leftrightarrow -\z v''-\frac{v'}{m-2}+(b-\s_0)v=v_0$. Assuming by contradiction that $v$ is such a solution, Variation of Constants $v(\z)=\alpha(\z) v_0(\z)$ and boundary conditions lead to
$$
\alpha'(\z)=-\frac{1}{\z^{\frac{1}{m-2}}v_0^2(\z)}\displaystyle{\int\limits_0^{\z}s^{\frac{1}{m-2}-1}v_0^2(s)\,\mathrm{d}s}.
$$
Decay $v_0(+\infty)=0$ (at least exponentially) implies that the integral above is absolutely convergent at infinity, hence that $\alpha'(\z)\underset{+\infty}{\sim}-\frac{C}{v_0^2(\z)\z^{\frac{1}{m-2}}}\to -\infty$ with $C=\int_0^{+\infty}s^{\frac{1}{m-2}-1}v_0^2(s)\,\mathrm{d}s>0$ (let us also recall that $v_0>0$). Using the equation for $v_0$ it is finally easy to see that $\alpha'$ blows so fast at infinity that $v=\alpha v_0$ does too, thus contradicting $v(+\infty)=0$ and $v\in D(L)$.
\end{proof}
\begin{prop}
$\s=\s_0\in \R$ is an isolated eigenvalue in $\mathbb{C}$.
\label{prop:s0vpisolee}
\end{prop}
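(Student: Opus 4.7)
The strategy is to reduce the question of isolation to the identity principle for scalar analytic functions, via a Wronskian between the left regular branch and the right stable branch of solutions of \eqref{eq:Elinzeta}.

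First I would extend Propositions~\ref{prop:vganalytique} and \ref{prop:vdanalytique} to complex $\s$ in a disk $D(\s_0,R)\subset\mathbb{C}$: their proofs consist of power-series constructions $v_n=L^{-1}v_{n-1}$ with bounds $\|v_n\|\leq C^n$ depending on $|\s-\s_0|$ only, so the arguments carry over verbatim to complex increments. This yields a left branch $v_l(\s,\cdot)\in\mathcal{C}^1([0,\z_0])$ with $v_l(\s,0)=1$, and a right stable branch $v_r(\s,\cdot)\in\mathcal{C}_b^1([\z_0,+\infty[)$ decaying at infinity and normalized by $v_r(\s,\z_0)=1$, both analytic in $\s\in D(\s_0,R)$. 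I then define the scalar Wronskian
\[
W(\s):=v_l(\s,\z_0)\,v_r'(\s,\z_0)-v_l'(\s,\z_0)\,v_r(\s,\z_0),
\]
which is analytic on $D(\s_0,R)$ as a continuous linear functional of the $\mathcal{C}^1$ branches.

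The key observation is that $\s\in D(\s_0,R)$ is an eigenvalue of $L$ if and only if $W(\s)=0$. Indeed, any eigenfunction $v\in D(L)$ must be $\mathcal{C}^1$ at $\z=0$, hence proportional to $v_l(\s,\cdot)$ by uniqueness in Proposition~\ref{prop:vreguliereens}, and must decay at infinity, hence proportional to $v_r(\s,\cdot)$ by the asymptotic dichotomy of Remark~\ref{rmk:eqasymptotiquezeta} (which persists for complex $\s$ since the leading asymptotic potential $B\z^{2/(m-2)}$ remains WKB-dominant); these two solutions coincide up to scale exactly when $W(\s)$ vanishes. Theorem~\ref{theo:vpprincipale} already gives $W(\s_0)=0$.

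To conclude, it remains to rule out $W\equiv 0$ on $D(\s_0,R)$, and here Proposition~\ref{prop:v(+infty)=+infty} is decisive: for every real $\s\in[0,\s_0)$ the regular solution $v_l(\s,\cdot)=v_{\s}$ satisfies $v_{\s}(+\infty)=+\infty$, so it cannot be proportional to the decaying $v_r(\s,\cdot)$, whence $W(\s)\neq 0$. Since the half-interval $(\s_0-R,\s_0)\subset[0,\s_0)$ (shrinking $R$ if necessary) accumulates at $\s_0$ and $W$ does not vanish on it, the identity principle forces $W\not\equiv 0$ near $\s_0$, so $\s_0$ is an isolated zero of $W$ and therefore an isolated eigenvalue of $L$ in $\mathbb{C}$. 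The main obstacle is the complex extension of the two analytic branches together with the justification that $v_r(\s,\cdot)$ still spans the full one-dimensional space of decaying solutions for complex $\s$; once these are in place the conclusion is a short analytic-continuation argument.
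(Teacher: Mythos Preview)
Your proof is correct but follows a genuinely different line from the paper's. The paper observes that writing \eqref{eq:Elinzeta} in divergence form
\[
-\left(\z^{\frac{1}{m-2}}v'\right)'+\frac{b(\z)}{\z^{1-\frac{1}{m-2}}}v=\frac{\s}{\z^{1-\frac{1}{m-2}}}v
\]
makes the problem formally self-adjoint, so all eigenvalues are real and it suffices to show $\s_0$ is isolated in $\R$; this then follows immediately from Propositions~\ref{prop:v(+infty)=+infty} and~\ref{prop:v(+infty)=-infty}, since the unique regular solution $v_{\s}$ of Proposition~\ref{prop:vreguliereens} cannot decay for real $\s$ on either side of $\s_0$.

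Your route bypasses the self-adjointness observation entirely: you extend the analytic branches to a complex disk, build the Wronskian $W(\s)$, and invoke the identity principle using only the one-sided information $W(\s)\neq 0$ on $(\s_0-R,\s_0)$ from Proposition~\ref{prop:v(+infty)=+infty}. This costs more machinery --- the complex extension of both branches and the persistence of the one-dimensional stable subspace at $+\infty$ for complex $\s$, which you rightly flag as the main technical point --- but is structurally more robust: it would survive a non-self-adjoint perturbation of the problem, and it in fact anticipates the Evans-function framework the paper deploys later in Section~\ref{subsection:Evans}. The paper's argument is shorter here precisely because self-adjointness disposes of complex eigenvalues for free; yours trades that structural shortcut for a general analytic-continuation mechanism.
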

\begin{proof}
Rewriting $Lv=\s v\Leftrightarrow -\z v''-\frac{1}{m-2}v'+b(\z)v=\s v$ in the divergence form
$$
-\left(\z^{\frac{1}{m-2}}v'\right)'+\frac{b(\z)}{\z^{1-\frac{1}{m-2}}}v=\frac{\s}{\z^{1-\frac{1}{m-2}}}v,
$$
we see that the problem is self-adjoint and it is enough to prove that $\s_0$ is isolated in $\R$. Let us remind that we are looking for eigenfunctions $v\in D(L)$, hence $\mathcal{C}^1$ at $\z=0$ and decaying at infinity. As a result any eigenfunction $v_{\s}$ must agree with the unique regular solution of Proposition~\ref{prop:vreguliereens}, which according to Proposition~\ref{prop:v(+infty)=+infty} and Proposition~\ref{prop:v(+infty)=-infty} cannot decay if $\s$ is close to $\s_0$. \end{proof}
%
%
\section{Asymptotic matching}
\label{section:raccord}
In this section we match the physical setting $\eps>0,k<+\infty$ with the (formal) asymptotic problem $\eps=0$, $k=+\infty$ of Section~\ref{section:zonechaude}.
\\
\par
For $s$ of order $k^{1-\frac{1}{m-1}}$ we constructed in Section~\ref{section:zonefroide} the solution of \eqref{eq:Elinx} in $\xi=x/\eps$ coordinates with maximal decay at $-\infty$, normalized as $v(\xi_{\eps})=1$. This was done for $\eps>0$ and $k<+\infty$ (small and large enough) so we may indistinctly switch from $\xi$ to $x$ coordinates. This left solution extends to $x>x_{\eps}$ and will be denoted below by $u_l(x)=u_l(\eps,k,s,x)$.
\par
On the right side $x=+\infty$ we have $p=1$, $p'=p''=0$ and the characteristic equation associated with \eqref{eq:Elinx} reads $-r^2+c_{\eps}r+(k^2-s-G'(1))=0$, yielding two exponential rates $r^{\pm}$. In the regime \eqref{eq:regime_eta} with $s$ of order $k^{1-\frac{1}{m-1}}\ll k^2$ (and $c_{\eps}\to c_0>0$) these are
$$
r^{\pm}\underset{\eps,k}{\sim}\pm k,
$$
and there always exists a unique branch of stable solutions $u(+\infty)=0$ which we will denote by $u_r(x)=u_r(\eps,k,s,x)$.
\par
Instead of parameters $k\rightarrow +\infty$, $s=\mathcal{O}\left(k^{1-\frac{1}{m-1}}\right)\rightarrow +\infty$ and $x$ coordinates we will rather use
$$
\delta=\frac{1}{k^{1-\frac{1}{m-1}}}\rightarrow 0,\qquad \s=s\frac{\delta}{(m-2)c_{\eps}}=\mathcal{O}(1),\qquad\z=x/\delta,
$$
and \eqref{eq:Elinzetak} reads
\begin{equation}
\left\{
\begin{array}{l}
 Lv:=-qv''+\left(c-2\frac{q'}{m-2}\right)v'+\left(\left(q\right)^{\frac{m}{m-2}}-(m-2)c\s-q''-\delta G'(\delta q)\right)v=0,\\
c=c_{\eps}>0, \qquad q=q_{\eps,\delta}(\z)=\frac{1}{\delta}p_{\eps}(\delta \z).
\end{array}
\right.
\label{eq:Elinzetadelta}
\end{equation}
In terms of these new parameters $(\eps,\delta)$ the frequency regime \eqref{eq:regime_eta} becomes
\begin{equation}
 \omega:=\Big{\{}\eps,\delta>0:\qquad 0<\eps^{-\eta\frac{m-2}{m-1}+\frac{1}{m-1}}<\delta<\delta_0\Big{\}},
\label{eq:regimefreqdelta}
\end{equation}
where $\eta>0$ is again exactly the one in Theorem~\ref{theo:reldisp}, and we are now interested in the double limit $(\eps,\delta)\overset{\omega}{\to}(0,0)$.
\par
Throughout this entire section we will write $v_l(\eps,\delta,\s,\z)$ for the left branch coming from the cold zone and $v_{r}(\eps,\delta,\s,\z)$ for the stable right branch normalized as $v_{r}(\z_0)=1$ (for some $\z_0>0$ to be chosen later). Those are simply $u_l$ and $u_r$ expressed in $\z$ variables and in terms of $(\eps,\delta)$ instead of $(\eps,k)$. We will also denote by $v_{l0}(\s,\z)$ the regular solution of the formal asymptotic problem \eqref{eq:Elinzeta} (defined in Proposition~\ref{prop:vreguliereens}), and $v_{r0}(\s,\z)$ its stable right branch normalized as $v_{r0}(\s,\z_0)=1$ (see Remark \ref{rmk:eqasymptotiquezeta}). We will prove that
$$
\begin{array}{c}
v_l(\eps,\delta,\s,.)\rightarrow v_{l0}(\s,.)\\
\frac{\partial v_l}{\partial \s}(\eps,\delta,\s,.)\rightarrow \frac{\partial v_{l0}}{\partial \s}(\s,.)
\end{array}
\qquad \text{and}\qquad 
\begin{array}{c}
v_r(\eps,\delta,\s,.)\rightarrow v_{r0}(\s,.)\\
\frac{\partial v_r}{\partial \s}(\eps,\delta,\s,.)\rightarrow \frac{\partial v_{r0}}{\partial \s}(\s,.)
\end{array}
$$
when $(\eps,\delta)\rightarrow (0,0)$ in $\omega$. This should be no surprise since the asymptotic problem was precisely obtained in the (formal) limit $(\eps,k)=(0,+\infty)$ or equivalently $(\eps,\delta)=(0,0)$.
\\
\par
Our first step is to establish convergence of the scaled wave profile $q_{\eps,\delta}\rightarrow q_0$ in the linear zone $\z\in[\z_{\eps},\z_{\delta}]$, where $\z_{\eps}=x_{\eps}/\delta$ corresponds to the exit of the cold zone $x=x_{\eps}=\eps^{1-a}$ and $\z_{\delta}=x_{\theta}/\delta$ to the exit of the linear zone $x=x_{\theta}$ defined in \eqref{eq:defxthetatranslationpeps}. One can check that
$$
\z_{\eps}\to 0 \qquad \text{and}\qquad  \z_{\delta}\to+\infty
$$
when $(\eps,\delta)\overset{\omega}{\to}(0,0)$, and the linear zone stretches to $]0,+\infty[$ in this scaling.
\begin{prop}
We have convergence $||q_{\eps,\delta}(\z)-(m-2)c_0\z||_{\mathcal{C}^2([\z_{\eps},\z_{\delta}])}\rightarrow 0$ in the double limit \eqref{eq:regimefreqdelta}.
\label{prop:q->q0}
\end{prop}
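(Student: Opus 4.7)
The plan is to carry out direct asymptotic analysis by substituting $x=\delta\z$ in the scaling $q_{\eps,\delta}(\z)=p_\eps(\delta\z)/\delta$ and tracking the powers of $\eps$ and $\delta$ that appear in the error terms. All bookkeeping will rely on the explicit formulas \eqref{eq:p'}--\eqref{eq:p''}, the wave asymptotes \eqref{eq:asymptotvarx}, and the technical choice $a=\frac{m-2}{m-1}(1+\eta/2)$ from \eqref{eq:def_xeps}, which was made precisely so that the frequency regime \eqref{eq:regimefreqdelta} forces all error terms to vanish.

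\textbf{Step 1 (second derivative).} Write $q''_{\eps,\delta}(\z)=\delta p''_\eps(\delta\z)$. Since $\z\in[\z_\eps,\z_\delta]$ corresponds to $x=\delta\z\in[x_\eps,x_\theta]$ with $x_\eps=\eps^{1-a}\gg\eps$, the asymptote \eqref{eq:asymptotp''} applies, giving $p''_\eps(\delta\z)\leq C\eps^{1/(m-2)}/(\delta\z)^{(m-1)/(m-2)}$. Since this is decreasing in $\z$, the supremum is attained at $\z=\z_\eps=\eps^{1-a}/\delta$; an elementary computation using $a(m-1)=(m-2)(1+\eta/2)$ yields
\[
\|q''_{\eps,\delta}\|_{\mathcal{C}^0([\z_\eps,\z_\delta])}\leq C\,\eps^{\eta/2}\,\delta\xrightarrow[\omega]{}0.
\]

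\textbf{Step 2 (first derivative).} From \eqref{eq:p'}, $q'_{\eps,\delta}(\z)-(m-2)c_\eps=-(m-2)c_\eps(\eps/p_\eps(\delta\z))^{1/(m-2)}$ on the linear zone. The boundary-layer analysis following Proposition~\ref{prop:peps->p0}, specifically $q(\xi)\sim(m-2)c_\eps\xi$ at $\xi=\xi_\eps=\eps^{-a}\to\infty$, gives $p_\eps(x_\eps)\sim(m-2)c_\eps\eps^{1-a}$, hence by monotonicity $p_\eps(x)\geq C\eps^{1-a}$ uniformly on $[x_\eps,x_\theta]$. Therefore $(\eps/p_\eps)^{1/(m-2)}\leq C'\eps^{a/(m-2)}$, and combining with $c_\eps\to c_0$,
\[
\|q'_{\eps,\delta}-(m-2)c_0\|_{\mathcal{C}^0([\z_\eps,\z_\delta])}\leq C''\eps^{a/(m-2)}\xrightarrow[\omega]{}0.
\]

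\textbf{Step 3 (function itself).} Integrating Step 2 from $\z_\eps$ one obtains
\[
|q_{\eps,\delta}(\z)-(m-2)c_0\z|\leq \bigl|q_{\eps,\delta}(\z_\eps)-(m-2)c_0\z_\eps\bigr|+(\z_\delta-\z_\eps)\|q'_{\eps,\delta}-(m-2)c_0\|_\infty.
\]
The integral term is $O\bigl(\eps^{a/(m-2)}/\delta\bigr)$, while the boundary-layer expansion of $q(\xi_\eps)$ (pushed one order further than in Step 2, using that $q-(m-2)c_\eps\xi$ solves a first-order ODE whose right-hand side is integrable in $\xi$ as soon as $m>3$) shows that $|q_{\eps,\delta}(\z_\eps)-(m-2)c_0\z_\eps|$ is itself a positive power of $\eps$ divided by $\delta$. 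The key algebraic verification is that the exponent $a/(m-2)=(1+\eta/2)/(m-1)$ strictly exceeds the lower-bound exponent $(1-\eta(m-2))/(m-1)$ defining the regime \eqref{eq:regimefreqdelta}, which forces $\eps^{a/(m-2)}/\delta\to 0$; exactly the same inequality handles the other term.

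The main obstacle is Step 3: dividing errors by the small parameter $\delta$ is delicate, and the entire choice of the technical exponent $a=\frac{m-2}{m-1}(1+\eta/2)$ in \eqref{eq:def_xeps} together with the restrictive frequency regime $\mathcal{U}$ in \eqref{eq:regime_eta} was tuned precisely so that this division produces vanishing quantities. The assumption $m>3$ from Theorem~\ref{theo:reldisp} also enters here to make the integrals appearing in the boundary-layer expansion of $p_\eps(x_\eps)$ converge at the required rate.
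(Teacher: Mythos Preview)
Your approach is exactly what the paper sketches (it merely says to use Proposition~\ref{prop:peps->p0} and the asymptotes~\eqref{eq:asymptotvarx}), and Steps~1 and~2 are correct. Two small points in Step~3 should be corrected.

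First, your integrability claim is reversed: for $m>3$ one has $1/(m-2)<1$, so $q(\xi)^{-1/(m-2)}\sim C\xi^{-1/(m-2)}$ is \emph{not} integrable at $\xi=+\infty$. The integral $\int^{\xi_\eps}q^{-1/(m-2)}$ therefore grows like $\xi_\eps^{(m-3)/(m-2)}=\eps^{-a(m-3)/(m-2)}$, yielding $|p_\eps(x_\eps)-(m-2)c_\eps x_\eps|=O\bigl(\eps^{\,1-a(m-3)/(m-2)}\bigr)$. This exponent is still positive (since $a(m-3)/(m-2)<1$ for $\eta$ small) and one checks directly that $\eps^{\,1-a(m-3)/(m-2)}/\delta\to 0$ in the regime~\eqref{eq:regimefreqdelta}, so your conclusion survives; only the justification needs amending.

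Second, when you feed the bound $\|q'-(m-2)c_0\|_\infty\leq C\eps^{a/(m-2)}$ from Step~2 into the integral term of Step~3, you are implicitly using the \emph{quantitative} rate $|c_\eps-c_0|=O(\eps^{1/(m-2)})$, not merely $c_\eps\to c_0$; this rate follows from the implicit equation $(m-2)c_\eps\bigl(1-(\eps/\theta)^{1/(m-2)}\bigr)=\alpha_0(c_\eps)$ in the proof of Proposition~\ref{prop:peps->p0} and should be mentioned. A cleaner alternative for Step~3 is to integrate from $\z_\delta$ instead of $\z_\eps$: the pinning $p_\eps(x_\theta)=\theta=(m-2)c_0x_\theta$ makes the boundary term vanish identically, so the boundary-layer computation is not needed at all.
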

The proof is technical but straightforward when taking advantage of Proposition~\ref{prop:peps->p0} and asymptotes \eqref{eq:asymptotvarx}. In order to retrieve the above $\mathcal{C}^2$ convergence it is important that the exit of the cold zone was pushed far enough out of the boundary layer, thus allowing to neglect $p_{\eps}''(x)$ in the linear zone.

%
%
\subsection{Regularity of the branches}
\label{section:branches_regularity}
We start by establishing convergence of the left branches as claimed above:
\begin{prop}
Fix $\z_0>0$ and $\s_*$: then $v_l(\eps,\delta,\s,.)\to v_{l0}(\s_*,.)$ in $\mathcal{C}^1([\z_{\eps},\z_0])$ when $(\eps,\delta,\s)\rightarrow (0,0,\s_*)$ in the double limit \eqref{eq:regimefreqdelta}.
\label{prop:vg->vg0}
\end{prop}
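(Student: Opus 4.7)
The plan is to view $v_l(\eps,\delta,\s,\cdot)$ as a regular perturbation of $v_{l0}(\s_*,\cdot)$: the coefficients of the ODE \eqref{eq:Elinzetadelta} converge to those of the limiting equation \eqref{eq:Elinzeta} thanks to Proposition~\ref{prop:q->q0}, and the Cauchy data at $\z_\eps$ match in the limit thanks to the exit conditions from Section~\ref{subsection:conditionssortie} combined with the $\mathcal{C}^1$ regularity of $v_{l0}$ at $\z=0$ supplied by Proposition~\ref{prop:vreguliereens}.

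First I would check the convergence of the Cauchy data at $\z_\eps$. Translating \eqref{eq:sortiezonefroide} from $\xi=x/\eps$ to $\z=x/\delta$ coordinates, and using the identity $\delta s/c_\eps=(m-2)\s$, gives
\[
v_l(\eps,\delta,\s,\z_\eps)=1,\qquad \partial_\z v_l(\eps,\delta,\s,\z_\eps)=-(m-2)\s+o(1)
\]
as $(\eps,\delta,\s)\to(0,0,\s_*)$, locally uniformly in $\s$. On the other hand $v_{l0}(\s_*,\cdot)$ is $\mathcal{C}^1$ at $0$ with $v_{l0}(\s_*,0)=1$ and $v_{l0}'(\s_*,0)=-(m-2)\s_*$ (Proposition~\ref{prop:vreguliereens}), so evaluation at $\z_\eps\to 0$ yields the same asymptotic Cauchy data. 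Next, Proposition~\ref{prop:q->q0}, together with $c_\eps\to c_0$ and $\delta G'(\delta q)\to 0$, implies that each coefficient of the operator in \eqref{eq:Elinzetadelta} converges uniformly on any fixed $[\bar\z,\z_0]$ to the corresponding coefficient of the asymptotic operator in \eqref{eq:Elinzeta}.

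Fix now a small $\bar\z\in(0,\z_0)$. On $[\bar\z,\z_0]$ the leading coefficient $q$ is uniformly bounded below by a positive constant, so the ODE is uniformly non-degenerate and classical continuous dependence on coefficients and initial data delivers $\|v_l-v_{l0}\|_{\mathcal{C}^1([\bar\z,\z_0])}\to 0$, provided $(v_l(\bar\z),v_l'(\bar\z))\to(v_{l0}(\bar\z),v_{l0}'(\bar\z))$. To secure this intermediate Cauchy data, and simultaneously obtain $\mathcal{C}^1$ convergence on the shrinking interval $[\z_\eps,\bar\z]$, I would recast the ODE as a $2\times 2$ first-order system in $(v,v')$: after dividing by $q$, the coefficient of $v'$ is $(c_\eps-2q'/(m-2))/q$, whose limit $-1/((m-2)\z)$ is \emph{integrable} at $0$. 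The fundamental matrix of the limiting system therefore remains uniformly bounded on $[\z_\eps,\bar\z]$, uniformly in small $(\eps,\delta)$. Writing $w=v_l-v_{l0}$ in Duhamel form, one bounds $w$ by the Cauchy deviation at $\z_\eps$ (small by the first step) plus a forcing term coming from the coefficient difference (small by the second step, after possibly shrinking $\bar\z$), and a Gronwall argument then yields $\|w\|_{\mathcal{C}^1([\z_\eps,\bar\z])}\to 0$.

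The main obstacle is precisely this last piece: the interval $[\z_\eps,\bar\z]$ collapses onto the regular singular point $\z=0$ of the limiting operator, and a naive Gronwall estimate would diverge. What saves the argument is the first-kind nature of the singularity --- the very feature behind the $\mathcal{C}^1$ regularity in Proposition~\ref{prop:vreguliereens} --- which renders the offending coefficient integrable near $0$ and keeps the resolvent kernel of the linearized limiting first-order system uniformly bounded down to $\z=0$, so that the shrinking interval does not destroy the comparison.
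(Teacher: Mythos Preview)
Your overall plan coincides with the paper's: compare $v_l$ with $v_{l0}$ by matching Cauchy data at $\z_\eps$, using Proposition~\ref{prop:q->q0} for the coefficients, splitting $[\z_\eps,\z_0]$ into a near-singularity piece $[\z_\eps,\bar\z]$ and a regular piece $[\bar\z,\z_0]$, and invoking classical continuous dependence on the latter. The paper carries this out by setting $z=v_l-v_{l0}$ and estimating $Lz=(L_0-L)v_{l0}=o(1)$ directly in second-order form, where the forcing is genuinely $L^\infty$-small because the coefficients of $L-L_0$ are $o(1)$ \emph{before} any division by $q$.

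The gap is in your treatment of $[\z_\eps,\bar\z]$. You divide by $q$ and assert that the resulting $v'$-coefficient, whose limit is $-\frac{1}{(m-2)\z}$, is ``integrable at $0$''. It is not: $\int_0 \z^{-1}\,d\z$ diverges, and this is precisely what ``singularity of the first kind'' means. The boundedness of the forward propagator of the limiting first-order system does hold, but for a different reason (the Frobenius exponents of the residue matrix are $0$ and $-\frac{1}{m-2}$, and the corresponding solution matrix $\Phi_0(\z)\Phi_0(s)^{-1}$ happens to stay bounded for $s\le\z$); it does not follow from integrability. More seriously, your Gronwall step requires controlling $\int_{\z_\eps}^{\bar\z}|A(s)-A_0(s)|\,ds$, and after dividing by $q\sim(m-2)c_0\z$ the entries of $A-A_0$ acquire factors of $1/q\sim 1/\z$ (and in places $1/\z^2$) multiplied by the $o(1)$ errors from Proposition~\ref{prop:q->q0}; showing this integral stays bounded as $\z_\eps\to 0$ needs quantitative rates that the $\mathcal C^2$ convergence alone does not provide. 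This is exactly the difficulty the paper flags (``the asymptotic Cauchy problem degenerates at $\z=0$, preventing classical regularity arguments'') and sidesteps by never dividing by $q$: in the second-order formulation the forcing $(L_0-L)v_{l0}$ involves only the \emph{undivided} coefficient differences $q-q_0$, $q'-q_0'$, $q''$, $c_\eps-c_0$, all of which are uniformly $o(1)$ on $[\z_\eps,\bar\z]$, and one then exploits the integrating factor $\z^{1/(m-2)}$ built into $L$ to propagate smallness of $(z,z')$ from $\z_\eps$ to $\bar\z$.
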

Note how we handle simultaneously the double limit $(\eps,\delta)\to(0,0)$ and continuity with respect to $\s$.
\begin{proof}
For $\eps,\delta>0$ the left branch $v_l=v_l(\eps,\delta,\s,\z)$ solves \eqref{eq:Elinzetadelta}:
$$
Lv_l=0,\quad L=-q\frac{d^2}{d\z^2}+\left(c-\frac{2q'}{m-2}\right)\frac{d}{d\z}+\left(q^{\frac{m}{m-2}}-(m-2)c\s-q''-\delta G'(\delta q)\right),
$$ 
while the asymptotic left branch $v_{l0}=v_{l0}(\s_*,\z)$ solves \eqref{eq:Elinzeta}:
$$
L_0v_{l0}=0,\quad L_0=-(m-2)c_0\z\frac{d^2}{d\z^2}-c_0\frac{d}{d\z}+(m-2)c_0(b(\z)-\s_*).
$$
Proposition~\ref{prop:q->q0} guarantees that the coefficients of $L$ are very close to those of $L_0$ when $(\eps,\delta,\s)\to(0,0,\s_*)$. Boundary conditions \eqref{eq:sortiezonefroide} moreover read in $\z$ coordinates
$$
v_{l}(\z_{\eps})=1,\qquad v_l'(\z_{\eps})\underset{\eps,\delta}{\sim} -(m-2)\s
$$
with $\z_{\eps}\rightarrow 0$. According to Proposition~\ref{prop:vreguliereens} the asymptotic left branch satisfies
$$
v_{l0}(0)=1,\qquad v_{l0}'(0)=-(m-2)\s_*,
$$
and we see that initial values are also very close. These two branches solve two very similar Cauchy problems, and should therefore stay very close on the interval $[\z_{\eps},\z_0]$. The difficulty is here that the asymptotic Cauchy problem degenerates at $\z=0$ when $\eps,\delta\rightarrow 0$, thus preventing from applying classical regularity of Cauchy solutions with respect to parameters and initial values. In order to keep this paper in a reasonable length we only sketch the argument below.
\par
The proof relies on a stability argument for $z:=v_l-v_{l0}$, which satisfies $z(\z_{\eps})=o(1)$, $z'(\z_{\eps})=o(1)$ and $Lz=f$ with $f=(L_0-L)v_{l0}=o(1)$. Using the very structure of the equation one shows that $z$ stays small in $\mathcal{C}^1$ norm for times $\z\in[\z_{\eps},\o{\z}]$, where $\o{\z}>0$ stays bounded away from zero when $(\eps,\delta,\s)\rightarrow(0,0,\s_*)$. This allows to step away from the singularity $\z=0$, and we conclude on $[\o{\z},\z_0]$ by classical regularity arguments.
\end{proof}
We have a similar result for the $\s$-derivative:
\begin{prop}
Let $z_l=\frac{\partial v_l}{\partial\s}$ and $z_{l0}=\frac{\partial v_{l0}}{\partial \s}$. For fixed $\z_0>0$ and $\s_*$ we have $z_l(\eps,\delta,\s,.)\to z_{l0}(\s_*,.)$ in $\mathcal{C}^1([\z_{\eps},\z_0])$ when $(\eps,\delta,\s)\rightarrow (0,0,\s_*)$ in the double limit \eqref{eq:regimefreqdelta}.
\label{prop:cauchystab(vg)sigma}
\end{prop}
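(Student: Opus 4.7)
The strategy parallels that of Proposition~\ref{prop:vg->vg0}, with the additional ingredient that $v_l\to v_{l0}$ has already been established, which will enable control of the non-homogeneous source terms appearing after differentiation in $\s$.

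Since $L$ in \eqref{eq:Elinzetadelta} depends on $\s$ only through the zero-th order term $-(m-2)c\s$, differentiating $Lv_l=0$ in $\s$ yields
\begin{equation*}
Lz_l=(m-2)c_{\eps}\,v_l,
\end{equation*}
and similarly $L_0 z_{l0}=(m-2)c_0\,v_{l0}$. The exit conditions for $v_l$ from \eqref{eq:sortiezonefroide} (normalized to $v_l(\z_\eps)=1$) give, after differentiation in $\s$, $z_l(\z_\eps)=0$, while rescaling \eqref{eq:dv'/dsigma} from $\xi$ to $\z$ coordinates by the factor $(\eps k^{1-1/(m-1)})^{-1}$ yields
\begin{equation*}
z_l'(\z_\eps)=-(m-2)+o(1)
\end{equation*}
when $(\eps,\delta,\s)\to(0,0,\s_*)$ in $\omega$. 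By Proposition~\ref{prop:vreguliereens} differentiated in $\s$, the asymptotic branch satisfies $z_{l0}(0)=0$ and $z_{l0}'(0)=-(m-2)$, so initial data match in the limit.

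Setting $w:=z_l-z_{l0}$, I would estimate $w$ exactly as in Proposition~\ref{prop:vg->vg0}. It solves
\begin{equation*}
Lw=(m-2)c_{\eps}\,v_l-(m-2)c_0\,v_{l0}+(L_0-L)z_{l0},
\end{equation*}
whose right-hand side is $o(1)$ uniformly on compact subsets of $]0,\z_0]$: the term $(L_0-L)z_{l0}$ is controlled by Proposition~\ref{prop:q->q0} together with smoothness of $z_{l0}$, and the remaining source $(m-2)(c_{\eps}v_l-c_0 v_{l0})$ is $o(1)$ in $\mathcal{C}^0$ thanks to $c_{\eps}\to c_0$ and Proposition~\ref{prop:vg->vg0}. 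Together with $w(\z_\eps)=o(1)$ and $w'(\z_\eps)=o(1)$, a stability argument for the linear equation $Lw=f$ propagates smallness in $\mathcal{C}^1$ from $\z=\z_\eps$ up to some $\overline{\z}>0$ bounded away from $0$ uniformly in the parameters, thereby stepping outside the singularity of the limiting operator at $\z=0$.

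The main obstacle, as in Proposition~\ref{prop:vg->vg0}, is precisely this degeneracy: the coefficient $q(\z)\sim(m-2)c_0\z$ in front of $\partial_{\z\z}$ vanishes at $\z=0$, so the standard continuous dependence of Cauchy solutions on initial data and parameters does not apply on $[\z_\eps,\overline{\z}]$ in a uniform way as $\z_\eps\to 0$. This is overcome exactly as before by exploiting the first-kind nature of the singularity to obtain a Gronwall-type estimate in the integral formulation associated with the Duhamel formula \eqref{eq:formulintv(zeta)}, which yields smallness of $w$ up to $\overline{\z}$. On the regular interval $[\overline{\z},\z_0]$ one concludes by classical Cauchy stability applied to the linear non-homogeneous equation $Lw=f$ with parameters $(\eps,\delta,\s)$.
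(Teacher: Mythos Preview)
Your proposal is correct and follows essentially the same approach as the paper: differentiate the equations $Lv_l=0$ and $L_0v_{l0}=0$ in $\s$ to obtain linear non-homogeneous equations for $z_l$ and $z_{l0}$, match the initial data at $\z_\eps$ via Proposition~\ref{prop:sortieZFd/dsigma} (rescaled to $\z$ coordinates) against $z_{l0}(0)=0$, $z_{l0}'(0)=-(m-2)$, and then run the same stability argument for $w=z_l-z_{l0}$ as in Proposition~\ref{prop:vg->vg0}. The paper's own proof is a two-line sketch pointing back to that previous proposition; you have simply written out the details (the explicit form $Lz_l=(m-2)c_\eps v_l$, the equation for $w$, and the reason each source term is small), all of which are accurate.
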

\begin{proof}
The proof is very similar to the previous one: differentiating $Lv_l=0$ and $L_0v_{l0}=0$ with respect to $\s$ we see that $z_l$ and $z_{l0}$ solve again two very close equations, and Proposition~\ref{eq:dv'/dsigma} with $v_{l0}(0)=1$ and $v_{l0}'(0)=-(m-2)\s$ show that initial values are also close in the above limit.
\end{proof}
\par
The delicate issue for the right branches is to take into account the decay $v_r(+\infty)=0$, which turns to be a slightly more difficult problem than the previous singularity at $\z=0$ for the left branches. We first establish two decay estimates on $v_r,v_r'$ at infinity and uniformly in $\eps,\delta,\s$:
\begin{lem}
There exist $\z_0>0$ and $C>0$ locally independent of $\s$ such that, if $\eps,\delta$ are small enough in the double limit \eqref{eq:regimefreqdelta}, then $0\leq v_r(\eps,\delta,\s,\z)\leq Ce^{-\z}$ holds on $[\z_0,+\infty[$.
\label{lem:sursolutionvd}
\end{lem}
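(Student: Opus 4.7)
The plan is to exhibit $\bar v(\z):=Ce^{-\z}$ as a supersolution of \eqref{eq:Elinzetadelta} on $[\z_0,+\infty[$ for suitable $C,\z_0$, and to sandwich $v_r$ between $0$ and $\bar v$ by the standard maximum principle. Recasting the equation as $Lv=-qv''+\alpha v'+\beta v=0$ with $\alpha(\z):=c-\tfrac{2q'}{m-2}$ and
$$\beta(\z):=q^{\frac{m}{m-2}}-(m-2)c\s-q''-\delta G'(\delta q),$$
the linchpin is that, on $[\z_0,+\infty[$ with $\z_0$ fixed large enough, the zero-th order coefficient $\beta$ is bounded below by a positive constant, uniformly in $(\eps,\delta)\in\omega$ and locally in $\s$. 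This positivity both validates the weak maximum principle and delivers the dominant sign in $L\bar v$.

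A direct computation gives
$$\frac{L\bar v}{\bar v}=\beta(\z)-q(\z)+\frac{2q'(\z)}{m-2}-c.$$
By Proposition~\ref{prop:q->q0}, in the linear zone $[\z_\eps,\z_\delta]$ the profile satisfies $q_{\eps,\delta}\to(m-2)c_0\z$ in $\mathcal{C}^2$, so $q''\to 0$ and $\delta G'(\delta q)=O(\delta)$. Hence for $\z\in[\z_0,\z_\delta]$,
$$\beta(\z)-q(\z)\underset{(\eps,\delta)\to(0,0)}{\longrightarrow}\bigl((m-2)c_0\z\bigr)^{\frac{m}{m-2}}-(m-2)c_0\s-(m-2)c_0\z,$$
which is bounded below by a positive constant on $[\z_0,+\infty[$ as soon as $\z_0$ is fixed large enough (depending only on an $\s$-bound), since $\tfrac{m}{m-2}>1$. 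Past the linear zone one has $q(\z)\in[\theta/\delta,1/\delta]$, so $q^{m/(m-2)}/q\geq(\theta/\delta)^{2/(m-2)}\to+\infty$ while $q''$ and $\delta G'(\delta q)$ remain controlled. Thus $L\bar v\geq 0$ uniformly on $[\z_0,+\infty[$ for $\eps,\delta$ small, and we fix $C:=e^{\z_0}$ so that $\bar v(\z_0)=1=v_r(\z_0)$.

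To conclude, set $w:=v_r-\bar v$. Then $Lw=-L\bar v\leq 0$, $w(\z_0)=0$, and $w(+\infty)=0$. If $w$ attained a positive interior maximum at some $\z_*\in(\z_0,+\infty)$, then $w'(\z_*)=0$, $w''(\z_*)\leq 0$ and $w(\z_*)>0$ would yield $Lw(\z_*)=-q(\z_*)w''(\z_*)+\beta(\z_*)w(\z_*)>0$, a contradiction. Hence $w\leq 0$, i.e. $v_r\leq Ce^{-\z}$. Applying the same argument to $-v_r$ (which satisfies $L(-v_r)=0$ and has boundary values $-1$ at $\z_0$ and $0$ at infinity) rules out any positive interior maximum and yields $v_r\geq 0$. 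The main difficulty is securing the uniform lower bound on $\beta$ across both the linear zone — where $\beta$ grows like $\z^{m/(m-2)}$ — and the outer region $\delta\z>x_\theta$ — where $\beta$ grows like $\delta^{-m/(m-2)}$; the $\mathcal{C}^2$ convergence of $q_{\eps,\delta}$ supplied by Proposition~\ref{prop:q->q0} is the essential input that keeps $\z_0$ independent of $(\eps,\delta,\s)$.
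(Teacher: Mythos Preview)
Your proof is correct and follows essentially the same approach as the paper's: show that the zero-th order coefficient $\beta$ is uniformly positive on $[\z_0,+\infty[$ to enable the Minimum Principle for positivity, and then verify that $\bar v=Ce^{-\z}$ is a supersolution to obtain the upper bound via comparison. Your treatment is somewhat more detailed than the paper's---you explicitly split the supersolution estimate $L\bar v\geq 0$ into the linear zone $[\z_0,\z_\delta]$ (using Proposition~\ref{prop:q->q0}) and the outer zone $\z>\z_\delta$ (using $q\in[\theta/\delta,1/\delta]$), and you spell out the interior-maximum contradiction---whereas the paper compresses these into ``it is easy to check''.
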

\begin{proof}
Let us recall that $v_r$ solves the elliptic equation $Lv_r=0$, with
$$
L:=-q\frac{d^2}{d\z^2}+\left(c-\frac{2q'}{m-2}\right)\frac{d}{d\z}+\left(q^{\frac{m}{m-2}}-(m-2)c\s-q''-\delta G'(\delta q)\right).
$$
The key point is that, if $\z_0$ is chosen large, the zero-th order coefficient
\begin{equation}
a_0:=q^{\frac{m}{m-2}}-\underbrace{(m-2)c\s}_{\mathcal{O}(1)}-\underbrace{q''-\delta G'(\delta q)}_{\mathcal{O}(\delta)}
\label{eq:ordre0>0}
\end{equation}
can be made larger on $[\z_0,+\infty[$ than some positive constant locally independent of $\s$: since $q(\z)\sim (m-2)c_0\z$ it is indeed enough to take $\z_0>\left(3(m-2)\s\right)^{\frac{m-2}{m}}$ and thus $a_0(\z)\geq (m-2)c_0\s\geq cst>0$ (locally in $\s$). As a consequence we may apply the classical Minimum Principle $Lv_r=0$ with $v_r(\z_0)=1$ and $v_r(+\infty)=0$ to retrieve positivity. To obtain the upper estimate, remark that at infinity the asymptotic equation associated with $Lv_r=0$ implies that $v_r$ behaves as $e^{-r\z}$ for some positive $r\underset{\eps,k,\s}{\gg} 1$: it is then easy to check that $\o{v}=Ce^{-\z}$ is a supersolution for $v_r$ if $C>0$ is chosen large enough (but independent of $\eps$ and $\delta$ and again locally of $\s$). \end{proof}
We will also need a similar estimate for the first derivative:
\begin{lem}
There exist $\z_0>0$ and $C>0$ locally independent of $\s$ such that, if $(\eps,\delta)$ are small enough in the double limit \eqref{eq:regimefreqdelta}, then $|v_r'(\z)|\leq \frac{C}{\delta^{\frac{4}{m-2}}}e^{-\z}$ holds on $[\z_0,+\infty[$.
 \label{lem:sursolutionvd'}
\end{lem}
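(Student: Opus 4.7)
The plan is to bound $|v_r'|$ by integrating the equation and invoking the exponential decay already established for $v_r$ in Lemma~\ref{lem:sursolutionvd}. My first step is to show that $v_r'\leq 0$ on $[\z_0,+\infty[$. By the previous lemma $v_r>0$ strictly there (strict positivity follows from uniqueness of Cauchy solutions together with $v_r(\z_0)=1$) and $v_r(+\infty)=0$. If $v_r$ admitted an interior local maximum at some $\z^*$, then $v_r'(\z^*)=0$ and $v_r''(\z^*)\leq 0$, so plugging into $-qv_r''+\alpha v_r'+a_0v_r=0$ (with $\alpha:=c-2q'/(m-2)$) yields $a_0(\z^*)v_r(\z^*)=q(\z^*)v_r''(\z^*)\leq 0$, contradicting $a_0>0$ from \eqref{eq:ordre0>0} and $v_r>0$. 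Hence $v_r$ is monotone decreasing. Moreover the asymptotic behaviour of the equation at $+\infty$ (where all coefficients stabilize) forces both exponential modes to vanish along the stable branch, so $v_r'(+\infty)=0$ as well.

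Solving for the second derivative as $v_r''=(\alpha v_r'+a_0v_r)/q$ and integrating from $\z$ to $+\infty$ then gives
$$-v_r'(\z)=\int_\z^{+\infty}\frac{\alpha(s)\,v_r'(s)+a_0(s)\,v_r(s)}{q(s)}\,ds,$$
and it remains to bound the two integrands uniformly in $(\eps,\delta)$. The scaled profile $q(\z)=\delta^{-1}p_\eps(\delta\z)$ is nondecreasing with $q(\z_0)$ bounded away from zero, and $q'=p_\eps'(\delta\z)\leq(m-2)c_\eps$ is uniformly bounded, so $|\alpha/q|$ is bounded uniformly on $[\z_0,+\infty[$. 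Since $p_\eps\leq 1$ we have $q\leq 1/\delta$ everywhere, whence $a_0/q\leq Cq^{2/(m-2)}\leq C\delta^{-2/(m-2)}$. Using these estimates together with $\int_\z^{+\infty}|v_r'(s)|\,ds=v_r(\z)\leq Ce^{-\z}$ (which follows from $v_r'\leq 0$ and Lemma~\ref{lem:sursolutionvd}) and $v_r(s)\leq Ce^{-s}$ yields
$$|v_r'(\z)|\leq \Big\|\frac{\alpha}{q}\Big\|_\infty v_r(\z)+C\delta^{-\frac{2}{m-2}}\int_\z^{+\infty}e^{-s}\,ds\leq C'\delta^{-\frac{2}{m-2}}e^{-\z},$$
which is in fact stronger than the claimed $C\delta^{-4/(m-2)}e^{-\z}$.

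The only mildly delicate ingredient is the uniform estimate $a_0/q\leq C\delta^{-2/(m-2)}$, which requires controlling $q$ both in the linear zone $\z\in[\z_\eps,\z_\delta]$ (where $q\sim(m-2)c_0\z$ up to $\z_\delta\sim\delta^{-1}$) and past it (where $q$ saturates at $\sim 1/\delta$ as $p_\eps\to 1$); this is a direct consequence of Proposition~\ref{prop:peps->p0} and the asymptotes \eqref{eq:asymptotvarx}. An alternative route would be to differentiate $Lv_r=0$ and set up a supersolution for $w:=-v_r'$ in the spirit of Lemma~\ref{lem:sursolutionvd}, but the direct integration above is cleaner and avoids manipulating $a_0'$ and the higher-order equation.
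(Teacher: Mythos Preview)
Your proof is correct and takes a genuinely different route from the paper's. The paper rewrites the equation as $v_r''+av_r'=bv_r$ and uses the Duhamel/integrating-factor representation
\[
v_r'(\z)=-\int_\z^{+\infty}\exp[A(s)-A(\z)]\,b(s)v_r(s)\,ds,\qquad A=\textstyle\int a,
\]
then estimates separately $\exp[A(s)-A(\z)]=(q(s)/q(\z))^{2/(m-2)}\exp(-\int_\z^s c/q)\leq C\delta^{-2/(m-2)}$ and $|b(s)|\leq 2q^{2/(m-2)}\leq 2\delta^{-2/(m-2)}$, which together with Lemma~\ref{lem:sursolutionvd} gives the stated $\delta^{-4/(m-2)}$. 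Your approach avoids the integrating factor altogether by first proving the sign information $v_r'\leq 0$ via the maximum principle (using $a_0>0$ from \eqref{eq:ordre0>0}), then integrating $v_r''$ directly and exploiting $\int_\z^\infty|v_r'|=v_r(\z)\leq Ce^{-\z}$. This buys you a full power of $\delta^{2/(m-2)}$: your bound $C\delta^{-2/(m-2)}e^{-\z}$ is strictly sharper than the paper's. The price is the extra monotonicity step and the justification that $v_r'(+\infty)=0$, both of which you handle correctly (in fact, once you know the integral $\int_\z^\infty v_r''$ converges absolutely from your own bounds, existence of $\lim v_r'$ follows, and $v_r'\leq 0$ with $v_r\to 0^+$ forces that limit to be zero without invoking the asymptotic equation). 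Both arguments are short; yours is slightly more elementary and quantitatively better, while the paper's is more mechanical and does not require the preliminary sign analysis.
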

\begin{proof}
Fix any $\z_0>0$ large enough as in Lemma~\ref{lem:sursolutionvd}: dividing \eqref{eq:Elinzetadelta} by $q>0$ we recast $Lv_r=0$ as
$$
v_r''+a v_r'=b v_r\qquad \text{and}\qquad\left\{
\begin{array}{l}
a  :=\frac{1}{q}\left(\frac{2q'}{m-2}-c\right),\\
b  := \frac{1}{q}\left(q^{\frac{m}{m-2}}-(m-2)c\s-q''-\delta G'(\delta q)\right),
\end{array}
\right.
$$
and obtain the corresponding integral formulation
$$
v_r'(\z)=-\int\limits_{\z}^{+\infty}\exp\left[A(s)-A(\z)\right]b(s)v_r(s)\mathrm{d}s
$$
with $A(s):=\int^s a$. Taking advantage of the explicit form of $a(\z)$ combined with $q(\z)\geq q(\z_0)\approx (m-2)c_0\z_0$ and $q(s)\leq q(+\infty)=\frac{1}{\delta}$ it is easy to estimate
$$
s\geq \z\geq \z_0:\quad
\left\{
\begin{array}{l}
\exp\left[A(s)-A(\z)\right]=\left(\frac{q(s)}{q(\z)}\right)^{\frac{2}{m-2}}\underbrace{\mathrm{exp}\left(-\int\limits_{\z}^s\frac{c}{q}\right)}_{\leq 1}\leq \frac{C}{\delta^{\frac{2}{m-2}}},\\
|b(s)|\leq 2 q^{\frac{m}{m-2}-1}\leq 2\left(\frac{1}{\delta}\right)^{\frac{m}{m-2}-1}
\end{array}
\right.
$$
if $\z_0>0$ is large enough. Our statement finally follows from Lemma~\ref{lem:sursolutionvd} above.
 \end{proof}
%

We have now for the right branch
\begin{prop}
For any $\s_*$ there exits $\z_0>0$ large enough such that $v_r(\eps,\delta,\s,.)\to v_{r0}(\s_*,.)$ in $\mathcal{C}^1([\z_0,+\infty[)$ when $(\eps,\delta,\s)\rightarrow (0,0,\s_*)$ in the double limit \eqref{eq:regimefreqdelta}; $\z_0$ can be chosen locally independent of $\s_*$.
\label{prop:vd->vd0}
\end{prop}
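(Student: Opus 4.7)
The strategy parallels Proposition~\ref{prop:vg->vg0} with the roles of $\z=0$ and $\z=+\infty$ exchanged: the singularity at the origin that governed the selection of the left branch is replaced here by the decay condition at $+\infty$ that selects the right branch. My first step is to fix $\z_0$ large enough so that the zero-th order coefficient of $L$ in \eqref{eq:Elinzetadelta}, namely $q^{m/(m-2)}-(m-2)c\s-q''-\delta G'(\delta q)$, is bounded below by a strictly positive constant on $[\z_0,+\infty[$, uniformly in $\s$ near $\s_*$ and in $(\eps,\delta)$ in the regime \eqref{eq:regimefreqdelta}. By Proposition~\ref{prop:q->q0} this can be achieved by taking $\z_0$ of order $(3(m-2)\s_*/B)^{(m-2)/m}$, so that Lemmas~\ref{lem:sursolutionvd}-\ref{lem:sursolutionvd'} apply to $v_r$ and their $\eps=\delta=0$ analogues (cf. Remark~\ref{rmk:eqasymptotiquezeta}) apply to $v_{r0}$.

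Next I would set $z:=v_r-v_{r0}$, which solves the linear boundary value problem $Lz=(L_0-L)v_{r0}=:f$ on $[\z_0,+\infty[$ with $z(\z_0)=0$ (since both branches are normalized to $1$ at $\z_0$) and $z(+\infty)=0$. By Proposition~\ref{prop:q->q0} and smoothness of $v_{r0}$, the forcing $f$ converges to $0$ uniformly on every compact sub-interval $[\z_0,\z_1]$ as $(\eps,\delta,\s)\to(0,0,\s_*)$.

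The $\mathcal{C}^1$ convergence on the unbounded interval is then obtained by a tail/compact splitting. For the tail $[\z_1,+\infty[$ Lemma~\ref{lem:sursolutionvd} yields $|v_r|+|v_{r0}|<\varepsilon$ provided $\z_1$ is large enough, uniformly in $(\eps,\delta,\s)$. The analogous smallness for the first derivatives is the delicate step: Lemma~\ref{lem:sursolutionvd'} carries a $\delta^{-4/(m-2)}$ prefactor, and I would combine it with the observation that beyond $\z_\delta$ the profile $q$ saturates at $1/\delta$, so that $v_r(\z_\delta)$ is already super-exponentially small (inherited from the decay on the linear zone $[\z_0,\z_\delta]$), thereby compensating the prefactor. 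On the remaining compact interval $[\z_0,\z_1]$ the equation is non-singular, so I would reformulate $Lz=f$ as a linear Cauchy problem and integrate backward from $\z_1$: the data $(z(\z_1),z'(\z_1))$ are small by the tail step and the coefficients converge uniformly to those of $L_0$ by Proposition~\ref{prop:q->q0}, so classical continuous dependence of linear Cauchy solutions on parameters and initial data yields $\|z\|_{\mathcal{C}^1([\z_0,\z_1])}\to 0$.

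Finally, joint continuity in $\s$ is obtained by combining the previous step with the $\mathcal{C}^1_b$-analyticity of $v_{r0}(\s,\cdot)$ furnished by Proposition~\ref{prop:vdanalytique}, which lets the triple limit $(\eps,\delta,\s)\to(0,0,\s_*)$ be taken at once. I expect the hard part to be the uniform control of the derivatives at the splitting point $\z_1$ described above: this is the single place where the proof genuinely uses the specific saturation structure of the scaled wave profile $q_{\eps,\delta}$ past the linear zone, and it cannot be bypassed by purely abstract perturbation arguments applied to $L_0$ alone.
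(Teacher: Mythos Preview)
Your strategy is correct and close in spirit to the paper's, but the two proofs differ in one structural choice: you split at a \emph{fixed} large point $\z_1$, whereas the paper splits at the \emph{moving} point $\z_{\delta}=x_{\theta}/\delta\to+\infty$. This has consequences on both sides of the split. On the inner piece the paper does not rely on classical continuous dependence on a compact interval; instead it works on the growing interval $[\z_0,\z_{\delta}]$ and obtains the $\mathcal{C}^0$ smallness of $w=v_r-v_{r0}$ via the maximum principle, using the constant supersolution $\overline{w}=|w(\z_{\delta})|+\|f\|_{\infty}$ (which is legitimate precisely because the zero-th order coefficient of $L$ is bounded below). The $\mathcal{C}^1$ bound is then obtained by integrating backward from $\z_{\delta}$, feeding in the already established $\mathcal{C}^0$ smallness of $w$. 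Your route trades this maximum-principle step for a straightforward Gr\"onwall argument on the fixed compact $[\z_0,\z_1]$, which is cleaner, but forces you to control $v_r'$ uniformly on the whole tail $[\z_1,+\infty[$, including the growing linear sub-zone $[\z_1,\z_{\delta}]$.

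On that point your sketch is right in spirit but incomplete as written. You invoke the super-exponential smallness of $v_r(\z_{\delta})$ to beat the $\delta^{-4/(m-2)}$ prefactor, which handles $[\z_{\delta},+\infty[$; but you still need a $\delta$-independent bound on $|v_r'|$ over $[\z_1,\z_{\delta}]$, and Lemma~\ref{lem:sursolutionvd'} as stated does not give it. What makes your approach work is to redo the integral representation in the proof of Lemma~\ref{lem:sursolutionvd'} and split it at $\z_{\delta}$: on $[\z_1,\z_{\delta}]$ one may use $q\approx(m-2)c_0\z$ instead of the crude $q\le 1/\delta$, which yields $e^{A(s)-A(\z)}\lesssim (s/\z)^{1/(m-2)}$ and $b(s)\lesssim s^{2/(m-2)}$, hence a $\delta$-free bound $|v_r'(\z)|\lesssim \z^{-1/(m-2)}\int_{\z}^{\infty}s^{3/(m-2)}e^{-s}\,ds$ that is small for $\z\ge\z_1$ large; on $[\z_{\delta},+\infty[$ Lemma~\ref{lem:sursolutionvd'} applies verbatim and $e^{-\z_{\delta}}$ absorbs the prefactor. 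Once this refinement is stated, your fixed-$\z_1$ argument goes through and is arguably tidier than the paper's backward integration on a growing interval.
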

\begin{proof}
From $Lv_r=0$ and $L_0v_{r0}=0$ we see that $w:=v_r-v_{r0}$ satisfies as before $Lw=f:=(L_0-L)v_{r0}$ on $[\z_0,+\infty[$. Contrarily to the left branches the coefficients of $L$ are not close to those of $L_0$  on the whole interval $[\z_0,+\infty[$, but only on $[\z_0,\z_{\delta}]$ (this is because of our $\z$-scaling, see Proposition~\ref{prop:q->q0}). From \eqref{eq:Elinzeta} it is however easy to see that $v_{r0},v_{r0}',v_{r0}''$ decay at least exponentially at infinity, and locally uniformly in $\s_*$. According to Lemma~\ref{lem:sursolutionvd} and Lemma~\ref{lem:sursolutionvd'} we know that $v_r,v_r'$ also decay exponentially, and since $\z_{\delta}=\frac{x_{\theta}}{\delta}\underset{\eps,\delta}{\to}+\infty$ it is clearly enough to prove our statement on the subinterval $I=[\z_0,\z_{\delta}]$ .
\par
Remark that (i) the zero-th order coefficient of $L$ can be made large and positive, see \eqref{eq:ordre0>0}, (ii) $\mathcal{C}^2$ convergence of the wave profile $q_{\eps,\delta}\to q_0$ implies that $f=(L_0-L)v_{r0}$ is small in $L^{\infty}$ norm on $I$ and (iii) $w$ is exponentially small on the right boundary and vanishes on the left one, see Lemma~\ref{lem:sursolutionvd}. Using $\overline{w}:=|w|(\z_{\delta})+||f||_{\infty}=cst$ as a supersolution $L\o{w}\geq f$ we obtain $|w|\leq \overline{w}$ on $I=[\z_0,\z_{\delta}]$ and $\overline{w}=o(1)$ by construction. As a consequence $v_r$ and $v_{r0}$ are close in $L^{\infty}$ on $[\z_0,+\infty[$, and we are left to control the derivatives.
\par
According to Lemma~\ref{lem:sursolutionvd'} we have that $|v_r'|\leq \frac{C}{\delta^{\frac{4}{m-2}}}e^{-\z_{\delta}}=o(1)$ on $[\z_{\delta},+\infty[$, and $v_{r0}'$ also decays exponentially: thus $|w'|=\underset{\eps,\delta}{o}(1)$ uniformly on $[\z_{\delta},+\infty[$. In order to retrieve the same estimate on $[\z_0,\z_{\delta}]$ we integrate the equation for $w$ backward in time from $\z_{\delta}$, where $w,w'$ are small.
 \end{proof}

A similar result holds for the $\s$ derivative:
\begin{prop}
Let $z_r=\frac{\partial v_r}{\partial \s}$ and $z_{r0}=\frac{\partial v_{r0}}{\partial \s}$: for any fixed $\s_*$ there exists $\z_0>0$ such that $z_r(\eps,\delta,\s,.)\to z_{r0}(\s_*,.)$ in $\mathcal{C}^1([\z_0,+\infty[)$ when $(\eps,\delta,\s)\rightarrow (0,0,\s_*)$ in the double limit \eqref{eq:regimefreqdelta}; $\z_0$ can be chosen locally independent of $\s_*$.
\label{prop:vdsigma->vd0sigma}
\end{prop}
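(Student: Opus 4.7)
The plan is to mimic the proof of Proposition~\ref{prop:vd->vd0}, applied now to the derivatives $z_r=\partial_\s v_r$ and $z_{r0}=\partial_\s v_{r0}$. First I would differentiate the identities $Lv_r=0$ and $L_0 v_{r0}=0$ with respect to $\s$. Since $\s$ enters $L$ (respectively $L_0$) only through the zero-th order term $-(m-2)c\s$ (respectively $-(m-2)c_0\s$), this yields the inhomogeneous elliptic equations
\[
Lz_r = (m-2)c_\eps\,v_r, \qquad L_0 z_{r0} = (m-2)c_0\,v_{r0},
\]
together with the vanishing boundary data $z_r(\eps,\delta,\s,\z_0)=0=z_{r0}(\s_*,\z_0)$ obtained by differentiating the normalizations $v_r(\cdot,\z_0)\equiv 1\equiv v_{r0}(\cdot,\z_0)$ in $\s$.

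The next step is to establish uniform exponential decay estimates $|z_r(\z)|\leq Ce^{-\z}$ and $|z_r'(\z)|\leq C\delta^{-4/(m-2)}e^{-\z}$ on $[\z_0,+\infty[$, locally uniformly in $\s_*$. These are obtained exactly as in Lemmas~\ref{lem:sursolutionvd} and~\ref{lem:sursolutionvd'}: the source $(m-2)c_\eps v_r$ is $\mathcal{O}(e^{-\z})$ by Lemma~\ref{lem:sursolutionvd}, and the zero-th order coefficient $a_0$ of $L$ is uniformly large and positive on $[\z_0,+\infty[$ by \eqref{eq:ordre0>0}, so that $\overline{v}(\z)=Ce^{-\z}$ with $C$ large serves as a supersolution and the Minimum Principle applies. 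The derivative estimate then follows from the same integral representation of $z_r'$ used for $v_r'$ in Lemma~\ref{lem:sursolutionvd'}, since the source enters linearly.

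With those decay bounds at hand I would set $\widetilde{w}:=z_r-z_{r0}$, which solves
\[
L\widetilde{w} = (L_0-L)z_{r0} + (m-2)\bigl(c_\eps v_r - c_0 v_{r0}\bigr) =: \widetilde{f}.
\]
On $I=[\z_0,\z_\delta]$ the source $\widetilde{f}$ is $o(1)$ in $L^\infty$: the first term by the $\mathcal{C}^2$ convergence $q_{\eps,\delta}\to q_0$ (Proposition~\ref{prop:q->q0}) together with exponential decay of $z_{r0}$ and its derivatives, and the second by Proposition~\ref{prop:vd->vd0} and $c_\eps\to c_0$. Since $\widetilde{w}(\z_0)=0$ and $|\widetilde{w}(\z_\delta)|$ is exponentially small by the previous decay estimates, the constant supersolution $\overline{w}:=|\widetilde{w}(\z_\delta)|+\|\widetilde{f}\|_\infty = o(1)$ yields $\|\widetilde{w}\|_{L^\infty(I)}=o(1)$ by the Minimum Principle, and the exponential decay on $[\z_\delta,+\infty[$ takes care of the unbounded half. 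For the $\mathcal{C}^1$ part, I would integrate the second-order ODE satisfied by $\widetilde{w}$ backward from $\z_\delta$, where both $z_r'$ and $z_{r0}'$ are exponentially small, down to $\z_0$; the coefficients of $L$ are uniformly bounded on $I$, so Gronwall-type control gives $\|\widetilde{w}'\|_{L^\infty([\z_0,+\infty[)}\to 0$.

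The main obstacle is, as in Proposition~\ref{prop:vd->vd0}, that the operator $L$ does not converge to $L_0$ uniformly on the whole half-line $[\z_0,+\infty[$ but only on $[\z_0,\z_\delta]$, which forces the two-zone decomposition and makes the sharp uniform exponential decay estimates for $z_r$ and $z_r'$ indispensable. Ensuring that the constants in those estimates remain locally uniform in $\s_*$ is the delicate technical point, but it is guaranteed by the local uniformity already built into the bounds of Lemmas~\ref{lem:sursolutionvd}--\ref{lem:sursolutionvd'}.
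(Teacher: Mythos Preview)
Your proposal is correct and follows essentially the same route as the paper: the paper's proof simply states that one adapts the argument of Proposition~\ref{prop:vd->vd0} after first establishing decay estimates $|z_r(\z)|\leq Ce^{-\z}$ and $|z_r'(\z)|\leq C\delta^{-\alpha}e^{-\z}$ for some $\alpha=\alpha(m)>0$, analogous to Lemmas~\ref{lem:sursolutionvd}--\ref{lem:sursolutionvd'}. You have spelled out exactly this adaptation, including the differentiated equations, the supersolution argument for decay, the two-zone decomposition $[\z_0,\z_\delta]\cup[\z_\delta,+\infty[$, and the backward integration for the derivative control.
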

\begin{proof}
The proof of Proposition~\ref{prop:vd->vd0} is easily adapted, the first step being to prove two decay estimates $|z_r(\z)|\leq C e^{-\z}$ and $|z_r'(\z)|\leq \frac{C}{\delta^{\alpha}}e^{-\z}$ on $z=\frac{\partial v_r}{\partial \s}$ for some fixed exponent $\alpha=\alpha(m)>0$ (similar to Lemmas~\ref{lem:sursolutionvd} and \ref{lem:sursolutionvd'} above).
\end{proof}
%
%
\subsection{Evans function and construction of the eigenvalue}
\label{subsection:Evans}
In this section we construct the principal eigenfunction $\s=\s(\eps,\delta)$ connecting maximal decay $v_l(-\infty)=0$ and $v_r(+\infty)=0$ in the frequency regime \eqref{eq:regimefreqdelta}. Rewriting \eqref{eq:Elinzetadelta} as a first order system
$$
\frac{dY}{d\z}=A(\eps,\delta,\s;\z)Y,\qquad Y=\left(\begin{array}{c}
v\\
v' 
\end{array}
\right),
$$
we denote by $Y_l(\eps,\delta,\s,.)$ and $Y_r(\eps,\delta,\s,.)$ the previous left and right branches and define the Evans function
\begin{equation}
E(\eps,\delta,\s):=\det\big{[}Y_l(\eps,\delta,\s,\z_0),Y_r(\eps,\delta,\s,\z_0)\big{]}
\label{eq:defEvansdelta}
\end{equation}
for some $\z_0>0$ to be chosen later. Clearly the connection holds if and only if $E(\eps,\delta,\s)=0$, thus reducing the problem to finding the zeros $(\eps,\delta,\s)$ of $E$. Let us recall that the frequency open set $\omega$ is defined in \eqref{eq:regimefreqdelta}, and let us introduce
\begin{equation}
\dot{\omega}:=\omega\cup\{(0,0)\},\qquad \Omega:=\omega\times]\s_0-A,\s_0+A[,\qquad \dot{\Omega}:=\dot{\omega}\times]\s_0-A,\s_0+A[
\label{eq:defOmega}
\end{equation}
for some $A>0$. For $(\eps,\delta,\s)\in\Omega$ (hence $\eps,\delta>0$) there are no singularities in the equations: the two branches $Y_l,Y_r$ are well defined and regular in $(\eps,\delta,\s)$, and the Evans function $E(\eps,\delta,\s)$ is therefore well defined and $\mathcal{C}^1$ on $\Omega$. The set $\dot{\Omega}$ however contains critical points of the form $(0,0,\s_*)$, corresponding of course to the asymptotic problem $\eps=0,k=+\infty$.
\begin{prop}
Fix $\z_0>0$ large enough: the Evans function extends to a continuous function on $\dot{\Omega}$, again denoted by $E(\eps,\delta,\s)$. This extension is continuously differentiable with respect to $\s$ in $\dot{\Omega}$.
\label{prop:prolongementEvans}
\end{prop}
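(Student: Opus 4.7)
The natural candidate for the extension at a critical point $(0,0,\s_*)\in\dot{\Omega}\setminus\Omega$ is
$$
E(0,0,\s_*):=\det\bigl[Y_{l0}(\s_*,\z_0),\,Y_{r0}(\s_*,\z_0)\bigr],
$$
where $Y_{l0}=(v_{l0},v_{l0}')^T$ and $Y_{r0}=(v_{r0},v_{r0}')^T$ are built from the asymptotic left and right branches of Section~\ref{section:zonechaude}. The strategy is to fix $\z_0>0$ large enough that Propositions~\ref{prop:vg->vg0}, \ref{prop:vd->vd0}, \ref{prop:cauchystab(vg)sigma} and \ref{prop:vdsigma->vd0sigma} all apply simultaneously (locally uniformly in $\s_*$ on the compact interval $[\s_0-A,\s_0+A]$), and then to read off continuity of $E$ and of $\partial_\s E$ on $\dot{\Omega}$ by evaluating their defining expressions at $\z=\z_0$.

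To prove continuity of $E$ on $\dot{\Omega}$: on the open set $\Omega$ (where $\eps,\delta>0$) the equation \eqref{eq:Elinzetadelta} is regular, the two selected branches $Y_l,Y_r$ depend continuously on $(\eps,\delta,\s)$ by standard Cauchy theory, hence $E$ is continuous there. At a boundary point $(0,0,\s_*)$ the $\mathcal{C}^1$ convergences $v_l(\eps,\delta,\s,\cdot)\to v_{l0}(\s_*,\cdot)$ on $[\z_\eps,\z_0]$ and $v_r(\eps,\delta,\s,\cdot)\to v_{r0}(\s_*,\cdot)$ on $[\z_0,+\infty[$ yield in particular pointwise convergence at $\z=\z_0$ of both $v$ and $v'$, i.e. $Y_l(\eps,\delta,\s,\z_0)\to Y_{l0}(\s_*,\z_0)$ and $Y_r(\eps,\delta,\s,\z_0)\to Y_{r0}(\s_*,\z_0)$ when $(\eps,\delta,\s)\overset{\omega\times\R}{\to}(0,0,\s_*)$. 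Continuity of the determinant gives $E(\eps,\delta,\s)\to E(0,0,\s_*)$, so the extension is continuous.

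For $\mathcal{C}^1$ regularity in $\s$: on $\Omega$, linear dependence of \eqref{eq:Elinzetadelta} on $\s$ yields $\mathcal{C}^1$ regularity of both branches in $\s$, and differentiating the determinant gives
$$
\partial_\s E(\eps,\delta,\s)=\det\bigl[\partial_\s Y_l,\,Y_r\bigr]+\det\bigl[Y_l,\,\partial_\s Y_r\bigr]
$$
evaluated at $\z=\z_0$. We then define
$$
\partial_\s E(0,0,\s_*):=\det\bigl[\partial_\s Y_{l0}(\s_*,\z_0),\,Y_{r0}(\s_*,\z_0)\bigr]+\det\bigl[Y_{l0}(\s_*,\z_0),\,\partial_\s Y_{r0}(\s_*,\z_0)\bigr].
$$
Propositions~\ref{prop:cauchystab(vg)sigma} and \ref{prop:vdsigma->vd0sigma}, combined with \ref{prop:vg->vg0} and \ref{prop:vd->vd0}, give convergence of each factor at $\z=\z_0$ in the limit $(\eps,\delta,\s)\to(0,0,\s_*)$, hence continuity of $\partial_\s E$ on $\dot{\Omega}$. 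A standard mean value / fundamental theorem of calculus argument along the $\s$-variable (using that $\partial_\s E$ is continuous on $\dot{\Omega}$ and that $E$ is continuous across the critical stratum) then shows that $\partial_\s E$ as defined above is indeed the $\s$-derivative of the extension at $(0,0,\s_*)$.

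The main obstacle is the simultaneous choice of $\z_0$: Proposition~\ref{prop:vg->vg0} allows arbitrary $\z_0>0$ for the left branch, while Propositions~\ref{prop:vd->vd0} and \ref{prop:vdsigma->vd0sigma} require $\z_0$ large, chosen locally uniformly in $\s_*$ (this is ultimately dictated by the threshold $\z_0>(3(m-2)\s)^{(m-2)/m}$ appearing in Lemma~\ref{lem:sursolutionvd}). Since the compact interval $[\s_0-A,\s_0+A]$ is fixed once and for all, a single large $\z_0$ works for every $\s_*$ in this range, and the above argument goes through uniformly. No additional delicate estimate is needed beyond those already established in Section~\ref{section:branches_regularity}.
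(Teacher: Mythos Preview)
Your proposal is correct and follows essentially the same route as the paper: define the extension via the asymptotic branches $Y_{l0},Y_{r0}$, invoke Propositions~\ref{prop:vg->vg0} and \ref{prop:vd->vd0} for continuity, and Propositions~\ref{prop:cauchystab(vg)sigma} and \ref{prop:vdsigma->vd0sigma} for continuity of $\partial_\s E$, with a single $\z_0$ chosen large enough uniformly over the compact $\s$-interval. The only cosmetic difference is that the paper explicitly cites the analyticity Propositions~\ref{prop:vganalytique} and \ref{prop:vdanalytique} to assert that $\s\mapsto E(0,0,\s)$ is smooth along the critical stratum, whereas you obtain the same conclusion by writing out the determinant derivative and appealing to the existence of $\partial_\s Y_{l0},\partial_\s Y_{r0}$ (which is implicit in the statements of Propositions~\ref{prop:cauchystab(vg)sigma} and \ref{prop:vdsigma->vd0sigma}); your mean-value step is then superfluous, since the chain rule applied to the determinant already gives the derivative directly.
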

\begin{proof}
Since we consider bounded $\s\in]\s_0-A,\s_0+A[$ Propositions~\ref{prop:vg->vg0} and \ref{prop:vd->vd0} hold for some $\z_0>0$ large enough but independent of $\eps,\delta,\s$ (let us stress that all the results in Section~\ref{section:branches_regularity} were locally independent of $\s$). The branches $Y_l,Y_r$  thus continuously extend to $Y_{l0},Y_{r0}$ when $(\eps,\delta,\sigma)\overset{\Omega}{\rightarrow}(0,0,\s_*)$, and this obviously extends $E$ to $\dot{\Omega}$ setting
$$
E(0,0,\s):=\det\left(Y_{l0}(\s,\z_0),Y_{r0}(\s,\z_0)\right).
$$
By Propositions~\ref{prop:vganalytique} and \ref{prop:vdanalytique} the mapping $\s\mapsto E(0,0,\s)$ is analytical in $\s$, and Propositions~\ref{prop:cauchystab(vg)sigma} and \ref{prop:vdsigma->vd0sigma} finally imply continuity of the $\s$-derivative with respect to all three arguments $(\eps,\delta,\s)$ at any asymptotic point $(0,0,\s_*)$.
\end{proof}
According to Theorem~\ref{theo:vpprincipale} and by definition of the extension $E(0,0,.)$ we have of course
$$
E(0,0,\s_0)=0;
$$
in order to apply an Implicit Functions Theorem we need to check the usual condition:
\begin{prop}
$E$ satisfies $\frac{\partial E}{\partial \s}(0,0,\s_0)\neq 0$.
\label{prop:s0zerosimple}
\end{prop}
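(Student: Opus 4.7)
The plan is an argument by contradiction using the algebraic simplicity $m_a(\s_0)=1$ established in Proposition~\ref{prop:multalg=1}. Let $v_0$ denote the principal eigenfunction of Theorem~\ref{theo:vpprincipale}, normalized by $v_0(0)=1$. Since the eigenspace at $\s_0$ is one-dimensional (Proposition~\ref{prop:multgeom=1}), we must have $v_{l0}(\s_0,\cdot)=v_0$ and $v_{r0}(\s_0,\cdot)=\beta v_0$ with $\beta:=1/v_0(\z_0)$, and in particular $Y_{r0}(\s_0,\z_0)=\beta Y_{l0}(\s_0,\z_0)$ -- this collinearity being precisely what forces $E(0,0,\s_0)=0$. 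First I would unpack the $\s$-derivative of the determinant \eqref{eq:defEvansdelta}: bilinearity combined with the above relation collapses the four-term expansion to
$$
\frac{\partial E}{\partial\s}(0,0,\s_0)=\det\bigl[Y_{l0}(\s_0,\z_0),\,W(\z_0)\bigr],\qquad W:=\frac{\partial v_{r0}}{\partial\s}\Big|_{\s_0}-\beta\frac{\partial v_{l0}}{\partial\s}\Big|_{\s_0}.
$$

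Next I would differentiate both eigenvalue equations $(L-\s)v_{l0}=0$ and $(L-\s)v_{r0}=0$ in $\s$ and evaluate at $\s_0$, obtaining $(L-\s_0)z_l=v_0$ and $(L-\s_0)z_r=\beta v_0$ with $z_l:=\partial_{\s}v_{l0}|_{\s_0}$ and $z_r:=\partial_{\s}v_{r0}|_{\s_0}$; subtraction yields the homogeneous equation $(L-\s_0)W=0$ on $]0,+\infty[$. Assuming now by contradiction that $\partial_{\s}E(0,0,\s_0)=0$, the vectors $W(\z_0)$ and $Y_{l0}(\s_0,\z_0)$ are collinear, and since both $W$ and $v_0$ solve the same second-order linear homogeneous ODE this collinearity propagates across the whole half-line: $W\equiv\gamma v_0$ for some $\gamma\in\R$.

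Setting $\tilde z:=z_r-\gamma v_0$, my plan is then to exhibit $\tilde z/\beta$ as a genuine generalised eigenvector for $L-\s_0$. From the two equivalent expressions $\tilde z=\beta z_l$ and $\tilde z=z_r-\gamma v_0$ one reads off respectively $\mathcal{C}^1$-regularity at $\z=0$ (via the regularity of $z_l$ from Proposition~\ref{prop:vreguliereens}) and decay at infinity (via the decay of $v_0\in E$ and of $z_r$), so that $\tilde z\in D(L)$. Combined with $(L-\s_0)(\tilde z/\beta)=v_0\neq 0$ this produces a non-trivial element of $\mathrm{Ker}\bigl((L-\s_0)^2\bigr)\setminus\mathrm{Ker}(L-\s_0)$, in direct contradiction with Proposition~\ref{prop:multalg=1}; hence $\partial_{\s}E(0,0,\s_0)\neq 0$.

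The main technical hurdle I anticipate lies in verifying the decay of $z_r=\partial_{\s}v_{r0}|_{\s_0}$ at infinity: the $\mathcal{C}^1_b$-analytical dependence stated in Proposition~\ref{prop:vdanalytique} only yields boundedness, not decay, of the $\s$-derivative. To overcome this I would exploit the explicit power series construction within the proof of that proposition, in which $v_{r0}(\s,\cdot)=\sum_{n\geq 0}(\s-\s_0)^n v_n$ is built via Duhamel from the decaying principal eigenfunction $v_0$, so that each coefficient $v_n$ inherits the same exponential decay at infinity; identifying $z_r=v_1$ on the series then closes the argument. The rest of the scheme is formal and only requires that the Sturm--Liouville structure identified in Section~\ref{section:cadrefonctionnel} be respected when propagating the collinearity of $W$ and $v_0$ from $\z_0$ to the entire half-line.
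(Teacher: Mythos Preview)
Your argument is correct and takes a genuinely different route from the paper's proof. The paper proceeds via the resolvent: it writes $R(\s;L)$ explicitly through a Wronskian/Variation-of-Constants formula as $R(\s;L)=\frac{1}{E(0,0,\s)\z_0^{1/(m-2)}}M(\s)$ with $M(\s)$ an analytic family of operators satisfying $M(\s_0)\neq 0$, and then invokes the functional-calculus fact that the order of the pole of the resolvent at an isolated eigenvalue equals the algebraic multiplicity (here $m_a(\s_0)=1$), forcing $E(0,0,\cdot)$ to have a simple zero at $\s_0$. Your approach is more elementary and constructive: you expand $\partial_\s E$ directly by bilinearity, reduce the vanishing of $\partial_\s E$ to the collinearity of $W=z_r-\beta z_l$ with $v_0$, and then build by hand a generalised eigenvector $\tilde z/\beta\in D(L)$ contradicting Proposition~\ref{prop:multalg=1}. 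Both routes rest on $m_a(\s_0)=1$, but yours bypasses the resolvent machinery and spectral theory entirely in favour of a direct ODE contradiction; the paper's approach, on the other hand, packages the simplicity of the zero and the algebraic multiplicity into a single standard statement, which is cleaner if one is willing to invoke the abstract result. Your identification of the decay of $z_r$ as the only delicate point, and the resolution via the Duhamel construction in the proof of Proposition~\ref{prop:vdanalytique} (each $v_n$ being $v_0$ times a bounded integral), is accurate.
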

\begin{proof}
The argument is directly inspired from \cite{GardnerJones-topoinv}, Lemma~6.2 p.194.
\par
For $\eps=\delta=0$ the first order system corresponding to \eqref{eq:Elinzeta} is
$$
\frac{dY}{d\z}=A(\s,\z)Y,\quad Y=\left(\begin{array}{c}v\\v'\end{array}\right),\quad A(\s,\z)=\left(
\begin{array}{cc}
 0 & 1\\
\frac{b-\s}{\z} & -\frac{1}{(m-2)\z}
\end{array}
\right),
$$
and the Wronskian $W(\s,\z)=\det\left(Y_{l0}(\s,\z),Y_{r0}(\s,\z)\right)$ satisfies
\begin{equation}
\z^{\frac{1}{m-2}}W(\s,\z)=cst=\z_0^{\frac{1}{m-2}}W(\s,\z_0)=\z_0^{\frac{1}{m-2}}E(0,0,\s).
\label{eq:zetaW=zeta0E}
\end{equation}
In the light of Section~\ref{section:cadrefonctionnel} we consider $L=-\z\frac{d^2}{d\z^2}-\frac{1}{m-2}\frac{d}{d\z}+b(\z)$ as an unbounded operator $L:D(L)\subset E\rightarrow E$, of which $\s_0$ is a (principal) eigenvalue. For $\s\neq\s_0$ and $f\in E$ computing the resolvent $R(\s;L)f$ amounts to solving $-\z v''-\frac{v'}{m-2}+(b-\s)v=f$ with boundary conditions $v\in D(L)$ corresponding to $\mathcal{C}^1$ regularity at $\z=0$ and decay $v(+\infty)=0$. Using Variation of Constants and \eqref{eq:zetaW=zeta0E} this unique solution can be computed as
\begin{equation}
v=\frac{1}{\z^{\frac{1}{m-2}}W(\z)}(\alpha v_{l0}+\beta v_{r0}),
\label{eq:resolvent}
\end{equation}
with
$$
\left\{\begin{array}{rcl}
\alpha[f](\z) & := & \displaystyle{
\int_{\z}^{+\infty}t^{\frac{1}{m-2}}\Phi(t)v_{r0}'(t)dt
},\\
\beta[f](\z) & := & \displaystyle{
\int_{0}^{\z}t^{\frac{1}{m-2}}\Phi(t)v_{l0}'(t)dt
},
\end{array}
\right.
\quad\text{and}\quad
\Phi(\z) :=  \frac{1}{\z^{\frac{1}{m-2}}}\displaystyle{
\int_{0}^{\z}t^{\frac{1}{m-2}-1}f(t)dt
}.
$$
According to \eqref{eq:zetaW=zeta0E} and \eqref{eq:resolvent} the resolvent therefore reads
\begin{equation}
 \forall f\in E,\qquad R(\s;L)f=v=\frac{1}{E(0,0,\s)\z_0^{\frac{1}{m-2}}}M(\s)f,
\label{eq:defresolvente}
\end{equation}
where $M(\s)f=(\alpha v_{l0}+\beta v_{r0})$ and $\alpha[f],\beta[f]$ are defined above. By Propositions~\ref{prop:vganalytique} and \ref{prop:vdanalytique} the mappings $\s\mapsto v_{l0}(\s,.)$ and $\s\mapsto v_{r0}(\s,.)$ are analytical, and the family of operators $M(\s)$ is therefore analytical. We constructed $\s_0$ so that the principal eigenfunction $v_0=v_{l0}(\s_0,.)=v_{r0}(\s_0,.)\in D(L)$ (up to some proportionality factors): it is then the easy to compute $M(\s_0)v_0\neq 0$, and $M(\s_0)$ is consequently non-trivial.
\par
By Proposition~\ref{prop:s0vpisolee} the principal eigenvalue is isolated in $\mathbb{C}$, and the resolvent $R(\s;L)$ is thus classically meromorphic in some complex neighbourhood of $\s_0$. Since $E(0,0,\s)$ and $M(\s)$ are analytical and $M(\s_0)\neq 0$, \eqref{eq:defresolvente} shows that the order of the pole $\s=\s_0$ in $R(\s;L)$ equals the order of the zero $E(0,0,\s_0)$: by classical functional calculus arguments the order of the pole in the resolvent is exactly the algebraic multiplicity (see e.g. \cite{TaylorLay-functanal}), which is $m_a(\s_0)=1$ by Proposition~\ref{prop:multalg=1}.
\end{proof}
We may consequently apply the following Implicit Functions Theorem:
\begin{theo}
Let $\omega$ be as in \eqref{eq:regimefreqdelta}: if $\delta_0>0$ is chosen small enough there exist $B\in]0,A[$ and a function $\overline{\s}(\eps,\delta)$ defined on $\omega$ such that
\begin{enumerate}
\item 
$
\left\{\begin{array}{c}
(\eps,\delta,\s)\in\omega\times ]\s_0-B,\s_0+B[\\
E(\eps,\delta,\s)=0
       \end{array}
\right.
\quad
\Leftrightarrow
\quad
\left\{\begin{array}{c}
(\eps,\delta)\in\omega\\
\s=\overline{\s}(\eps,\delta)
       \end{array}
\right.
$
\item
$\overline{\s}$ is $\mathcal{C}^1$ on $\omega$ and $\overline{\s}(\eps,\delta)\rightarrow \s_0$ when $(\eps,\delta)\rightarrow (0,0)$ in $\omega$.
\end{enumerate}
\label{theo:TFI}
\end{theo}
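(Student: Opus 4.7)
The plan is to apply a parametric implicit function theorem to the Evans function $E$ at the singular boundary point $(0,0,\s_0)\in\dot\Omega$. The key ingredients are already in hand: by Proposition~\ref{prop:prolongementEvans}, $E$ extends continuously to $\dot\Omega$ with continuous $\partial_\s E$; by construction $E(0,0,\s_0)=0$; and by Proposition~\ref{prop:s0zerosimple}, $\partial_\s E(0,0,\s_0)\neq 0$. The subtlety is that $E$ is not a priori $\mathcal{C}^1$ jointly in $(\eps,\delta)$ at the boundary point, so the argument splits naturally into a Dini-type step at $(0,0)$ and a classical $\mathcal{C}^1$ IFT step on the open set $\omega$.

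First I would establish existence and uniqueness of a continuous $\overline{\s}$. By continuity of $\partial_\s E$ on $\dot\Omega$ and its nonvanishing at $(0,0,\s_0)$, I may fix $B\in]0,A[$ and a relative neighbourhood $W$ of $(0,0)$ in $\dot\omega$ such that $|\partial_\s E|$ is bounded below by a positive constant on $W\times[\s_0-B,\s_0+B]$. Together with $E(0,0,\s_0)=0$, the strict monotonicity of $\s\mapsto E(\eps,\delta,\s)$, and the continuity of $E$, a Dini-type implicit function theorem (or, equivalently, a direct intermediate value argument applied for each fixed $(\eps,\delta)$) produces a unique continuous function $\overline{\s}:W\to]\s_0-B,\s_0+B[$ with $E(\eps,\delta,\overline{\s}(\eps,\delta))\equiv 0$ and $\overline{\s}(0,0)=\s_0$. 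Shrinking $\delta_0>0$ in the definition of $\omega$ so that $\omega\subset W$, this yields claim $(1)$ together with the limit $\overline{\s}(\eps,\delta)\to\s_0$.

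Next I would promote this continuous $\overline{\s}$ to $\mathcal{C}^1$ on the open set $\omega$. For $\eps,\delta>0$ the scaled wave profile $q_{\eps,\delta}$ depends smoothly on $(\eps,\delta)$ (this follows from the ODE structure behind Proposition~\ref{prop:peps->p0}) and the equation \eqref{eq:Elinzetadelta} is non-singular; moreover the initial data defining $v_l$ (from the cold-zone expansion at $\xi_\eps$) and the normalization $v_r(\z_0)=1$ of the stable branch depend smoothly on $(\eps,\delta,\s)$. Classical parameter-dependence results for linear ODEs then give $v_l,v_r\in\mathcal{C}^1$ jointly in $(\eps,\delta,\s)$, whence $E\in\mathcal{C}^1(\omega\times]\s_0-B,\s_0+B[)$. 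Since $\partial_\s E$ remains nonzero on this set (by continuity it stays bounded away from $0$ after possibly further shrinking $\delta_0$ and $B$), the standard $\mathcal{C}^1$ implicit function theorem applies pointwise on $\omega$, and by the uniqueness obtained in step one the resulting local $\mathcal{C}^1$ implicit functions must coincide with $\overline{\s}$, which is therefore $\mathcal{C}^1$ on all of $\omega$.

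The conceptually delicate ingredients, namely the continuous extension of the Evans function up to the boundary of the parameter domain and the non-degeneracy of its $\s$-derivative, have already been treated in Propositions~\ref{prop:prolongementEvans} and \ref{prop:s0zerosimple}; what remains is essentially organizational, the only genuine care being the use of a continuous-IFT variant at the singular boundary point $(0,0,\s_0)$ rather than the fully $\mathcal{C}^1$ version.
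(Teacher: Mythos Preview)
Your proposal is correct and follows the same strategy the paper sketches: both recognize that the classical $\mathcal{C}^1$ implicit function theorem is unavailable at the boundary point $(0,0,\s_0)$ and instead combine the continuous extension of $E$ and $\partial_\s E$ (Proposition~\ref{prop:prolongementEvans}) with the non-degeneracy $\partial_\s E(0,0,\s_0)\neq 0$ (Proposition~\ref{prop:s0zerosimple}) to run a variant of the IFT argument. The paper merely alludes to ``the usual iterative fixed point'' without detail, whereas you make explicit the two-step structure (a Dini-type continuous IFT at the boundary, then the standard $\mathcal{C}^1$ IFT on the open interior $\omega$); this is a perfectly acceptable and arguably cleaner way to organize the same idea.
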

Note that, due to the algebraic form \eqref{eq:regimefreqdelta} of $\omega$, taking $\delta<\delta_0$ small enough is just a vicinity condition $(\eps,\delta)\sim (0,0)$ in $\omega$, that is to say ``$(\eps,k)\in\mathcal{U}$ and $k>k_0$ large enough'' as stated in Theorem~\ref{theo:reldisp}.
\begin{proof}
This is a non-standard version of the Implicit Functions Theorem since $E(\eps,\delta,\s)$ is not $\mathcal{C}^1$ in some open set, which $\dot{\Omega}$ is not. The argument is based on the usual iterative fixed point and strongly relies on Proposition~\ref{prop:s0zerosimple}, but will not be detailed here. 
\end{proof}
%
%
\subsection{Proof of Theorem \ref{theo:reldisp}}
\label{subsection:preuvetheoreldisp}
We just defined the function $\overline{\s}(\eps,\delta)$ on $\omega$ so that $E(\eps,\delta,\overline{\s}(\eps,\delta))=0$, or in other words so that the connection $v(\pm\infty)=0$ holds in \eqref{eq:Elinzetadelta} if $\s=\o{\s}(\eps,\delta)$. We denote below by $v_{\eps,\delta}(\z)$ the corresponding eigenfunction and by $u_{\eps,k}(x)$ its alter-ego in $x$ coordinates, which is a solution of \eqref{eq:Elinx} for $\eps>0$ and $k<\infty$. Our previous scaling $\s=\frac{s}{(m-2)c_{\eps}k^{1-\frac{1}{m-1}}}$ and $\delta=\frac{1}{k^{1-\frac{1}{m-1}}}$ corresponds now to
$$
s(\eps,k):=(m-2)c_{\eps}\overline{\s}\left(\eps,\frac{1}{k^{1-\frac{1}{m-1}}}\right)k^{1-\frac{1}{m-1}},
$$
which is of course the principal eigenvalue in Theorem~\ref{theo:reldisp}. This immediately yields the desired asymptotic relation $s \underset{\eps,k}{\sim}\gamma_0k^{1-\frac{1}{m-1}}$ with $\gamma_0:=(m-2)c_0\s_0$.
%
\par
For $\s=\overline{\s}(\eps,\delta)\sim\s_0>0$ the asymptotic equation of \eqref{eq:Elinzetadelta} at $+\infty$ leads to an unstable characteristic exponent $r_+>0$ and the  stable subspace $v(+\infty)=0$ has therefore dimension $1$. This shows that the eigenspace for $s=s(\eps,k)$ is $1$-dimensional as stated in Theorem \ref{theo:reldisp}. We may have equally argued that the space of maximal decay solutions at $-\infty$ is also $1$-dimensional for $s=s(\eps,k)\sim \gamma_0k^{1-\frac{1}{m-1}}$, see Section~\ref{section:zonefroide} and in particular the discussion about $B_w$ spaces page \pageref{page:discussion_Bw}.
\par
We prove below the claimed positivity $u_{\eps,k}(x)>0\Leftrightarrow v_{\eps,\delta}(\z)>0$ separately on three intervals: in $\z$ coordinates on $[\z_{\eps},\z_0]$ (for some $\z_0$ to be chosen later), again in $\z$ coordinates on $[\z_0,+\infty[$, and finally in $x$ coordinates on $]-\infty,x_{\eps}]$.
\\
\par
\fbox{$\z\in[\z_{\eps},\z_0]$}\quad
Since $\overline{\s}(\eps,\delta)\sim \s_0$ Proposition~\ref{prop:vg->vg0} guarantees that $v_{\eps,\delta}(\z)$ is close to the asymptotic principal eigenfunction $v_{l0}(\s_0,\z)$ uniformly on $[\z_{\eps},\z_0]$ for fixed $\z_0>0$ when $(\eps,\delta)\to(0,0)$ in the double limit. By construction we had $v_{l0}(\s_0,\z)>0$ on $\R^{+}$, thus $v_{\eps,\delta}(\z)>0$ on $[\z_{\eps},\z_0]$.
\par
\fbox{$\z\in[\z_0,+\infty]$}\quad
For $\s=\overline{\s}(\eps,\delta)$ equation \eqref{eq:Elinzetadelta} is of the form $L\left[v_{\eps,\delta}\right]=0$, where $L$ is uniformly elliptic. As already discussed the zero-th order coefficient $a_0(\z)$ given by \eqref{eq:ordre0>0} can moreover be made positive on $[\z_0,+\infty[$ if $\z_0>0$ is large enough: we conclude applying the classical Minimum Principle with $v_{\eps,\delta}(\z_0)>0$ and $v_{\eps,\delta}(+\infty)=0$.
\par
\fbox{$x\in]-\infty,x_{\eps}]$}\quad
Here the classical Minimum Principle is unfortunately not available at once since the zero-th order coefficient in $Lv=0$ is not positive, see again \eqref{eq:ordre0>0} with $q(-\infty)=\eps/\delta\ll 1$. In order to keep our notations light we omit below the subscripts and one should understand in the following $v(\z)=v_{\eps,\delta}(\z)$, $u(x)=u_{\eps,k}(x)$, $p=p_{\eps}(x)$, $c=c_{\eps}$ and $s=s(\eps,k)$. The asymptotic expansion in the cold zone $v=v_0+\eps v_1+\eps v_2$ reads in $x$ coordinates on $]-\infty,x_{\eps}]$
$$
u=u_0+\eps u_1+\eps u_2,\qquad u_0=p'/p'(x_{\eps})>0,
$$
and the leading order $u_0$ is positive. According to \eqref{eq:estimationepsv1epsv2} the next orders $\eps u_1,\eps u_2$ are small on $]-\infty,x_{\eps}]$,
\begin{equation}
s\eps^{1-a}=o(1),\qquad \left\{
\begin{array}{ccl}
||\eps u_1||_{L^{\infty}} \leq C ||\eps u_1||_{B_{w,0}^2} & \leq & C_1s\eps^{1-a}\\
||\eps u_2||_{L^{\infty}} \leq C ||\eps u_2||_{B_{w,0}^2} & \leq & C_2\left(s\eps^{1-a}\right)^2
\end{array}
\right. ,
\label{eq:u>0estimationvu1u2}
\end{equation}
and $u_0>0$ should suffice to ensure positivity of the whole expansion at least for times $x\lesssim x_{\eps}$. The asymptotic study of \eqref{eq:Elinx} at $-\infty$ however shows that
$$
u\underset{-\infty}{\sim} e^{r^+x},\qquad 0<r^+=\frac{c_{\eps}+\sqrt{c_{\eps}^2+4\eps\left(k^2\eps^{\frac{m}{m-2}}-s\right)}}{2\eps}<\frac{c_{\eps}}{\eps},
$$
whereas the planar wave decays as $p'(x)\underset{-\infty}{\propto} e^{\frac{c_{\eps}}{\eps}x}\ll e^{r^+x}$, and the lower order terms $\eps u_1+\eps u_2$ unfortunately dominate $u_0$ when $x\to-\infty$. We prove below that $\eps u_1+\eps u_2$ stays negligible on some interval $[x_1,x_{\eps}]$, and use then a suitable comparison principle on $]-\infty,x_1]$.
\begin{enumerate}
 \item 
Fix $A>C_1$ with $C_1>0$ as in \eqref{eq:u>0estimationvu1u2}: monotonicity $u_0'=p''/p'(x_{\eps})>0$ and $s\eps^{1-a}\ll1$ define a unique time $x_1\in]-\infty,x_{\eps}[$ such that
\begin{equation}
u_0(x_1)=As\eps^{1-a},
\label{eq:u>0defx0}
\end{equation}
and estimate \eqref{eq:u>0estimationvu1u2} immediately implies
$$
x\in[x_1,x_{\eps}]:\qquad u\geq\underbrace{u_0}_{\geq As\eps^{1-a}}-\underbrace{||\eps u_1||_{\infty}}_{\leq C_1 s\eps^{1-a}}-\underbrace{||\eps u_2||_{\infty}}_{\leq C_2 (s\eps^{1-a})^2}>0.
$$
\item
Let $r_0:=\frac{c_0}{2\eps}>0$ and $\Phi(x):=e^{r_0x}$: using definition \eqref{eq:u>0defx0} of $x_1$ and asymptotes \eqref{eq:p'}-\eqref{eq:p''} we see that $p\sim\eps$, $p'=o(1)$ and $p''\leq C s\eps^{-a}$ hold on $]-\infty,x_1]$ and hence
\begin{eqnarray*}
L[\Phi] & = & \left[-pr_0^2+\left(c-\frac{2p'}{m-2}\right)r_0+\left(k^2p^{\frac{m}{m-2}}-s-p''\right)\right]e^{r_0x}\\
   & \geq & C\left[-\eps r_0^2+c_0 r_0-s\eps^{-a}\right]e^{r_0x}\\
 & \geq & C\left[\frac{c_0^2}{4\eps}-s\eps^{-a}\right]e^{r_0x}>0 \hspace{1cm}(s\eps^{1-a}\ll 1).
\end{eqnarray*}
The function $w:=u/\Phi$ satisfies an elliptic equation $\tilde{L}w=:Lu=0$, where $\tilde{L}$ has thus positive zero-th order coefficient $\tilde{a}_0$=$L[\Phi]>0$. Since $u\underset{-\infty}{\sim}e^{r^+x}$ decays faster that $\Phi=e^{r_0x}$ ($r^+\sim c_0/\eps>r_0$) we have $w(-\infty)=0$, and also $w(x_1)=u(x_1)/\Phi(x_1)>0$: we conclude applying the classical Minimum Principle on $]-\infty,x_1]$.
\end{enumerate}
\par
In order to fully establish Theorem~\ref{theo:reldisp} we only have left to prove that
\begin{enumerate}[(i)]
\item $s(\eps,k)$ is the smallest eigenvalue (not necessarily principal),
\item there exists no other principal eigenvalue.
\end{enumerate}

The proof of (i) is very classical and relies on the positivity of the principal eigenfunction $u$ and the fact that, the smaller $s$, the faster a potential eigenfunction must decay on both sides (this can be seen looking at the characteristic equations at $x=\pm\infty$). Assume by contradiction that there exists a non-trivial eigenfunction $\underline{u}$  associated with some $\underline{s}<s(\eps,k)$: then $w:=\underline{u}/u$ is well-defined, satisfies an elliptic equation $\tilde{L}w=0$ with zero-th order coefficient $(s-\underline{s})u>0$, and decays on both sides $w(\pm\infty)=0$. The classical Maximum/Minimum Principles finally imply $w\equiv 0$.
\par
The proof of (ii) relies on (i): if there existed an other principal eigenvalue $\o{s}>s(\eps,k)$ associated with a positive eigenfunction $\o{u}$ we could apply (i) with $\o{s}$ instead of $s$ and conclude that $\o{s}$ is the smallest eigenvalue.

%
%
%
 \bibliographystyle{siam}
 \bibliography{./biblio}

\begin{thebibliography}{10}

\bibitem{GardnerJones-topoinv}
{\sc J.~Alexander, R.~Gardner, and C.~Jones}, {\em A topological invariant
  arising in the stability analysis of travelling waves}, J. Reine Angew.
  Math., 410 (1990), pp.~167--212.

\bibitem{BeresLarou-qquesaspects}
{\sc H.~Berestycki and B.~Larrouturou}, {\em Quelques aspects math{\'e}matiques
  de la propagation des flammes pr{\'e}m{\'e}lang{\'e}es}, in Nonlinear partial
  differential equations and their applications. {C}oll{\`e}ge de {F}rance
  {S}eminar, {V}ol.\ {X} ({P}aris, 1987--1988), vol.~220 of Pitman Res. Notes
  Math. Ser., Longman Sci. Tech., 1991, pp.~65--129.

\bibitem{BeresNiren-sliding}
{\sc H.~Berestycki and L.~Nirenberg}, {\em On the method of moving planes and
  the sliding method}, Bulletin of the Brazilian Mathematical Society, 22
  (1991), pp.~1--37.

\bibitem{BerettaHulshofPeletier-ODEcoatingFlow}
{\sc E.~Beretta, J.~Hulshof, and L.~A. Peletier}, {\em On an {ODE} from forced
  coating flow}, J. Differential Equations, 130 (1996), pp.~247--265.

\bibitem{CaffarelliVazquezWolanski-lipschitzPME}
{\sc L.~A. Caffarelli, J.~L. V{\'a}zquez, and N.~I. Wolanski}, {\em Lipschitz
  continuity of solutions and interfaces of the {$N$}-dimensional porous medium
  equation}, Indiana Univ. Math. J., 36 (1987), pp.~373--401.

\bibitem{ClavinMasseRoque-relaxation}
{\sc P.~Clavin, L.~Masse, and J.-M. Roquejoffre}, {\em Relaxation to
  equilibrium in diffusive-thermal models with a strongly varying
  length-scale}, Communications in Mathematical Sciences, 9 (2011),
  pp.~127--141.

\bibitem{CodLev-ODE}
{\sc E.~A. Coddington and N.~Levinson}, {\em Theory of ordinary differential
  equations}, McGraw-Hill Book Company, Inc., New York-Toronto-London, 1955.

\bibitem{ClavinMasse-instability}
{\sc L.~Masse and P.~Clavin}, {\em Instabilities of ablation fronts in inertial
  confinement fusion: A comparison with flames}, Physics of Plasmas, 11 (2004).

\bibitem{RoquejoffreMelletNolenRyzhik-stabilityTW}
{\sc A.~Mellet, J.~Nolen, J.-M. Roquejoffre, and L.~Ryzhik}, {\em Stability of
  generalized transition fronts}, Comm. Partial Differential Equations, 34
  (2009), pp.~521--552.

\bibitem{Roquejoffre-EventualMonotonicityTW}
{\sc J.-M. Roquejoffre}, {\em Eventual monotonicity and convergence to
  travelling fronts for the solutions of parabolic equations in cylinders},
  Ann. Inst. H. Poincar{\'e} Anal. Non Lin{\'e}aire, 14 (1997), pp.~499--552.

\bibitem{SanzClavinMasse-LinearDLRTinstab}
{\sc J.~Sanz, L.~Masse, and P.~Clavin}, {\em The linear darrieus-landau and
  rayleigh-taylor instabilities in inertial confinement fusion revisited},
  Physics of plasmas, 13 (2006), p.~102702.

\bibitem{TaylorLay-functanal}
{\sc A.~E. Taylor and D.~C. Lay}, {\em Introduction to functional analysis},
  John Wiley \& Sons, New York-Chichester-Brisbane, second~ed., 1980.

\end{thebibliography}
\end{document}